\newtheorem{defn}{Definition}
\newtheorem{thm}[defn]{Theorem}
\newtheorem{cor}[defn]{Corollary}
\newtheorem{lem}[defn]{Lemma}
\newtheorem{prop}[defn]{Proposition}
\theoremstyle{remark}
\newtheorem{rem}[defn]{Remark}
\theoremstyle{remark}
\newtheorem{exam}[defn]{Example}
\numberwithin{equation}{section} \numberwithin{defn}{section}
\newcommand\aut{\operatorname{Aut}}
\newcommand\ed{\operatorname{End}}
\newcommand\gr{\operatorname{Gr}}
\newcommand\gl{\operatorname{Gl}}
\newcommand\End{\operatorname{End}}
\renewcommand\det{\operatorname{Det}}
\newcommand\Det{\operatorname{det}}
\newcommand\tr{\operatorname{tr}}
\newcommand\res{\operatorname{Res}}
\newcommand\id{\operatorname{I}}
\newcommand\limpl[1]{\underset{#1}\varprojlim\,}
\def\mod #1/#2{\kern.06em{\raise1.2pt\hbox{$#1$}}/
      {\raise-1.2pt\hbox{$#2$}}}
\renewcommand\O{{\mathcal O}}
\newcommand\B{{\mathcal B}}
\renewcommand\tilde{\widetilde}
\renewcommand\lim{\limpl{A\in\B}}
\newcommand\beq{
      \setcounter{equation}{\value{defn}}\addtocounter{defn}1
      \begin{equation}}
\begin{document}

\title[Determinants, symbols and reciprocity laws]{Determinants of finite potent endomorphisms, symbols and reciprocity laws}
\author{Daniel Hern\'andez Serrano \\ Fernando Pablos Romo (*)}
\address{Departamento de
Matem\'aticas, Universidad de Salamanca, Plaza de la Merced 1-4,
37008 Salamanca, Espa\~na}
\address{IUFFYM. Instituto Universitario de F\'{\i}sica Fundamental y Matem\'aticas, Universidad de Salamanca, Plaza de la Merced s/n\\ 37008 Salamanca. Spain.}
\email{fpablos@usal.es}\email{dani@usal.es}
\keywords{Infinite Determinant, Finite Potent Endomorphism,
Hilbert Space, Central Extension of Groups}
\thanks{2010 Mathematics Subject Classification: 15A15, 65F40, 47B07.
\\(*) Corresponding author\\ This work is partially supported by the
DGESYC research contract no. MTM2009-11393, MTM2012-32342 and Castilla y Le\'on
regional government contract SA112A07.}
\maketitle

\begin{abstract} The aim of this paper is to offer an algebraic definition of
infinite determinants of finite potent endomorphisms using linear algebra techniques. It generalizes Grothendieck's determinant for finite rank endomorphisms and is equivalent to the classic analytic definitions.  The theory can be interpreted as a multiplicative analogue to Tate's formalism of abstract residues in terms of traces of finite potent linear operators on infinite-dimensional vector spaces, and allows us to relate Tate's theory to the Segal-Wilson pairing in the context of loop groups.
\end{abstract}

\bigskip

\setcounter{tocdepth}1

\tableofcontents
\bigskip

\section{Introduction}

Infinite determinants have been studied since the second half of the last century. In 1956,  Grothendieck \cite{Gr} developed an algebraic method to compute $\Det (1+u)$, where $u$ is a finite rank endomorphism on an infinite-dimensional vector space. When $u$ is not of  finite rank, there exist different approaches to define $\Det (1+u)$, as far as we know all of them from an analytic point of view. Let us comment on some of them. 

In 1963, in \cite{DS} Dunford and Schwartz defined the infinite determinant $\Det (1+B)$ in terms of the nonzero eigenvalues of a trace class operator $B$ on a separable Hilbert space; also for trace class operators on Hilbert spaces, in \cite{Si} Simon defined (1976) the infinite determinant by: 
$$\Det_{1}(1+B)=1+\sum_{r=1}^{\infty}\tr(\bigwedge^r B)$$
and showed that it satisfies the expected properties of a determinant. In 2001, in \cite{GGK} Gohberg, Goldberg and Krupnik offered a generalization of determinants for trace potent operators on a separable complex Hilbert space. 

In these notes we offer an algebraic construction of infinite determinants of finite potent endomorphims using linear algebra techniques. It generalizes Grothendieck's determinant for finite rank endomorphisms (over infinite dimensional vector spaces). We show that there exists an equivalence between this algebraic definition and the classical analytic definitions commented above. The key point is to use Tate's definition of traces of finite potent endomorphims (\cite{Ta}).

The algebraic construction of the determinant allows us to construct a central extension of groups whose associated cocycle can be interpreted as a multiplicative analogue of Tate's abstract residue. Moreover, this cocycle shows a way to relate the theory of Segal and Wilson for loops groups (\cite{SW}) and Tate's theory of abstract residues (\cite{Ta}). Finally, a reciprocity law for this cocycle is stated in geometric terms and can be thought of as a multiplicative analogue of Tate's theorem of residues. One could say that this self-contained theory could be thought of as an approach to a unified theory of local symbols (in characteristic zero).

The paper is organized as follows. In section \ref{s:pre} we briefly recall Tate's definition of the trace of a finite potent endomorphism and its  properties. 

Section \ref{s:det} is devoted to giving the algebraic construction of the infinite determinant of finite potent endomorphisms, to showing its basic properties and its relationship to the classical definitions in terms of the exterior algebra and the eigenvalues of the endomorphism.

 Section \ref{s:exp} aims to give a definition of an exponential map for finite potent endomorphims (over a field of characteristic zero), and to show the relationship between  the infinite determinant (defined in the last section) and Tate's trace via this exponential. We also include a subsection meant to extend the definition of the infinite determinant of a finite potent endomorphism to the case in which one has an infinite product of finite potent endomorphisms.

Finally, in section \ref{s:central} we prove the existence of a central extension of groups and explicitly compute its associated cocycle. The proof shows that this cocycle is a multiplicative analogue of Tate's abstract residue and offers us a way to see the equivalence between Segal and Wilson's theory of determinants for loop groups and Tate's theory.  The properties of this cocycle (deduced directly from those of Tate's residue) allow us to state a reciprocity law in geometric terms, that is no more than a multiplicative version of the theorem of residues given by Tate.

In \ref{ss:det&Hilb} we show the equivalence between the algebraic construction given here and the classical analytic definitions.

The last appendix, \ref{ss:SW}, is a quick overview of the theory of Segal and Wilson of determinants in the context of loop groups.

\section{Preliminaries}\label{s:pre}


 Let $k$ be an arbitrary field, let $V$ be a $k$-vector space and let $\varphi$ be an endomorphism  of $V$. 
 
 \begin{defn}
 We say that $\varphi$ is \emph{finite potent} if $\varphi^n V$ is finite
dimensional for some $n$. 
\end{defn}
If $\varphi$ is finite potent, a trace $\tr_V(\varphi) \in k$ may
be defined (see \cite{Ta}), having the following properties:
\begin{enumerate}
\item If $V$ is finite dimensional, then $\tr_V(\varphi)$ is the
ordinary trace. \item If $W$ is a subspace of $V$ such that
$\varphi W \subset W$, then: $$\tr_V(\varphi) = \tr_W(\varphi) +
\tr_{V/W}(\varphi)\, .$$ \item If $\varphi$ is nilpotent, then
$\tr_V(\varphi) = 0$. \item If $F$ is a ``finite potent'' subspace
of $\ed (V)$ (i.e., if there exists an $n$ such that for any
family of $n$ elements $\varphi_1,\dots ,\varphi_n \in F$, the
space $\varphi_1\dots \varphi_n V$ is finite dimensional) then
$\tr_V\colon F\longrightarrow k$ is $k$-linear. \item If $f\colon
V'\to V$ and $g\colon V\to V'$ are $k$-linear and $f g$ is finite
potent, then $g f$ is finite potent, and we have: $$\tr_V (f g) = \tr_{V'}
(g f)\, .$$
\end{enumerate}

Properties (1), (2) and (3)
characterize traces, because if $W$ is a finite dimensional
subspace of $V$ such that $\varphi W \subset W$ and $\varphi^n V
\subset W$, for some $n$, then $\tr_V(\varphi) =
\tr_W(\varphi_{\vert_W})$. And, since $\varphi$ is finite potent,
we may take $W = \varphi^n V$ (for details see \cite{Ta}).

\begin{rem}\label{r:AST} M. Argerami, F. Szechtman and R. Tifenbach have
recently shown in \cite{AST} that an endomorphism $\varphi$ is
finite potent if and only if $V$ admits a $\varphi$-invariant
decomposition $V = W_\varphi \oplus U_\varphi$ such that
$\varphi_{\vert_{U_\varphi}}$ is nilpotent, $W_\varphi$ is finite
dimensional, and $\varphi_{\vert_{W_\varphi}} \colon W_\varphi
\overset \sim \longrightarrow W_\varphi$ is an isomorphism. This
decomposition is unique and one again has that $\tr_V(\varphi) =
\tr_W(\varphi_{\vert_{W_\varphi}})$.

In this paper we shall call this decomposition the
$\varphi$-invariant AST-decomposition of $V$.\qed
\end{rem}

Let $M, N$ be two $k$-vector subspaces of $V$.
\begin{defn}\label{d:comm0} $M$ and $N$ are said to be
commensurable if $M + N /M\cap N$ is a finite dimensional vector
space over $k$. We write $M\sim N$ to denote commensurable
subspaces (\cite{Ta}).
\end{defn}
    If we set $M<N$ when $(M+N)/N$ is finite dimensional, or equivalently, if $M\subset N+W$ for some finite dimensional $W$, it is clear that $M\sim N$ if and only if $M<N$ and $N<M$. Commensurability is an equivalent
relation on the set of $k$-vector subspaces of $V$.

Let us fix a vector subspace $V_+ \subset V$  and let us define subspaces $E$, $E_{0}$, $E_{1}$, $E_{2}$ of $\End_{k}(V)$ by:
 \begin{equation}\label{e:E}
 \begin{aligned}
\varphi \in E &\iff \varphi (V_{+})<V_{+}\\
\varphi \in E_{1} &\iff \varphi (V)<V_{+}\\
\varphi \in E_{2} &\iff \varphi (V_{+})<(0)\\
\varphi \in E_{0} &\iff \varphi (V)<V_{+} \, \mbox{ and }\, \varphi(V_{+})<(0)
\end{aligned}
\end{equation}
\begin{prop}\label{p:Tate1}\cite{Ta}[Prop.1] $E$ is a $k$-subalgebra of $\End_{k}(V)$; the $E_{i}$ are two-sided ideals in $E$; the $E$'s depend only on the $\sim$-equivalence class of $V_{+}$; we have $E_{1}\cap E_{2}=E_{0}$ and $E_{1}+E_{2}=E$; $E_{0}$ is finite potent.
\end{prop}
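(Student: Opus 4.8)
The plan is to reduce the whole statement to a short list of purely formal properties of the relation $<$ and then read off the five assertions. First I would record the following, all immediate from the definitions: (i) $<$ is monotone in each argument, i.e.\ $M\subseteq M'$ and $M'<N$ imply $M<N$, and $M<N$ and $N\subseteq N'$ imply $M<N'$; (ii) $M_1<N$ and $M_2<N$ imply $M_1+M_2<N$; (iii) $<$ is transitive (to see that $M<N$ and $N<P$ imply $M<P$, note that $(M+N+P)/(N+P)$ is a quotient of $(M+N)/N$ while $(N+P)/P$ is finite dimensional, so that $(M+P)/P\subseteq(M+N+P)/P$ is finite dimensional); (iv) for every $\varphi\in\End_k(V)$, $M<N$ implies $\varphi(M)<\varphi(N)$, since $M\to\varphi(M)/(\varphi(M)\cap\varphi(N))$ is surjective and annihilates $M\cap N$; and (v) any finite-dimensional subspace is $<$-smaller than every subspace.

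Granting (i)--(v), the algebra structure of $E$ is purely formal: $1\in E$ and stability under scalars are clear; stability under sums follows from $(\varphi+\psi)(V_+)\subseteq\varphi(V_+)+\psi(V_+)$ with (i)--(ii); and stability under composition follows from $\varphi(\psi(V_+))<\varphi(V_+)<V_+$ via (iv) and (iii). The same inputs give $E_1,E_2\subseteq E$ (using (v) for $E_2$) and the absorption properties: for $\varphi\in E$ and $\psi\in E_1$ one has $\varphi\psi(V)<\varphi(V_+)<V_+$ and $\psi\varphi(V)\subseteq\psi(V)<V_+$; for $\psi\in E_2$ one has $\psi\varphi(V_+)<\psi(V_+)$, which is finite dimensional, while $\varphi\psi(V_+)$ is the $\varphi$-image of the finite-dimensional space $\psi(V_+)$. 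Hence $E_1$ and $E_2$ are two-sided ideals of $E$.

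The identity $E_1\cap E_2=E_0$ is the definition. For $E_1+E_2=E$, the inclusion $\subseteq$ holds because $E$ is additively closed; conversely, given $\varphi\in E$ I would fix a linear complement $W$ of $V_+$ in $V$, with associated projections $\pi_{V_+},\pi_W\colon V\to V$, and decompose $\varphi=\varphi\pi_{V_+}+\varphi\pi_W$. Here $\varphi\pi_{V_+}(V)=\varphi(V_+)<V_+$, so $\varphi\pi_{V_+}\in E_1$; and $\varphi\pi_W(V_+)=(0)$, so $\varphi\pi_W\in E_2$. Therefore $E_0=E_1\cap E_2$ is a two-sided ideal as an intersection of ideals. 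Choosing this complement is the one genuinely non-formal step; everything else is bookkeeping with (i)--(v), and the only real care needed is to apply transitivity (iii) and push-forward (iv) in the correct direction, since $<$ is not symmetric.

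It remains to check the $\sim$-invariance and the finite potence of $E_0$. If $V_+\sim V_+'$, then by (iii) one has $M<V_+\iff M<V_+'$ for all $M$, so the condition defining $E_1$ is unchanged; writing $V_+=(V_+\cap V_+')+F$ and $V_+'=(V_+\cap V_+')+F'$ with $F,F'$ finite dimensional shows that $\varphi(V_+)$, $\varphi(V_+\cap V_+')$ and $\varphi(V_+')$ are pairwise commensurable, so the conditions defining $E$ and $E_2$ are unchanged as well (alternatively, invoke $E=E_1+E_2$ and $E_0=E_1\cap E_2$, already shown to be $\sim$-invariant). Finally, I claim $n=2$ witnesses that $E_0$ is finite potent: for $\varphi_1,\varphi_2\in E_0$, since $\varphi_2(V)<V_+$ we may write $\varphi_2(V)\subseteq V_++F$ with $F$ finite dimensional, and then $\varphi_1\varphi_2(V)\subseteq\varphi_1(V_+)+\varphi_1(F)$ is finite dimensional because $\varphi_1(V_+)$ is ($\varphi_1\in E_2$) and $\varphi_1(F)$ is. I do not anticipate any serious obstacle beyond this bookkeeping.
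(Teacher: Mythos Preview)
Your proof is correct. Note, however, that the paper does not give its own proof of this proposition: it is quoted verbatim from Tate's original article (\cite{Ta}, Proposition~1) and stated without argument in the preliminaries. Your write-up essentially reconstructs Tate's proof, organizing it cleanly around the formal properties (i)--(v) of the relation~$<$; the decomposition $\varphi=\varphi\pi_{V_+}+\varphi\pi_W$ for $E=E_1+E_2$ and the observation that any product of two elements of $E_0$ has finite rank are exactly Tate's arguments.
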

Thus there is a $k$-linear map $\tr_{V}\colon E_{0}\to k$.
\begin{prop}\label{p:Tate2}\cite{Ta}[Prop. 2] Assume either $\varphi \in E_{0}$ and $\psi \in E$, or $\varphi \in E_{1}$ and $\psi \in E_{2}$. Thus, the commutator $[\varphi,\psi]=\varphi \psi - \psi \varphi \in E_{0}$ and has zero trace.
\end{prop}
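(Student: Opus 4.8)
I would treat separately the two assertions of the statement — that $[\varphi,\psi]$ lies in $E_{0}$, and that $\tr_{V}([\varphi,\psi])=0$ — deriving both from Proposition \ref{p:Tate1} together with properties (4) and (5) of Tate's trace. For the membership: if $\varphi\in E_{0}$ and $\psi\in E$, then since $E_{0}$ is a two-sided ideal of $E$ we have $\varphi\psi\in E_{0}$ and $\psi\varphi\in E_{0}$, so $[\varphi,\psi]=\varphi\psi-\psi\varphi\in E_{0}$. If instead $\varphi\in E_{1}$ and $\psi\in E_{2}$, I would use that $E_{1}$ and $E_{2}$ are two-sided ideals of $E$ with $E_{1},E_{2}\subset E$: from $\varphi\in E_{1}$ we get $\varphi\psi,\psi\varphi\in E_{1}$, from $\psi\in E_{2}$ we get $\varphi\psi,\psi\varphi\in E_{2}$, hence $\varphi\psi,\psi\varphi\in E_{1}\cap E_{2}=E_{0}$ and again $[\varphi,\psi]\in E_{0}$. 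In both cases $\varphi\psi$ and $\psi\varphi$ are, in particular, finite potent, since $E_{0}$ is finite potent.

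For the trace, the first step is to observe that $E_{0}$ is a finite potent subspace of $\End_{k}(V)$ in the sense of property (4), with $n=2$: given $\theta_{1},\theta_{2}\in E_{0}$, from $\theta_{2}(V)<V_{+}$ one may write $\theta_{2}(V)\subset V_{+}+F$ with $F$ finite dimensional, and then $\theta_{1}\theta_{2}(V)\subset\theta_{1}(V_{+})+\theta_{1}(F)$, which is finite dimensional because $\theta_{1}(V_{+})<(0)$ and $F$ is finite dimensional. By property (4) the map $\tr_{V}\colon E_{0}\to k$ is $k$-linear, so $\tr_{V}([\varphi,\psi])=\tr_{V}(\varphi\psi)-\tr_{V}(\psi\varphi)$. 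Then I would apply property (5) with $V'=V$, $f=\varphi$ and $g=\psi$: since $fg=\varphi\psi$ is finite potent, property (5) yields $\tr_{V}(\varphi\psi)=\tr_{V}(\psi\varphi)$, whence $\tr_{V}([\varphi,\psi])=0$.

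The argument is short once properties (4) and (5) are available, and the only point that needs care is that property (5) be applied legitimately — its hypothesis, the finite potence of $fg=\varphi\psi$, must be checked independently, which is exactly what the ideal structure in Proposition \ref{p:Tate1} provides. If one wished to avoid property (5), the main work would move to proving the cyclic identity $\tr_{V}(\varphi\psi)=\tr_{V}(\psi\varphi)$ directly: one would choose a finite dimensional subspace $W$ invariant under both $\varphi\psi$ and $\psi\varphi$ and containing $(\varphi\psi)^{2}V$ and $(\psi\varphi)^{2}V$, then compare $\tr_{W}$ of the two restrictions using the characterization of Tate's trace recalled after the list of properties; constructing such a common $W$ and controlling the finite dimensional discrepancies coming from the relation $<$ is where the only real delicacy of that alternative lies.
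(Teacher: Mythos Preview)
The paper does not supply its own proof of this proposition; it is simply quoted from Tate \cite{Ta} without argument, so there is nothing to compare against. Your proof is correct and is essentially the standard one: the ideal structure from Proposition~\ref{p:Tate1} gives $\varphi\psi,\psi\varphi\in E_{0}$ in both cases, the $k$-linearity of $\tr_{V}$ on the finite potent subspace $E_{0}$ (property~(4)) splits $\tr_{V}([\varphi,\psi])$ as $\tr_{V}(\varphi\psi)-\tr_{V}(\psi\varphi)$, and property~(5) with $V'=V$ identifies the two terms.
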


\section{Infinite determinants of finite potent endomorphisms.}\label{s:det}

    This section is devoted to defining infinite determinants of finite potent
endomorphism over an arbitrary vector space.

\subsection{Algebraic construction of infinite determinants: definition and basic properties.}\quad 
\label{ss:inf-v-s}

Let $k$ be an arbitrary field and let $V$ be a $k$-vector space. Recall that an endomorphism $\varphi$ of $V$ is ``finite potent" if $\varphi^n V$ is finite dimensional for some $n$.  

 Let us consider the $\varphi$-invariant AST-decomposition $V =
    W_\varphi
\oplus U_\varphi$ (see Remark \ref{r:AST}), where $\varphi_{\vert_{U_\varphi}}$ is
nilpotent, $W_\varphi$ is finite dimensional and
$\varphi_{\vert_{W_\varphi}} \colon W_\varphi \longrightarrow
W_\varphi$ is an isomorphism, we define the determinant by: 
$$\Det^k_V(1 +\varphi) := \Det^k_{W_\varphi}(1 + \varphi_{\vert_{W_\varphi}})\,.$$

It satisfies the following properties:
\begin{enumerate}
\item If $V$ is finite dimensional, then $\Det^k_V(1 + \varphi)$
is the ordinary determinant.

\item If $W$ is a subspace of $V$ such that $\varphi W \subset W$,
then: $$\Det^k_V(1 + \varphi) = \Det^k_W(1 + \varphi) \cdot
\Det^k_{V/W}(1 + \varphi)\, .$$

\item If $\varphi$ is nilpotent, then $\Det^k_V(1 + \varphi) = 1$.
\end{enumerate}

Similar to Tate's definition of traces for finite potent
endomorphisms (see \cite{Ta} and also subsection \ref{s:pre}), properties (1), (2) and (3) characterize
determinants, because if $W$ is a finite dimensional subspace of
$V$ such that $\varphi W \subset W$ and $\varphi^n V \subset W$,
for some $n$, then we have:
 $$\Det^k_V(1 + \varphi) = \Det^k_W(1 +\varphi_{\vert_W})\,.$$
\noindent And, since $\varphi$ is finite potent, we may
take $W = \varphi^n V$. It follows from the definition and properties of $\Det^k_V$ that:

\begin{lem} \label{le:sum} If $\varphi \in \ed (V)$ is a finite potent endomorphism and
 $V = V_1\oplus V_2$ is a decomposition of $V$, where $\varphi V_i \subset
 V_i$ for $i\in \{1,2\}$, one has that:
 $$\Det^k_V(1 + \varphi) = \Det^k_{V_1}(1 + \varphi_{\vert_{V_1}}) \cdot \Det^k_{V_2}(1 + \varphi_{\vert_{V_2}})\, .$$
\end{lem}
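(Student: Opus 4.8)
The plan is to derive Lemma \ref{le:sum} directly from the characterizing properties (1), (2) and (3) of $\Det^k_V$ together with the observation (already recorded in the excerpt) that the determinant can be computed on the AST-decomposition. First I would note that because $\varphi V_1 \subset V_1$, property (2) applies with $W = V_1$, giving
$$\Det^k_V(1+\varphi) = \Det^k_{V_1}(1+\varphi_{\vert_{V_1}}) \cdot \Det^k_{V/V_1}(1+\varphi)\,.$$
Then the point is to identify $\Det^k_{V/V_1}(1+\varphi)$ with $\Det^k_{V_2}(1+\varphi_{\vert_{V_2}})$. Since $V = V_1 \oplus V_2$, the projection $V \to V_2$ along $V_1$ induces a $k$-linear isomorphism $V/V_1 \overset{\sim}{\longrightarrow} V_2$, and because $\varphi$ preserves both $V_1$ and $V_2$, this isomorphism is compatible with the endomorphisms induced by $\varphi$ on the two spaces. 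So the two determinants agree, provided $\Det^k$ is invariant under conjugation by isomorphisms.

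The main technical point I would therefore need is that $\Det^k$ is well-defined on isomorphism classes, i.e. if $g\colon V' \overset{\sim}{\longrightarrow} V$ is an isomorphism and $\varphi$ is finite potent on $V$, then $g^{-1}\varphi g$ is finite potent on $V'$ and $\Det^k_{V'}(1 + g^{-1}\varphi g) = \Det^k_V(1+\varphi)$. This follows from the characterization: choosing a finite-dimensional $\varphi$-invariant $W \subset V$ with $\varphi^n V \subset W$, the subspace $g^{-1}(W)$ is $(g^{-1}\varphi g)$-invariant and contains $(g^{-1}\varphi g)^n V'$, so both determinants reduce to the ordinary determinant of $1$ plus conjugate finite-dimensional matrices, which are equal. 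Alternatively one can invoke the AST-decomposition, since an isomorphism carries the AST-decomposition of $V$ to that of $V'$.

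I expect the only mild obstacle to be bookkeeping: one has to check that the induced map on $V/V_1$ and the restriction to $V_2$ really do correspond under the canonical isomorphism, and that ``finite potent'' is preserved at each stage — but since $\varphi^n V \subset W$ forces $(\varphi_{\vert_{V_i}})^n V_i \subset W \cap V_i$ and $W$ is finite dimensional, each restriction and each quotient inherits finite potence automatically. So the proof is essentially: apply property (2) with $W = V_1$, then transport $\Det^k_{V/V_1}$ across the isomorphism $V/V_1 \cong V_2$ using conjugation-invariance. No serious computation is involved; the whole argument is a short formal consequence of the setup established before the statement.
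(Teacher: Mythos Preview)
Your argument is correct. The paper actually gives no proof of this lemma at all; it simply states that the result ``follows from the definition of $\Det^k_V$'' and moves on. Your route via property~(2) with $W=V_1$ followed by transporting $\Det^k_{V/V_1}$ across the canonical isomorphism $V/V_1\cong V_2$ is a valid way to unpack that claim, and your justification of conjugation-invariance (by pulling back a finite-dimensional witness $W$) is exactly right.

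A marginally more direct version, which is probably what the authors had in mind, avoids the quotient altogether: take $W=\varphi^n V$. Because $\varphi$ preserves each $V_i$, one has $\varphi^n V=\varphi^n V_1\oplus \varphi^n V_2$, so the single finite-dimensional witness $W$ already splits compatibly with the decomposition. Then the ordinary finite-dimensional identity $\Det^k_W(1+\varphi_{\vert W})=\Det^k_{W\cap V_1}(1+\varphi)\cdot\Det^k_{W\cap V_2}(1+\varphi)$ gives the result in one step, without passing through $V/V_1$ or invoking a separate conjugation-invariance lemma. This is not a genuinely different approach, just a slight streamlining of yours.
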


  Let us recall from \cite{Ta} that a subspace $F$
of $\ed (V)$ is called ``finite potent'' when there exists an $n$
such that for any family of $n$ elements, $\varphi_1,\dots
,\varphi_n \in F$, the space $\varphi_1\dots \varphi_n V$ is
finite dimensional. 

We shall denote $\langle \varphi,\psi \rangle$ as the set of non constant, noncommutative polynomials on $\varphi, \psi$.

\begin{lem} \label{l:propret} 
 If $F$ is a finite potent subspace of $\ed (V)$, then:
$$\Det^k_V(1 + \varphi)\cdot \Det^k_V(1 + \psi) = \Det^k_V[(1 + \varphi) (1 +
\psi)] = \Det^k_V[(1 + \psi)(1 + \varphi)]\, ,$$
\noindent for all endomorphisms $\varphi, \psi  \in F$.
\end{lem}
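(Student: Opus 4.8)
The plan is to reduce everything to the finite-dimensional situation by producing a single finite-dimensional subspace that is invariant under both $\varphi$ and $\psi$ (hence under the products $(1+\varphi)(1+\psi)$ and $(1+\psi)(1+\varphi)$) and that "absorbs" all three endomorphisms after finitely many iterations. Since $F$ is a finite potent subspace of $\ed(V)$, there is an integer $n$ such that $\theta_1\cdots\theta_n V$ is finite dimensional for any $\theta_1,\dots,\theta_n\in F$. In particular $\varphi$ and $\psi$ are each finite potent, and so are $(1+\varphi)(1+\psi)-1 = \varphi+\psi+\varphi\psi$ and $(1+\psi)(1+\varphi)-1=\varphi+\psi+\psi\varphi$, because each of these lies in the subalgebra generated by $F$, on which a uniform bound on the dimension of iterated images still holds (any sufficiently long word in $\varphi,\psi$ has finite-dimensional image, and a sum of such is still finite potent). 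Thus all four determinants in the statement are defined.

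Next I would build the common invariant subspace. Let $U = \sum_{i\ge N}(F\text{-words of length }i)\cdot V$ for $N$ large; more concretely, take $W_0 = \varphi^n V + \psi^n V + \big((1+\varphi)(1+\psi)-1\big)^n V + \big((1+\psi)(1+\varphi)-1\big)^n V$ and then close it up under $\varphi$ and $\psi$ by setting $W = W_0 + \varphi W_0 + \psi W_0 + \varphi\psi W_0 + \dots$, taking all words of length at most $n-1$ in $\varphi,\psi$ applied to $W_0$. Because $F$ is finite potent with bound $n$, only finitely many word-lengths contribute a genuinely new finite-dimensional piece, so $W$ is finite dimensional; by construction $\varphi W\subset W$ and $\psi W\subset W$, hence $(1+\varphi)(1+\psi)W\subset W$ and likewise for the other order; and $W$ contains $\varphi^n V,\psi^n V$ and the analogous iterated images of the two products. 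The characterization of $\Det^k_V$ recalled just before Lemma \ref{le:sum} (if $W$ is finite dimensional, $\theta$-invariant, and $\theta^m V\subset W$, then $\Det^k_V(1+\theta)=\Det^k_W(1+\theta_{\vert_W})$) then applies simultaneously to $\theta=\varphi$, $\theta=\psi$, $\theta=(1+\varphi)(1+\psi)-1$, and $\theta=(1+\psi)(1+\varphi)-1$, reducing all four determinants to determinants of operators on the single finite-dimensional space $W$.

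On $W$ everything is classical: $\Det^k_W$ is the ordinary determinant, which is multiplicative and conjugation-invariant, so
$$\Det^k_W(1+\varphi_{\vert_W})\cdot\Det^k_W(1+\psi_{\vert_W}) = \Det^k_W\big((1+\varphi)(1+\psi)_{\vert_W}\big) = \Det^k_W\big((1+\psi)(1+\varphi)_{\vert_W}\big),$$
the last equality because $(1+\varphi)(1+\psi)$ and $(1+\psi)(1+\varphi)$ are conjugate via $1+\varphi$ (an invertible operator on $W$, since $\varphi_{\vert_W}$ together with its nilpotent part gives $1+\varphi_{\vert_W}$ invertible on the finite-dimensional $W$ whenever $-1$ is not an eigenvalue — and if it is, one argues instead directly that $\det$ is a class function, or passes to $W\oplus W_{\text{nil}}$). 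Transporting each term back through the reductions of the previous paragraph yields the claimed identity on $V$.

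The main obstacle I anticipate is the bookkeeping in the second paragraph: checking carefully that the closure $W$ is genuinely finite dimensional and simultaneously contains high enough iterated images of all three relevant operators. The finite-potency bound is for words in $F$, but $(1+\varphi)(1+\psi)-1$ is a sum of words of various lengths, so one must argue that $\big((1+\varphi)(1+\psi)-1\big)^n V$ still lands in the span of length-$\ge\! 1$ word-images and is finite dimensional — this uses that the whole subalgebra $k[F]$ acts with uniformly finite-dimensional images on $V$ modulo the relevant finite-dimensional piece. Once that is set up cleanly, the rest is the standard finite-dimensional fact together with the already-established characterization of $\Det^k_V$.
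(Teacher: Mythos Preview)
Your approach is essentially the paper's: find a single finite-dimensional subspace $W$ invariant under both $\varphi$ and $\psi$, containing high iterates of all the relevant operators, and invoke ordinary multiplicativity of $\Det^k_W$ there. The paper's construction of $W$ is cleaner and dispatches exactly the bookkeeping you flag: it sets $\langle\varphi,\psi\rangle := \varphi\, k[\varphi,\psi] + \psi\, k[\varphi,\psi]$ (a finite potent subspace since any product of $n$ of its elements is a finite sum of words of length $\ge n$ in $\varphi,\psi$) and takes $W = \langle\varphi,\psi\rangle^n V$. This $W$ is finite dimensional (it sits inside the sum over the $2^n$ length-$n$ words of their images), visibly $\varphi$- and $\psi$-invariant, and contains $\varphi^n V$, $\psi^n V$, and $(\varphi+\psi+\varphi\psi)^n V$ because each of these lies in $\langle\varphi,\psi\rangle^n$.

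Your closure-under-words-of-length-$\le n-1$ construction is not quite invariant as written: applying $\varphi$ to a length-$(n-1)$ word times $W_0$ gives a length-$n$ word times $W_0$, and there is no reason this lands back in your $W$ unless $W_0$ already contains all length-$n$ word-images of $V$. Once you enlarge $W_0$ that way you have essentially rebuilt the paper's $W$. Finally, the conjugacy detour for the last equality is unnecessary (and fragile, since $1+\varphi_{\vert W}$ need not be invertible): once $\Det^k_W\big((1+\varphi)(1+\psi)\big)=\Det^k_W(1+\varphi)\cdot\Det^k_W(1+\psi)$, commutativity of multiplication in $k$ gives equality with $\Det^k_W\big((1+\psi)(1+\varphi)\big)$ directly.
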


\begin{proof} 

 Since $\varphi, \psi \in F$ then $\langle \varphi, \psi \rangle$ is a finite potent subspace of $\ed (V)$
such that the finite dimensional space $W = \langle \varphi,\psi \rangle^n V$
satisfies the following properties:
\begin{itemize}
\item $W$ is invariant for $\varphi$ and $\psi$. \item $\varphi^n
V \subseteq W$ and $\psi^n V \subseteq W$. \item $\Det^k_V (1 +
\varphi) = \Det^k_W (1 + \varphi)$ and $\Det^k_V (1 + \psi) =
\Det^k_W (1 + \psi)$.  \item $W$ is invariant for $\varphi + \psi
+ \varphi \psi$; $(\varphi + \psi + \varphi \psi)^n V \subseteq W$
and
$$\Det^k_V [1 + \varphi + \psi + \varphi \psi] = \Det^k_W [1 +
(\varphi + \psi + \varphi \psi)_{\vert_W}]\, .$$
\end{itemize}

Therefore:
$$\begin{aligned} \Det^k_V(1 + \varphi)\cdot
\Det^k_V(1 + \psi) &= \Det^k_W(1 + \varphi_{\vert_W})\cdot
\Det^k_W(1 + \psi_{\vert_W}) = \\ &= \Det^k_W[(1 +
\varphi_{\vert_W})(1 + \psi_{\vert_W})] \\  &= \Det^k_W [1 +
(\varphi + \psi + \varphi \psi)_{\vert_W}] \\ &= \Det^k_V[(1 +
\varphi) (1 + \psi)] = \Det^k_V[(1 + \psi)(1 + \varphi)]\,.
\end{aligned}$$
\end{proof}

\begin{lem} \label{l:werw} Given two finite potent endomorphisms $\varphi, \psi \in \ed
(V)$ such that the commutator $[\varphi,\psi]=\varphi \psi - \psi \varphi$ is of finite rank,
one has that $\varphi + \psi$ is a finite potent endomorphism.
\end{lem}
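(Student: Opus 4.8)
The plan is to expand a suitable power $(\varphi+\psi)^{N}$, sort each resulting monomial into the ``normal form'' $\varphi^{a}\psi^{b}$ at the cost of terms that lie in the two-sided ideal generated by the commutator, and then observe that \emph{both} contributions have finite-dimensional image. First I would fix an integer $n$ large enough that $\varphi^{n}V$ and $\psi^{n}V$ are both finite dimensional: taking $n$ to be the maximum of the two potency exponents works, since $\varphi^{n}V=\varphi^{\,n-n_{\varphi}}(\varphi^{n_{\varphi}}V)$ is then finite dimensional, and likewise for $\psi$. Put $c=[\varphi,\psi]$, which is of finite rank by hypothesis, let $\mathcal A\subseteq\End_{k}(V)$ be the unital subalgebra generated by $\varphi$ and $\psi$, and let $I=\mathcal A c\mathcal A$ be the two-sided ideal it generates in $\mathcal A$.

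The key observation is that every element of $I$ is a finite-rank endomorphism of $V$. Indeed, such an element is a \emph{finite} sum $\sum_{i}a_{i}cb_{i}$ with $a_{i},b_{i}\in\mathcal A$, and $\bigl(\sum_{i}a_{i}cb_{i}\bigr)V\subseteq\sum_{i}a_{i}(cV)$ is a finite sum of finite-dimensional subspaces. Consequently the images of $\varphi$ and $\psi$ commute in the quotient algebra $\mathcal A/I$, so $\mathcal A/I$ is commutative, and the binomial theorem (valid over any commutative ring) gives
$$(\varphi+\psi)^{2n}\;\equiv\;\sum_{a+b=2n}\binom{2n}{a}\,\varphi^{a}\psi^{b}\pmod I\,.$$
Equivalently, and more elementarily, one shows by induction on the number of inversions that each length-$2n$ word in $\varphi,\psi$ equals its sorted form $\varphi^{a}\psi^{b}$ plus a finite sum of terms $w_{1}cw_{2}$ (using $\psi\varphi=\varphi\psi-c$), and then sums over all words. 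In either presentation one concludes $(\varphi+\psi)^{2n}=\sum_{a+b=2n}\binom{2n}{a}\varphi^{a}\psi^{b}+\theta$ with $\theta\in I$ of finite rank.

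It then remains to check that each sorted monomial has finite-dimensional image when $a+b=2n$: either $a\ge n$, in which case $\varphi^{a}\psi^{b}V\subseteq\varphi^{a}V=\varphi^{\,a-n}(\varphi^{n}V)$ is finite dimensional; or $b\ge n$, in which case $\psi^{b}V=\psi^{\,b-n}(\psi^{n}V)$ is finite dimensional and $\varphi^{a}$ carries it into a finite-dimensional space. Being a finite sum of operators with finite-dimensional image, $\sum_{a+b=2n}\binom{2n}{a}\varphi^{a}\psi^{b}$ has finite-dimensional image; adding the finite-rank operator $\theta$ preserves this, so $(\varphi+\psi)^{2n}V$ is finite dimensional, i.e.\ $\varphi+\psi$ is finite potent.

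I expect the only genuinely delicate point to be the key observation in the second paragraph: although $I$ is typically infinite dimensional as a subspace of $\End_{k}(V)$, \emph{each individual element} of $I$ has finite rank precisely because it is a finite sum of the generating products $acb$, each of which factors through the finite-dimensional space $cV$. Once this is in place, the reduction $(\varphi+\psi)^{2n}\equiv\sum\binom{2n}{a}\varphi^{a}\psi^{b}\pmod I$ and the case analysis $\max(a,b)\ge n$ are routine bookkeeping.
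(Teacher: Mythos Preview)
Your proof is correct and follows essentially the same approach as the paper: expand $(\varphi+\psi)^{2N}$, use the commutator relation to bring each word into normal form $\varphi^{a}\psi^{b}$ up to correction terms factoring through $cV$, and observe every resulting piece has finite-dimensional image. The paper writes this down directly as an inclusion $(\varphi+\psi)^{2N}V\subseteq \sum\varphi^{i}(W_{\psi})+\sum\psi^{j}(W_{\varphi})+\sum\varphi^{h}\psi^{k}(W_{\varphi\psi})$, whereas you package the same reduction via the commutative quotient $\mathcal A/I$ and the binomial theorem; your formulation is arguably tidier, but the underlying argument is the same.
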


\begin{proof} Let $N > 0$ be a positive integer such that
$\varphi^N (V) \subseteq W_{\varphi}$ and $\psi^N (V) \subseteq
W_{\psi}$, $W_{\varphi}$ and $W_{\psi}$ being finite dimensional
$k$-vector subspaces of $V$. If $W_{\varphi \psi}$ is a
finite-dimensional subspace of $V$ such that $[\varphi \psi -
\psi \varphi](V) \subseteq W_{\varphi \psi}$, it is clear that
$$(\varphi + \psi)^{2N} (V) \subseteq \sum_{i=1}^{2N-1} \varphi^i
(W_{\psi}) + \sum_{j=1}^{2N-1} \psi^j (W_{\varphi}) + \sum_{h,k =
1}^{2N-1} \varphi^h \psi^k (W_{\varphi \psi})\, ,$$\noindent
hence the statement is deduced.
\end{proof}

    Similarly, one can see that:

\begin{lem}\label{l:finrank} Given two finite potent endomorphisms $\varphi, \psi \in \ed
(V)$ such that $\varphi \psi - \psi \varphi$ is of finite rank,
one has that $\langle \varphi, \psi \rangle$ is a finite potent subspace of $\ed (V)$.
\end{lem}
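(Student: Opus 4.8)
The plan is to imitate the proof of Lemma \ref{l:werw}, upgrading it from the single endomorphism $\varphi+\psi$ to the whole subspace $\langle\varphi,\psi\rangle := \varphi k[\varphi,\psi] + \psi k[\varphi,\psi]$. The point is that $k[\varphi,\psi]$ is a (possibly non-commutative, since $[\varphi,\psi]$ need not be central) $k$-algebra, but because $[\varphi,\psi]$ has finite rank, elements of $k[\varphi,\psi]$ are, modulo a finite-dimensional correction, words in $\varphi$ and $\psi$; and any sufficiently long word in $\varphi$ and $\psi$ sends $V$ into a fixed finite-dimensional subspace, essentially by the same bookkeeping used in Lemma \ref{l:werw}.

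First I would fix $N>0$ with $\varphi^N(V)\subseteq W_\varphi$ and $\psi^N(V)\subseteq W_\psi$ for finite-dimensional $W_\varphi, W_\psi$, and fix a finite-dimensional $W_{[\,]}$ with $[\varphi,\psi](V)\subseteq W_{[\,]}$. The key observation is a ``straightening'' lemma: in any product of $m$ factors each equal to $\varphi$ or $\psi$, one may move all the $\varphi$'s to the left past all the $\psi$'s at the cost of introducing, for each transposition, a term that factors through $[\psi,\varphi]$, hence through $W_{[\,]}$; consequently a monomial of total degree $m$ in $\varphi,\psi$ maps $V$ into $\sum_{i+j=m}\varphi^i\psi^j(V)$ plus a sum of terms of the form $\varphi^a\psi^b(W_{[\,]})$ with $a,b<m$. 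Applying this to $m\ge 2N$ and using $\varphi^N(V)\subseteq W_\varphi$, $\psi^N(V)\subseteq W_\psi$, every monomial of degree $\ge 2N$ lands in the fixed finite-dimensional space
$$\Omega \;=\; \sum_{i=0}^{2N-1}\varphi^i(W_\psi)\;+\;\sum_{j=0}^{2N-1}\psi^j(W_\varphi)\;+\;\sum_{a,b=0}^{2N-1}\varphi^a\psi^b(W_{[\,]})\,.$$

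Next I would deduce that for each monomial $\mu$ of degree $\ge 2N$ one has $\mu(V)\subseteq\Omega$, and, since $[\varphi,\psi]$ has finite rank, that every element of $k[\varphi,\psi]$ differs from a $k$-linear combination of monomials of bounded degree by an operator whose image lies in $\im[\varphi,\psi]\subseteq W_{[\,]}$; enlarging $\Omega$ once more (finitely) absorbs these corrections. Now take any $n\ge 2N+2$ and any $\theta_1,\dots,\theta_n\in\langle\varphi,\psi\rangle$; each $\theta_s$ is of the form $\varphi p_s + \psi q_s$ with $p_s, q_s\in k[\varphi,\psi]$. Expanding $\theta_1\cdots\theta_n(V)$, every resulting term begins with $\varphi$ or $\psi$ and contains, after applying the straightening step to the intervening polynomials, a subword that is a monomial of degree $\ge 2N$ in $\varphi,\psi$; hence each term, and therefore $\theta_1\cdots\theta_n(V)$, lies in $\Omega$ (or its finite enlargement). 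Thus $\langle\varphi,\psi\rangle^n V$ is finite-dimensional, which is exactly the definition of a finite potent subspace of $\ed(V)$.

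The main obstacle is purely combinatorial rather than conceptual: making precise that an arbitrary element of $k[\varphi,\psi]$ — a sum of products of the non-commuting generators $[\varphi,\psi]$-worth of data — can be written as a combination of $\varphi,\psi$-monomials of uniformly bounded degree modulo something of finite rank, and then controlling the degree bookkeeping when these polynomials are interleaved in a product $\theta_1\cdots\theta_n$. Once one commits to tracking ``total degree in $\varphi$ and $\psi$'' and the fixed finite-dimensional remainder space $W_{[\,]}$, the argument is a routine but slightly tedious extension of the estimate in the proof of Lemma \ref{l:werw}, which is presumably why the authors only say ``similarly, one can see that''.
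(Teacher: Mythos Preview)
Your proposal is correct and coincides with the paper's approach: the authors give no proof beyond the phrase ``Similarly, one can see that'', pointing back to Lemma~\ref{l:werw}, and your straightening argument landing in the fixed finite-dimensional space $\Omega$ is precisely the intended adaptation. (The detour about elements of $k[\varphi,\psi]$ ``differing from bounded-degree monomials by a finite-rank correction'' is unnecessary --- by definition $k[\varphi,\psi]$ is the $k$-subalgebra generated by $\varphi$ and $\psi$, so its elements are already finite $k$-linear combinations of words in $\varphi,\psi$; once every word of length $\ge 2N$ maps $V$ into $\Omega$, any product $\theta_1\cdots\theta_n$ with $n\ge 2N$, being a finite combination of such words, automatically does too.)
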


\begin{cor} \label{cor:werw}  If $\varphi, \psi \in \ed
(V)$ are two commuting finite potent endomorphisms, then
$\langle \varphi, \psi \rangle$ is a finite potent subspace of $\ed (V)$.
\end{cor}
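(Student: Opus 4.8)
The plan is to deduce this directly from Lemma \ref{l:finrank} by observing that the hypothesis there is trivially satisfied in the commuting case. Indeed, if $\varphi$ and $\psi$ commute, then $\varphi\psi - \psi\varphi = 0$, which is an endomorphism of rank zero, hence certainly of finite rank. Therefore Lemma \ref{l:finrank} applies verbatim and yields that $<\varphi,\psi> = \varphi k[\varphi,\psi] + \psi k[\varphi,\psi]$ is a finite potent subspace of $\ed(V)$.

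It is worth recording what the space $<\varphi,\psi>$ actually looks like in this situation, since the commutator vanishes: the polynomial ring $k[\varphi,\psi]$ is just the commutative subalgebra of $\ed(V)$ generated by $\varphi$ and $\psi$, so $<\varphi,\psi>$ is the $k$-span of all monomials $\varphi^a\psi^b$ with $a+b \geq 1$. The first step is thus simply to check that the proof of Lemma \ref{l:finrank} (which in turn refines the argument of Lemma \ref{l:werw}) goes through with $W_{\varphi\psi} = (0)$; one then gets a uniform bound $n$ such that any product of $n$ elements of $<\varphi,\psi>$ maps $V$ into a fixed finite-dimensional subspace built from the $\varphi^i(W_\psi)$ and $\psi^j(W_\varphi)$, where $\varphi^N(V)\subseteq W_\varphi$ and $\psi^N(V)\subseteq W_\psi$ with $W_\varphi, W_\psi$ finite dimensional.

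There is essentially no obstacle here: the corollary is a special case of the preceding lemma, and the only thing to verify is that "finite rank" includes "rank zero," which is immediate. If one wished to make the statement self-contained rather than invoking Lemma \ref{l:finrank}, the one-line estimate
$$
<\varphi,\psi>^{2N}(V) \subseteq \sum_{i=0}^{2N}\varphi^i(W_\psi) + \sum_{j=0}^{2N}\psi^j(W_\varphi)
$$
(using $\varphi\psi = \psi\varphi$ to sort any monomial of total degree $2N$ so that one of the two exponents is at least $N$) exhibits the required finite-dimensionality directly. Either route completes the proof.
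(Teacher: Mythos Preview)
Your argument is correct and matches the paper's approach exactly: the corollary is stated immediately after Lemma~\ref{l:finrank} with no separate proof, so it is meant to follow at once from that lemma via the observation that a vanishing commutator is trivially of finite rank. Your additional self-contained estimate is fine but unnecessary for the paper's purposes.
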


\begin{prop}\label{prop:fp} If $\varphi$ and $\psi$ are finite potent
endomorphisms such that \linebreak $(1 + \varphi)(1 + \psi) = (1+\psi)(1+\varphi)=1$,
then $\langle \varphi, \psi \rangle$ is a finite potent subspace of $\ed (V)$.
\end{prop}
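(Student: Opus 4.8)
The plan is to reduce the statement to Corollary \ref{cor:werw} by showing that $\varphi$ and $\psi$ in fact commute. Expanding the hypothesis $(1+\varphi)(1+\psi)=1$ yields $\varphi + \psi + \varphi\psi = 0$, i.e. $\varphi\psi = -\varphi-\psi$. If one also knows that $(1+\psi)(1+\varphi)=1$, then symmetrically $\psi\varphi = -\varphi-\psi$, so $[\varphi,\psi] = \varphi\psi - \psi\varphi = 0$; since $\varphi$ and $\psi$ are then commuting finite potent endomorphisms, $<\varphi,\psi>$ is a finite potent subspace of $\ed(V)$ by Corollary \ref{cor:werw}, which finishes the proof.

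Hence the whole argument rests on upgrading the one-sided identity $(1+\varphi)(1+\psi)=1$ to a two-sided one, and for this I would prove that $1+\varphi$ is bijective. It is surjective, since every $v\in V$ equals $(1+\varphi)\bigl((1+\psi)v\bigr)$. To promote surjectivity to bijectivity, pass to the $\varphi$-invariant AST-decomposition $V = W_\varphi \oplus U_\varphi$ of Remark \ref{r:AST}: because $\varphi$ preserves this decomposition, so does $1+\varphi$; on $U_\varphi$ the operator $\varphi_{\vert_{U_\varphi}}$ is nilpotent, so $1+\varphi_{\vert_{U_\varphi}}$ is unipotent and therefore invertible on $U_\varphi$; and on the finite-dimensional space $W_\varphi$ the surjective operator $1+\varphi_{\vert_{W_\varphi}}$ is automatically bijective. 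Consequently $1+\varphi$ is invertible on $V$, and a right inverse of an invertible operator coincides with its two-sided inverse, so indeed $(1+\psi)(1+\varphi)=1$.

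The only non-formal point, and thus the main obstacle, is precisely this: a one-sided inverse of $1+(\text{finite potent})$ is automatically two-sided. Without the finite-potence hypothesis this fails (shift-type operators on an infinite-dimensional space are counterexamples), so the AST-decomposition is doing the essential work. Everything else is routine bookkeeping: once $[\varphi,\psi]=0$ is in hand, Corollary \ref{cor:werw} (or directly Lemma \ref{l:finrank}) applies. One could equally well run the mirror-image argument, using that $1+\psi$ is injective and $\psi$ is finite potent to conclude that $1+\psi$ is bijective; either route delivers the two-sided identity.
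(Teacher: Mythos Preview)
Your proof is correct and follows the same strategy as the paper: show that $\varphi\psi=\psi\varphi$ and then invoke Corollary~\ref{cor:werw}. The paper's proof simply asserts the commutation without further argument, whereas you supply the missing justification---using the AST-decomposition to upgrade the one-sided inverse to a two-sided one---so your version is in fact more complete.
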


\begin{proof} Since $\varphi \psi = \psi \varphi$, the claim is a
direct consequence of Corollary \ref{cor:werw}.
\end{proof}

\begin{prop} \label{prop:invertible} Let $\varphi$ be a finite potent endomorphism of $V$.
Then: $$\Det^k_V(1 + \varphi) \ne 0 \quad \iff \quad 1 + \varphi \, \text{ is invertible.}$$
\end{prop}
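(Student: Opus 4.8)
The plan is to reduce everything to the finite-dimensional summand furnished by the AST-decomposition. First I would take the $\varphi$-invariant decomposition $V = W_\varphi \oplus U_\varphi$ of Remark \ref{r:AST}, in which $W_\varphi$ is finite dimensional, $\varphi_{\vert_{W_\varphi}}$ is an isomorphism, and $\varphi_{\vert_{U_\varphi}}$ is nilpotent. Both summands are $\varphi$-invariant, hence $(1+\varphi)$-invariant, so $1+\varphi$ is the ``block-diagonal'' operator $(1+\varphi_{\vert_{W_\varphi}}) \oplus (1+\varphi_{\vert_{U_\varphi}})$; an elementary, purely algebraic check shows that such an operator is bijective if and only if each of its two restrictions is.

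The next step is to note that $1+\varphi_{\vert_{U_\varphi}}$ is \emph{always} invertible: since $\varphi_{\vert_{U_\varphi}}$ is nilpotent, the finite sum $\sum_{i\ge 0}(-\varphi_{\vert_{U_\varphi}})^{i}$ is a two-sided inverse of $1+\varphi_{\vert_{U_\varphi}}$. Hence $1+\varphi$ is invertible on $V$ if and only if $1+\varphi_{\vert_{W_\varphi}}$ is invertible on the finite-dimensional space $W_\varphi$. Now I would invoke the characterizing property of $\Det^k$ recalled just before Lemma \ref{le:sum}, which gives $\Det^k_V(1+\varphi)=\Det^k_{W_\varphi}(1+\varphi_{\vert_{W_\varphi}})$, together with the fact that on a finite-dimensional space the invariant $\Det^k$ is the ordinary determinant, and that an endomorphism of a finite-dimensional vector space is invertible exactly when its ordinary determinant is nonzero. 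Stringing these together yields $\Det^k_V(1+\varphi)\ne 0$ $\iff$ $\Det^k_{W_\varphi}(1+\varphi_{\vert_{W_\varphi}})\ne 0$ $\iff$ $1+\varphi_{\vert_{W_\varphi}}$ is invertible $\iff$ $1+\varphi$ is invertible.

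I do not expect a genuine obstacle here. The only points that need a word of care are the formal verification that a $\varphi$-stable direct-sum decomposition reduces invertibility of $1+\varphi$ to invertibility on each summand (with no topology in sight), and the observation that on the nilpotent summand invertibility is automatic, so that the whole question is governed by the finite-dimensional block, where the classical determinant criterion applies. One could instead run the same reduction with $W=\varphi^{n}V$ and the short exact sequence $0\to W\to V\to V/W\to 0$, but then exactness has to be pushed through the five lemma, which is a shade more cumbersome than the direct-sum argument above.
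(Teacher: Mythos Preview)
Your argument is correct, and for the implication $\Det^k_V(1+\varphi)\ne 0 \Rightarrow 1+\varphi$ invertible it is essentially identical to the paper's: both use the AST-decomposition, invoke the ordinary determinant criterion on $W_\varphi$, and assemble the inverse from $(1+\varphi_{\vert_{W_\varphi}})^{-1}$ on $W_\varphi$ and the geometric series $\sum_i(-\varphi_{\vert_{U_\varphi}})^i$ on $U_\varphi$.

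For the converse direction, however, the paper proceeds differently. Rather than reducing to $W_\varphi$ via AST, it takes the abstract inverse $\delta=(1+\varphi)^{-1}$, sets $\psi=\delta-1=-\delta\varphi$, and shows directly that $\psi$ is finite potent (since $\psi^n V=\delta^n\varphi^n V\subseteq \delta^n W$ with $W$ finite dimensional). Then, because $\varphi\psi=\psi\varphi$, Proposition~\ref{prop:fp} gives that $\langle\varphi,\psi\rangle$ is a finite potent subspace, so the multiplicativity Lemma~\ref{l:propret} yields $\Det^k_V(1+\varphi)\cdot\Det^k_V(1+\psi)=\Det^k_V(1)=1$, whence $\Det^k_V(1+\varphi)\ne 0$. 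Your route is more symmetric and avoids invoking Lemma~\ref{l:propret} and Proposition~\ref{prop:fp} altogether, which is a genuine economy. The paper's route, on the other hand, yields as a byproduct the useful fact that $(1+\varphi)^{-1}$ again has the form $1+\psi$ with $\psi$ finite potent, without passing through the AST splitting.
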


\begin{proof} Assume that $1 + \varphi$ is
invertible and $(1 + \varphi)\delta = \delta (1 + \varphi) = 1$.
Assume too that $\varphi^n V \subseteq W$, with $W$ a finite
dimensional subspace of $V$. Let us denote:
$$\psi := \delta - 1 = -\varphi \delta = -\delta \varphi\,
.$$\noindent 
Then, $\psi^n V = \varphi^n \delta^n V \subseteq
\varphi^n V \subseteq W$ and therefore $\psi$ is a finite potent
endomorphism such that $(1 + \varphi)(1 + \psi) = (1+\psi)(1+\varphi)=1$.

Therefore, it follows from  Proposition
\ref{prop:fp} and Lemma \ref{l:propret} that:
$$\Det^k_V(1 + \varphi) \cdot \Det^k_V(1 + \psi) = \Det^k_V 1 = 1\,
,$$\noindent and, in particular, $\Det^k_V(1 + \varphi) \ne 0$.

    Conversely, if $\Det^k_V(1 + \varphi) \ne 0$ and
$V = W_\varphi \oplus U_\varphi$ is the
 $\varphi$-invariant AST-decomposition,
one has that  $\Det^k_{W_{\varphi}}(1 + \varphi_{\vert_{W_\varphi}})\ne 0$ and
$1 + \varphi_{\vert_{W_\varphi}} \in \ed_k(W_\varphi)$ is
invertible.

    Thus, we can construct $\delta\in \ed_k(V)$ such that $(1
+ \varphi)\delta = \delta (1 + \varphi) = 1$ as follows:
$$\delta (v) = \left \{ {\begin{aligned} (1 +
\varphi_{\vert_{W_\varphi}})^{-1} (v) \quad &\text{ if } \quad
v\in W_\varphi \\ (1 + \sum_{i = 1}^s (-1)^i
\varphi_{\vert_{U_\varphi}}^i) (v) \quad &\text{ if } \quad v\in
U_\varphi\end{aligned}}\right .$$ with
$\varphi_{\vert_{U_\varphi}}^{s+1} = 0$, hence the claim is
proved.
\end{proof}

    Furthermore, when we consider an extension of fields we obtain
the following result:

\begin{prop} If $k\hookrightarrow K$ is a finite extension of
fields, $V$ is a $K$-vector space, and $\varphi \in  \ed_K (V)$
such that $1 + \varphi$ is invertible, then we have:
$$\Det^k_V (1 + \varphi) = N_{K/k} [\Det^K_V (1 + \varphi)]\,
,$$\noindent $N_{K/k}\colon K^\times \longrightarrow k^\times$
being the norm of the extension $k\hookrightarrow K$.
\end{prop}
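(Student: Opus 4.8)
The plan is to reduce the statement to the classical identity relating the norm of a finite field extension to the determinant of a $K$-linear automorphism regarded as a $k$-linear map, applied to a single finite-dimensional piece. First I would note that restriction of scalars is harmless here: since $[K:k]<\infty$, if $\varphi^n V$ is finite dimensional over $K$ it is finite dimensional over $k$, so $\varphi$ is also finite potent as a $k$-linear endomorphism and $\Det^k_V(1+\varphi)$ is defined. Moreover, by Proposition \ref{prop:invertible} the hypothesis that $1+\varphi$ is invertible is equivalent to $\Det^K_V(1+\varphi)\ne 0$, and the $K$-linear inverse of $1+\varphi$ is a fortiori $k$-linear, so $\Det^k_V(1+\varphi)\ne 0$ as well; both sides of the asserted equality therefore genuinely live in $k^\times$.

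Next I would pass to the $\varphi$-invariant AST-decomposition $V=W_\varphi\oplus U_\varphi$ of Remark \ref{r:AST}, taken over $K$. This is simultaneously a decomposition of $V$ as a $k$-vector space into $\varphi$-invariant subspaces, with $W_\varphi$ finite dimensional over $K$ (hence over $k$) and $\varphi_{\vert_{U_\varphi}}$ nilpotent (nilpotence being insensitive to the base field). Applying Lemma \ref{le:sum} over $k$ and over $K$, and using the normalization property (3) on the nilpotent parts, namely $\Det^k_{U_\varphi}(1+\varphi)=\Det^K_{U_\varphi}(1+\varphi)=1$ together with $N_{K/k}(1)=1$, the claim reduces to
$$\Det^k_{W_\varphi}(1+\varphi_{\vert_{W_\varphi}})=N_{K/k}\bigl[\Det^K_{W_\varphi}(1+\varphi_{\vert_{W_\varphi}})\bigr].$$
Since the decomposition is $(1+\varphi)$-invariant and $(1+\varphi)_{\vert_{U_\varphi}}=\id+\varphi_{\vert_{U_\varphi}}$ is unipotent, invertibility of $1+\varphi$ on $V$ forces $g:=(1+\varphi)_{\vert_{W_\varphi}}$ to lie in $\aut_K(W_\varphi)$, and by property (1) both determinants above are now ordinary (finite-dimensional) determinants.

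It thus remains to prove the classical fact: for a finite-dimensional $K$-vector space $W$, $m=\dim_K W$, and $g\in\aut_K(W)$, one has $\Det^k_W(g)=N_{K/k}(\Det^K_W(g))$, where on the left $W$ is viewed as a $k$-vector space. Both $g\mapsto\Det^k_W(g)$ and $g\mapsto N_{K/k}(\Det^K_W(g))$ are group homomorphisms $\aut_K(W)\to k^\times$, so it suffices to check agreement on a generating set. Fixing a $K$-basis, Gaussian elimination shows that $\aut_K(W)\cong GL_m(K)$ is generated by the transvections $\id+\lambda E_{ij}$ ($i\ne j$, $\lambda\in K$) and the diagonal maps $\operatorname{diag}(a,1,\dots,1)$, $a\in K^\times$. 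A transvection is unipotent already over $K$ and hence over $k$, so both homomorphisms send it to $1=N_{K/k}(1)$. The map $\operatorname{diag}(a,1,\dots,1)$ has $K$-determinant $a$, while as a $k$-linear map it is the direct sum of "multiplication by $a$" on a single copy of $K$ and the identity on a $k$-complement, so its $k$-determinant is the determinant of the multiplication-by-$a$ endomorphism of $K$, which is $N_{K/k}(a)$ by the very definition of the norm. The two homomorphisms therefore agree on generators, hence everywhere, completing the proof.

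I expect no deep obstacle here: the only friction is the bookkeeping of checking that restriction of scalars is compatible with finite potence, with the AST-decomposition, and with invertibility on the finite part, and then invoking the standard norm–determinant identity. (If one wished to avoid the invertibility hypothesis in the last step, one could instead observe that $g\mapsto\Det^k_W(g)$ and $g\mapsto N_{K/k}(\Det^K_W(g))$ are regular functions on $\End_K(W)\cong\A^{m^2}$ coinciding on the Zariski-dense open locus $\aut_K(W)$; this is not needed under the present hypotheses.)
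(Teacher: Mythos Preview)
Your proof is correct and follows essentially the same route as the paper: reduce both determinants to a common finite-dimensional $K$-subspace (you use the AST piece $W_\varphi$, the paper takes any $W$ with $\varphi^n V\subset W$) and then invoke the classical identity $\Det^k_W(\phi)=N_{K/k}[\Det^K_W(\phi)]$ for $\phi\in\aut_K(W)$. The only difference is that you spell out the verification that restriction of scalars is compatible with finite potence and with the AST splitting, and you actually prove the norm--determinant identity via generators of $GL_m(K)$, whereas the paper simply quotes it as a ``well-known property''.
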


\begin{proof} If $\Det^K_V (1 + \varphi) = \Det^K_W (1 +
\varphi_{\vert_W})$ for a finite-dimensional $K$-subspace
$W\subset V$, it is clear that $\Det^k_V (1 + \varphi) = \Det^k_W
(1 + \varphi_{\vert_W})$, and the claim therefore follows from the
following well-known property of determinants  on
finite-dimensional vector spaces:
$$\Det_W^k(\phi) = N_{K/k} [\Det^K_W (\phi)]\,,$$\noindent where
$\phi \in \aut_K (W)$.
\end{proof}

\begin{rem} In \cite{Gr} Grothendieck developed an algebraic
theory of determinants {\bf {d{\'e}t (1+u)}} where $u$ is a finite
rank endomorphism on an infinite-dimensional vector space over a
ground field $k$ of characteristic zero. Obviously, each finite
rank endomorphism is a finite potent endomorphism, and it can be verified that $\Det^k_V (1 + u) = \text{d{\'e}t} (1+u)$ for every finite
rank linear operator $u \in \ed (V)$. Hence, our definition of
determinants of finite potent endomorphisms is a generalization of
Grothendieck's algebraic discussion of infinite determinants.
\end{rem}

\subsection{Infinite determinants and the exterior algebra.} \quad \label{ss:Det&Ext}

    For each positive integer $r$, one has that an endomorphism $\varphi \in
\ed_k(V)$ induces an endomorphism $\bigwedge^r \varphi \in
\ed_k(\bigwedge^r V)$, defined as: $$[\bigwedge^r \varphi] (v_1 \wedge
\cdots \wedge v_r) = \varphi (v_1) \wedge \cdots \wedge \varphi
(v_r)\, ,$$\noindent where $\bigwedge^r V$ is the component of
degree $r$ of the exterior algebra of $V$.

    If $\varphi$ is finite potent, then $\bigwedge^r \varphi$ is
also finite potent, and it is clear that:

\begin{lem} \label{l:cote} If $\varphi \in
\ed_k(V)$ is finite potent such that $\varphi^n V \subset W$, then
$\bigwedge^r \varphi$ is nilpotent for all $r > \text{dim}_k (W)$.
\end{lem}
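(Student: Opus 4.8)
The plan is to exploit the fact that $\varphi$ is finite potent to reduce the question about $\Lambda^r\varphi$ to a nilpotency statement on a finite-dimensional space, where it becomes elementary. By hypothesis $\varphi^n V\subset W$ with $\dim_k W = m$ finite. First I would observe that for every $r$ the endomorphism $\Lambda^r\varphi$ satisfies $(\Lambda^r\varphi)^n = \Lambda^r(\varphi^n)$, so the image $(\Lambda^r\varphi)^n(\Lambda^r V)$ is spanned by elements $\varphi^n(v_1)\wedge\cdots\wedge\varphi^n(v_r)$, all of whose factors lie in $W$; hence $(\Lambda^r\varphi)^n(\Lambda^r V)\subseteq \Lambda^r W$. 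In particular $\Lambda^r\varphi$ is finite potent (which also reproves the remark preceding the lemma).

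Next, since $\varphi^n V\subseteq W$ but $\varphi$ need not preserve $W$, I would pass to a subspace that $\varphi$ does stabilize. The clean choice is $W' := \varphi^n V$ itself: it is finite dimensional (of dimension at most $m$), it is $\varphi$-invariant because $\varphi W' = \varphi^{n+1}V\subseteq \varphi^n V = W'$, and $(\Lambda^r\varphi)^n(\Lambda^r V)\subseteq \Lambda^r W'$ by the computation above. So $\Lambda^r W'$ is a finite-dimensional $(\Lambda^r\varphi)$-invariant subspace containing a power of $\Lambda^r\varphi$ applied to everything, and it suffices to show that $\Lambda^r\varphi$ restricted to $\Lambda^r W'$ is nilpotent.

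This last point is a standard fact about finite-dimensional linear algebra. Writing $\psi := \varphi_{\vert W'}\in\ed_k(W')$ with $\dim_k W' = m' \le m$, we have $\Lambda^r W' = 0$ whenever $r > m'$, so $\Lambda^r\psi$ acts on the zero space and is trivially nilpotent. Since $m'\le m = \dim_k W$, this covers all $r > \dim_k W$. Combining: for $r > \dim_k W$, $(\Lambda^r\varphi)^n(\Lambda^r V)\subseteq \Lambda^r W' = 0$, so $(\Lambda^r\varphi)^n = 0$ and $\Lambda^r\varphi$ is nilpotent.

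I do not foresee a genuine obstacle here; the lemma is essentially bookkeeping. The one point that deserves care is not confusing $W$ (merely a finite-dimensional space swallowing $\varphi^n V$, not assumed $\varphi$-invariant) with the $\varphi$-invariant space $W' = \varphi^n V$ one actually works with; once that substitution is made, the bound $r > \dim_k W \ge \dim_k W'$ forces $\Lambda^r W' = 0$ and the argument closes immediately.
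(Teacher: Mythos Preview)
Your argument is correct. The paper does not actually prove this lemma; it is stated immediately after the remark ``it is clear that'', so there is no proof to compare against. Your line of reasoning is the natural one: $(\Lambda^r\varphi)^n=\Lambda^r(\varphi^n)$ has image contained in $\Lambda^r W$, and $\Lambda^r W=0$ once $r>\dim_k W$. Note that your detour through $W'=\varphi^n V$ is unnecessary: you do not need $W$ to be $\varphi$-invariant, because you are not arguing via an invariant subspace on which $\Lambda^r\varphi$ is nilpotent, but simply observing that $(\Lambda^r\varphi)^n$ already lands in $\Lambda^r W=0$. Your own first paragraph already contains the full proof.
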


    Moreover, an easy computation on finite dimensional vector
spaces shows that:

\begin{lem} \label{l:fin-end} Let $W$ be a finite dimensional $k$-vector space and
let $\phi \in \ed_k(W)$. One has that:
$$\Det(1 + \phi) = 1 + \sum_{r=1}^{\text{dim } W} \tr [\bigwedge^r
\phi]\, ,$$\noindent where $\tr$ denotes the ordinary trace.
\end{lem}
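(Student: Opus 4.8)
The plan is to reduce everything to the classical linear-algebra identity expressing the coefficients of the characteristic polynomial as traces of exterior powers, and to prove it by a direct multilinear expansion that is valid over an arbitrary field (so that no hypothesis on $\operatorname{char} k$ is needed). Fix a basis $e_1,\dots,e_n$ of $W$, $n=\dim_k W$, and write $\phi=(\phi_{ij})$ in this basis. The first key step is to identify, for each $r$, the trace $\tr[\Lambda^r\phi]$ with the sum of the principal $r\times r$ minors of $\phi$: in the basis $\{e_S:=e_{i_1}\wedge\cdots\wedge e_{i_r}\mid S=\{i_1<\cdots<i_r\}\}$ of $\Lambda^r V$, one expands $[\Lambda^r\phi](e_S)=\phi(e_{i_1})\wedge\cdots\wedge\phi(e_{i_r})$ using $\phi(e_{i_a})=\sum_b\phi_{b i_a}e_b$, and reads off that the coefficient of $e_S$ in $[\Lambda^r\phi](e_S)$ is exactly $\det(\phi_S)$, where $\phi_S$ is the submatrix of $\phi$ with rows and columns indexed by $S$. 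Summing the diagonal entries gives $\tr[\Lambda^r\phi]=\sum_{|S|=r}\det(\phi_S)$.

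The second key step is to expand $\Det(1+\phi)$ by multilinearity of the determinant in the columns of $1+\phi$. The $j$-th column of $1+\phi$ is $e_j+\sum_b\phi_{bj}e_b$, so expanding the determinant as a sum over the $2^n$ ways of choosing, for each column $j$, either the vector $e_j$ or the vector $\sum_b\phi_{bj}e_b$, one obtains a sum indexed by subsets $T\subseteq\{1,\dots,n\}$, the term for $T$ being the determinant of the matrix whose $j$-th column is $e_j$ for $j\notin T$ and $\sum_b\phi_{bj}e_b$ for $j\in T$. Cofactor-expanding along the unit columns $e_j$, $j\notin T$ (which simultaneously deletes rows and columns outside $T$, with the reordering signs cancelling in pairs), identifies that term with $\det(\phi_T)$. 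Hence $\Det(1+\phi)=\sum_{r=0}^{n}\sum_{|T|=r}\det(\phi_T)$, the $r=0$ term being $1$ (empty product), and combining with the first step yields $\Det(1+\phi)=1+\sum_{r=1}^{n}\tr[\Lambda^r\phi]$, which is the claim.

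The step I expect to require the most care is the sign bookkeeping in the multilinear expansion of $\Det(1+\phi)$: one must check that picking the unit vector $e_j$ in column $j$ for $j\notin T$ really contributes the principal minor $\det(\phi_T)$ with coefficient $+1$, i.e. that the permutation/transposition signs introduced when shuffling the unit columns into place exactly cancel against those introduced when deleting the corresponding rows. This is elementary but is the only place where an error could creep in; it can be made painless by noting that after a common permutation of rows and columns the matrix becomes block upper (in fact block diagonal) of the form $\begin{pmatrix} I & * \\ 0 & \phi_T\end{pmatrix}$, whose determinant is $\det(\phi_T)$. A conceptual alternative, worth recording as a remark, is to pass to $\bar k$ (legitimate since both sides are polynomials in the $\phi_{ij}$ with coefficients in the prime field), triangularize $\phi$ with diagonal eigenvalues $\lambda_1,\dots,\lambda_n$, observe that $\Lambda^r\phi$ is then triangular with diagonal entries $\lambda_{i_1}\cdots\lambda_{i_r}$, so $\tr[\Lambda^r\phi]=e_r(\lambda_1,\dots,\lambda_n)$, and conclude from $\Det(1+\phi)=\prod_i(1+\lambda_i)=\sum_{r=0}^n e_r(\lambda)$.
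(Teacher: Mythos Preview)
Your proof is correct. The paper itself supplies no proof of this lemma, merely asserting that ``an easy computation on finite dimensional vector spaces shows'' the identity, so there is no argument in the paper to compare against. Your route via the multilinear column expansion of $\Det(1+\phi)$ into principal minors, combined with the identification $\tr[\Lambda^r\phi]=\sum_{|S|=r}\Det(\phi_S)$, is a standard and fully rigorous derivation valid over any field, and the sign bookkeeping you flag is handled cleanly by the block-triangular observation. The eigenvalue alternative you sketch at the end is essentially the computation the paper later carries out in Proposition~\ref{prop:eigen} and the remark following Corollary~\ref{cor:eigen} (there in the reverse direction, deducing $\tr_{\Lambda^r V}[\Lambda^r\varphi]=S_r(\lambda_1,\dots,\lambda_N)$ from the determinant formula), so either approach sits comfortably within the paper's framework.
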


    Let $\tr_V$ be the trace defined in \cite{Ta} (see also Section \ref{s:pre}) for finite
potent endomorphisms.

\begin{prop} \label{p:deter-char} If $\varphi \in \ed_k(V)$ is finite potent, then:
$$\Det^k_V(1 + \varphi) = 1 + \sum_{r\geq 1} \tr_{\bigwedge^r V} [\bigwedge^r
\varphi]\, .$$
\end{prop}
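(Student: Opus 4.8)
The plan is to reduce everything to the finite-dimensional case via the characterizing properties (1)–(3) of $\Det^k_V$, using the fact that $\Lambda^r\varphi$ is nilpotent for $r$ large (Lemma \ref{l:cote}) so that the sum on the right is actually finite. First I would fix a finite-dimensional subspace $W\subset V$ with $\varphi W\subset W$ and $\varphi^nV\subset W$ for some $n$; such a $W$ exists because $\varphi$ is finite potent (take $W=\varphi^nV$, or use the AST-decomposition $W_\varphi$). By the characterization recalled after the properties, the left-hand side is $\Det^k_V(1+\varphi)=\Det(1+\varphi_{\vert_W})$, the ordinary determinant on the finite-dimensional space $W$.

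Next I would handle the right-hand side. For each $r$, the inclusion $W\hookrightarrow V$ induces $\Lambda^rW\hookrightarrow\Lambda^rV$, and this subspace is $\Lambda^r\varphi$-invariant with $(\Lambda^r\varphi)^m(\Lambda^rV)\subset\Lambda^rW$ for suitable $m$ (since $\varphi^nV\subset W$ forces $(\Lambda^r\varphi)^n(\Lambda^rV)\subset\Lambda^rW$); hence by property (2) of Tate's trace, $\tr_{\Lambda^rV}[\Lambda^r\varphi]=\tr_{\Lambda^rW}[(\Lambda^r\varphi)_{\vert_{\Lambda^rW}}]=\tr[\Lambda^r(\varphi_{\vert_W})]$, the ordinary trace, which vanishes for $r>\dim_kW$ by Lemma \ref{l:cote}. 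Therefore
$$1+\sum_{r\geq1}\tr_{\Lambda^rV}[\Lambda^r\varphi]=1+\sum_{r=1}^{\dim_kW}\tr[\Lambda^r(\varphi_{\vert_W})].$$

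Finally I would invoke Lemma \ref{l:fin-end} applied to $\phi=\varphi_{\vert_W}\in\ed_k(W)$, which says exactly that this last expression equals $\Det(1+\varphi_{\vert_W})$. Chaining the three identities gives $\Det^k_V(1+\varphi)=1+\sum_{r\geq1}\tr_{\Lambda^rV}[\Lambda^r\varphi]$, as desired. I expect the only real point needing care is the compatibility $\tr_{\Lambda^rV}[\Lambda^r\varphi]=\tr[\Lambda^r(\varphi_{\vert_W})]$: one must check that restricting $\varphi$ to $W$ and then taking $\Lambda^r$ agrees with restricting $\Lambda^r\varphi$ to $\Lambda^rW$ (immediate from the definition $[\Lambda^r\varphi](v_1\wedge\cdots\wedge v_r)=\varphi(v_1)\wedge\cdots\wedge\varphi(v_r)$), and that property (2) of Tate's trace legitimately collapses the trace on $\Lambda^rV$ to the trace on $\Lambda^rW$ — which it does, because $\Lambda^rW$ is finite-dimensional, invariant, and captures a power of $\Lambda^r\varphi$, so the trace on the nilpotent quotient $\Lambda^rV/\Lambda^rW$ is zero by property (3).
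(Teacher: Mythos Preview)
Your proof is correct and follows essentially the same route as the paper: reduce both sides to a finite-dimensional $\varphi$-invariant subspace $W$ with $\varphi^nV\subset W$, use that $(\Lambda^r\varphi)^n(\Lambda^rV)\subset\Lambda^rW$ to collapse each Tate trace to an ordinary trace on $\Lambda^rW$, invoke Lemma~\ref{l:cote} to see the sum is finite, and conclude with Lemma~\ref{l:fin-end}. Your write-up is in fact more explicit than the paper's about why the trace reduction is legitimate (via properties (2) and (3) of Tate's trace on the quotient $\Lambda^rV/\Lambda^rW$).
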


\begin{proof} Since $\varphi$ is finite potent, if $\varphi^n V \subset W$ for some positive integer $n$, then
$\tr_V(\varphi) = \tr_W(\varphi)$. Similarly, for all $r\geq 1$
one has that $[\bigwedge^r \varphi]^n V \subset \bigwedge^r W$ for the
same $n$, and hence $\tr_{\bigwedge^r V} [\bigwedge^r \varphi] =
\tr_{\bigwedge^r W} [\bigwedge^r \varphi]$.

    Thus, the claim is deduced immediately from the definition of
the determinant $\Det^k_V(1 + \varphi)$ and the statement of Lemma
\ref{l:fin-end}, and the expression makes sense bearing in mind
Lemma \ref{l:cote} (the trace of a nilpotent endomorphism is
zero).
\end{proof}

    Analogously, we have that:

\begin{cor} \label{cor:deter-char} If $\mu \in k$ and $\varphi \in \ed_k(V)$ is finite potent, then:
$$\Det^k_V(1 + \mu \varphi) = 1 + \sum_{r\geq 1} \mu^r \tr_{\bigwedge^r V} [\bigwedge^r
\varphi]\, .$$
\end{cor}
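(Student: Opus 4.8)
The plan is to reduce the statement to Proposition \ref{p:deter-char}, applied not to $\varphi$ but to the endomorphism $\mu\varphi$. First I would check that $\mu\varphi$ is again finite potent: if $\mu = 0$ then $\mu\varphi$ is the zero endomorphism, which is nilpotent, and if $\mu\neq 0$ then $(\mu\varphi)^n V = \mu^n\varphi^n V = \varphi^n V$, which is finite dimensional as soon as $\varphi^n V$ is. Hence Proposition \ref{p:deter-char} gives
$$\Det^k_V(1 + \mu\varphi) = 1 + \sum_{r\geq 1}\tr_{\Lambda^r V}[\Lambda^r(\mu\varphi)]\,,$$
and by Lemma \ref{l:cote} this sum has only finitely many nonzero terms.

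The next step is the elementary identity $\Lambda^r(\mu\varphi) = \mu^r\,\Lambda^r\varphi$ in $\ed_k(\Lambda^r V)$. Evaluating on a decomposable element,
$$[\Lambda^r(\mu\varphi)](v_1\wedge\cdots\wedge v_r) = (\mu\varphi)(v_1)\wedge\cdots\wedge(\mu\varphi)(v_r) = \mu^r\,\varphi(v_1)\wedge\cdots\wedge\varphi(v_r) = \mu^r\,[\Lambda^r\varphi](v_1\wedge\cdots\wedge v_r)\,,$$
and since decomposable elements span $\Lambda^r V$ the two endomorphisms coincide. It then remains to pull the scalar $\mu^r$ through Tate's trace, i.e. to show $\tr_{\Lambda^r V}[\mu^r\,\Lambda^r\varphi] = \mu^r\,\tr_{\Lambda^r V}[\Lambda^r\varphi]$. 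This is homogeneity of the generalized trace: it follows either from property (4) of $\tr_V$ in Section \ref{s:pre} applied to the finite potent subspace $k\cdot\Lambda^r\varphi$ of $\ed_k(\Lambda^r V)$, or, more directly, from the characterization of $\tr_V$ by properties (1)--(3), by computing both sides on a common finite-dimensional invariant subspace $W = [\Lambda^r\varphi]^n(\Lambda^r V)$ (which also absorbs $[\mu^r\Lambda^r\varphi]^n(\Lambda^r V)$), where $\tr$ is the ordinary trace and scalar homogeneity is clear. Substituting back yields $\Det^k_V(1 + \mu\varphi) = 1 + \sum_{r\geq 1}\mu^r\,\tr_{\Lambda^r V}[\Lambda^r\varphi]$, as claimed.

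I do not anticipate any real obstacle; the proof is a routine consequence of Proposition \ref{p:deter-char}. The only point deserving an explicit word is the homogeneity of Tate's trace under scalar multiplication, which is not literally one of the listed properties but is immediate from either of the two arguments sketched above.
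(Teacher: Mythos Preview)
Your proposal is correct and matches the paper's intent: the corollary is stated immediately after Proposition~\ref{p:deter-char} with only the word ``Analogously'' and no further proof, and your argument---apply Proposition~\ref{p:deter-char} to $\mu\varphi$, use $\Lambda^r(\mu\varphi)=\mu^r\Lambda^r\varphi$, and pull the scalar through Tate's trace---is precisely the routine verification that fills this in. The one point you flag (homogeneity of $\tr_V$) is indeed not listed verbatim among properties (1)--(5), but both of your justifications are sound, and the first (property~(4) on the one-dimensional finite potent subspace $k\cdot\Lambda^r\varphi$) is the cleanest.
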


\begin{lem} \label{l:propret2} 
 For each finite potent endomorphism $\varphi$ and for all automorphism $\phi \in \aut_k (V)$, one has
that:
$$\Det^k_V(1 + \phi \varphi \phi^{-1}) = \Det^k_V(1 + \varphi)\, .$$
\end{lem}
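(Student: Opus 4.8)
The plan is to use the characterization of $\Det^k_V$ via traces on exterior powers, namely Proposition \ref{p:deter-char}, together with the conjugation-invariance of Tate's trace. Concretely, I would write
$$\Det^k_V(1 + \phi \varphi \phi^{-1}) = 1 + \sum_{r\geq 1} \tr_{\Lambda^r V}\bigl[\Lambda^r(\phi \varphi \phi^{-1})\bigr],$$
which is legitimate because $\phi\varphi\phi^{-1}$ is again finite potent: indeed if $\varphi^n V \subset W$ with $W$ finite dimensional, then $(\phi\varphi\phi^{-1})^n V = \phi\varphi^n\phi^{-1}V = \phi\varphi^n V \subset \phi(W)$, which is finite dimensional. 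So the right-hand side is a well-defined finite sum in each degree modulo nilpotence, by Lemma \ref{l:cote}.

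The next step is the functoriality of exterior powers: for any $\phi \in \aut_k(V)$ one has $\Lambda^r(\phi\varphi\phi^{-1}) = (\Lambda^r\phi)(\Lambda^r\varphi)(\Lambda^r\phi)^{-1}$ as endomorphisms of $\Lambda^r V$, and $\Lambda^r\phi \in \aut_k(\Lambda^r V)$. Hence $\Lambda^r(\phi\varphi\phi^{-1})$ is conjugate to $\Lambda^r\varphi$ inside $\ed_k(\Lambda^r V)$.

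It then remains to observe that Tate's trace is invariant under conjugation by an automorphism. This follows from property (5) of $\tr_V$ recalled in Section \ref{s:pre}: taking $f = \psi\colon V' = V \to V$ and $g = \psi^{-1}\colon V \to V' = V$ for an automorphism $\psi$ and a finite potent $\alpha$, we get $\tr_V(\psi \alpha \psi^{-1}) = \tr_V((\psi\alpha)\psi^{-1}) = \tr_V(\psi^{-1}(\psi\alpha)) = \tr_V(\alpha)$, the middle equality being exactly property (5) (note $\psi\alpha\psi^{-1}$ is finite potent, as shown above, so both sides make sense). Applying this in $\Lambda^r V$ with $\psi = \Lambda^r\phi$ and $\alpha = \Lambda^r\varphi$ gives $\tr_{\Lambda^r V}[\Lambda^r(\phi\varphi\phi^{-1})] = \tr_{\Lambda^r V}[\Lambda^r\varphi]$ for every $r \geq 1$, and summing over $r$ yields
$$\Det^k_V(1 + \phi\varphi\phi^{-1}) = 1 + \sum_{r\geq 1}\tr_{\Lambda^r V}[\Lambda^r\varphi] = \Det^k_V(1 + \varphi).$$

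I do not expect a serious obstacle here; the only point requiring a little care is making sure every trace that appears is the trace of a genuinely finite potent endomorphism (so that Tate's trace is defined and property (5) applies), but this is exactly the computation $(\phi\varphi\phi^{-1})^n V \subset \phi(W)$ above, carried out also on each $\Lambda^r V$. An alternative, more elementary route avoiding the exterior-algebra formula would be to reduce directly to the finite-dimensional case: pick $W$ finite dimensional with $\varphi W \subset W$ and $\varphi^n V \subset W$, note that $\phi(W)$ then plays the same role for $\phi\varphi\phi^{-1}$, so both determinants equal ordinary determinants of conjugate matrices on $W$ and $\phi(W)$ respectively — but the restricted maps need not be literally conjugate on the nose unless one chooses the finite-dimensional subspace compatibly, so the exterior-power argument is cleaner and is the one I would write up.
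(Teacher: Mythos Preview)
Your proof is correct and follows essentially the same route as the paper's: both invoke Proposition~\ref{p:deter-char}, use the functoriality $\Lambda^r(\phi\varphi\phi^{-1}) = (\Lambda^r\phi)(\Lambda^r\varphi)(\Lambda^r\phi)^{-1}$, and appeal to property~(5) of Tate's trace to obtain conjugation-invariance on each $\Lambda^r V$. One small slip: where you write ``taking $f = \psi$ and $g = \psi^{-1}$'' the labels do not match the computation you then carry out --- the correct choice (which your displayed equality in fact uses) is $f = \psi\alpha$ and $g = \psi^{-1}$, so that $fg = \psi\alpha\psi^{-1}$ and $gf = \alpha$.
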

\begin{proof}
First, we should recall from \cite{Ta}
-see also Subsection (\ref{s:pre})- that ``if $f\colon
V'\to V$ and $g\colon V\to V'$ are $k$-linear and $f g$ is finite
potent, then $g f$ is finite potent, and $\tr_V (f g) = \tr_{V'}
(g f)$''. Hence for each finite potent endomorphism $\varphi$ and
for all automorphism $\phi$, since $\varphi = \phi^{-1} (\phi
\varphi)$ is finite potent, $\phi \varphi \phi^{-1}$ is also
finite potent and we have:
 $$\tr_V (\phi \varphi \phi^{-1}) = \tr_V(\varphi)\, .$$
Similarly, one has that:
 $$\tr_{\bigwedge^r V} [\bigwedge^r (\phi \varphi \phi^{-1})] = \tr_{\bigwedge^r V} [(\bigwedge^r\phi)
(\bigwedge^r\varphi) (\bigwedge^r \phi^{-1})] = \tr_{\bigwedge^r V}
(\bigwedge^r\varphi)$$
\noindent for all $r>1$, and it follows from
Proposition \ref{p:deter-char} that:
$$\begin{aligned}\Det^k_V(1 + \phi \varphi \phi^{-1}) &=  1 + \sum_{r\geq 1} \tr_{\bigwedge^r V} [\bigwedge^r
(\phi \varphi \phi^{-1})]\\  &= 1 + \sum_{r\geq 1} \tr_{\bigwedge^r
V} [\bigwedge^r \varphi]= \Det^k_V(1 + \varphi)\, .\end{aligned}$$
\end{proof}

A direct consequence of the previous Lemma and of
Proposition \ref{p:deter-char} is as follows:
\begin{cor} If $F\subseteq \ed (V)$ is a finite potent subspace
and $\varphi, \psi \in F$, then $\varphi\psi$ and $\psi\varphi$
are finite potent endomorphisms satisfying the condition that:
$$\sum_{r\geq 1} \tr_{\bigwedge^r V} [\bigwedge^r
(\varphi\psi)] = \sum_{r\geq 1} \tr_{\bigwedge^r V} [\bigwedge^r
(\psi\varphi)] = \sum_{i,j\geq 1} \big (\tr_{\bigwedge^i V}
[\bigwedge^i \varphi]\big )\cdot  \big (\tr_{\bigwedge^j V} [\bigwedge^j
\psi]\big )\, .$$
\end{cor}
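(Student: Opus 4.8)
The plan is to treat the two equalities separately: the first one follows directly from Tate's trace identity applied to exterior powers, and the second one from a reduction to a finite-dimensional $\varphi,\psi$-invariant subspace of $V$, where it becomes the identity of Lemma \ref{l:propret} combined with the classical $\Det(I+AB)=\Det(I+BA)$ and the expansion of $\Det(I+\bullet)$ in exterior-power traces (Lemma \ref{l:fin-end}). I begin by recording that $\varphi\psi$ and $\psi\varphi$ are finite potent: if $n$ witnesses that $F$ is a finite potent subspace, then $(\varphi\psi)^n=\theta_1\cdots\theta_{2n}$ with every $\theta_i\in\{\varphi,\psi\}\subseteq F$, and grouping $(\varphi\psi)^n V=\theta_1\cdots\theta_n\big(\theta_{n+1}\cdots\theta_{2n}V\big)$ exhibits it as finite dimensional; likewise for $\psi\varphi$. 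By the general fact preceding Lemma \ref{l:cote}, $\Lambda^r(\varphi\psi)$ and $\Lambda^r(\psi\varphi)$ are then finite potent for every $r$, and by Lemma \ref{l:cote} each of the three series in the statement has only finitely many nonzero terms, so no convergence issue arises.

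For the first equality I would apply property (5) of the trace recalled in Section \ref{s:pre} to $f=\Lambda^r\varphi$ and $g=\Lambda^r\psi$ on $\Lambda^r V$: since $\Lambda^r(\varphi\psi)=(\Lambda^r\varphi)(\Lambda^r\psi)$ is finite potent, one gets $\tr_{\Lambda^r V}[\Lambda^r(\varphi\psi)]=\tr_{\Lambda^r V}[(\Lambda^r\varphi)(\Lambda^r\psi)]=\tr_{\Lambda^r V}[(\Lambda^r\psi)(\Lambda^r\varphi)]=\tr_{\Lambda^r V}[\Lambda^r(\psi\varphi)]$ for every $r$, and summing over $r\ge1$ yields $\sum_{r\ge1}\tr_{\Lambda^r V}[\Lambda^r(\varphi\psi)]=\sum_{r\ge1}\tr_{\Lambda^r V}[\Lambda^r(\psi\varphi)]$. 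This is the non-invertible incarnation of the conjugation invariance of Lemma \ref{l:propret2}.

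For the second equality I would pass to finite dimensions. Take $W=<\varphi,\psi>^n V$ exactly as in the proof of Lemma \ref{l:propret}: it is finite dimensional, invariant under $\varphi$ and $\psi$ — hence under $\varphi\psi$, $\psi\varphi$ and $\varphi+\psi+\varphi\psi$ — and it absorbs $\theta^n V$ for each of these endomorphisms $\theta$. Writing $A=\varphi_{\vert_W}$ and $B=\psi_{\vert_W}$ in $\ed_k(W)$, the characterisation of $\Det^k_V$ recalled at the start of this subsection gives $\Det^k_V(1+\theta)=\Det^k_W(1+\theta_{\vert_W})$, and the argument in the proof of Proposition \ref{p:deter-char} (using that $\Lambda^r W$ is $\Lambda^r\theta$-invariant and contains $[\Lambda^r\theta]^n\Lambda^r V=\Lambda^r(\theta^n V)$) gives $\tr_{\Lambda^r V}[\Lambda^r\theta]=\tr_{\Lambda^r W}[\Lambda^r(\theta_{\vert_W})]$ for every such $\theta$ and all $r$. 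Then Lemma \ref{l:fin-end} rewrites $\sum_{r\ge1}\tr_{\Lambda^r V}[\Lambda^r(\varphi\psi)]=\Det(I_W+AB)-1$ and $\sum_{i,j\ge1}(\tr_{\Lambda^i V}[\Lambda^i\varphi])(\tr_{\Lambda^j V}[\Lambda^j\psi])=\big(\Det(I_W+A)-1\big)\big(\Det(I_W+B)-1\big)$, and these are matched by combining $\Det(I_W+AB)=\Det(I_W+BA)$ with Lemma \ref{l:propret} applied inside $W$, which reads $\Det(I_W+A)\,\Det(I_W+B)=\Det(I_W+A+B+AB)$.

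I expect the only genuine difficulty to be organisational. One has to commit to a single finite-dimensional invariant subspace $W$ serving $\varphi$, $\psi$, $\varphi\psi$, $\psi\varphi$ and $\varphi+\psi+\varphi\psi$ simultaneously, so that every reduction ``$\Det^k_V=\Det^k_W$'' and ``$\tr_{\Lambda^r V}=\tr_{\Lambda^r W}$'' is legitimate for the \emph{same} $W$; and in the expansion one must keep track of which summation ranges include the index $0$, those being the constants $1$ that appear and then cancel against the linear contributions $\sum_{i\ge1}\tr_{\Lambda^i V}[\Lambda^i\varphi]$ and $\sum_{j\ge1}\tr_{\Lambda^j V}[\Lambda^j\psi]$. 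Lemma \ref{l:cote} removes any analytic subtlety, so this bookkeeping is essentially the whole argument.
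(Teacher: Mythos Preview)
Your argument for the first equality is correct and matches the spirit of the paper's one-line indication: apply Tate's property~(5) to $f=\Lambda^r\varphi$, $g=\Lambda^r\psi$ on $\Lambda^r V$ and sum over~$r$.

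The second equality, however, has a genuine gap. You correctly reduce to a common finite-dimensional $\varphi,\psi$-invariant subspace $W$ and correctly identify the two sides as $\Det(I_W+AB)-1$ and $(\Det(I_W+A)-1)(\Det(I_W+B)-1)$, where $A=\varphi_{\vert_W}$ and $B=\psi_{\vert_W}$. But your concluding step---``these are matched by combining $\Det(I_W+AB)=\Det(I_W+BA)$ with Lemma~\ref{l:propret}''---does not deliver this. Lemma~\ref{l:propret} gives $\Det(I_W+A)\Det(I_W+B)=\Det(I_W+A+B+AB)$, which concerns $A+B+AB$, not $AB$ alone; and $\Det(I_W+AB)=\Det(I_W+BA)$ only interchanges the two leftmost expressions. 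No combination of these yields the identity you need. Indeed that identity is already false in finite dimensions: for $W=k^2$, $A=\begin{pmatrix}1&0\\0&0\end{pmatrix}$, $B=\begin{pmatrix}0&0\\0&1\end{pmatrix}$ one has $AB=0$, so $\Det(I_W+AB)-1=0$, while $(\Det(I_W+A)-1)(\Det(I_W+B)-1)=(2-1)(2-1)=1$. Since every subspace of $\ed_k(W)$ is finite potent when $\dim_k W<\infty$, this is a counterexample to the Corollary as written; the paper's own justification (``a direct consequence of the previous Lemma and of Proposition~\ref{p:deter-char}'') is equally insufficient, and the second equality appears to be a misprint.
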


    As in the algebraic formalism of determinants of finite-dimensional vector spaces,
to conclude this subsection we shall study the
relationship between the scalar $\Det^k_V (1 + \varphi)$ and the
``infinite wedge product'' of the elements of a basis in an
arbitrary $k$-vector space of infinite countable dimension
$V$.

\smallskip

    Writing $T^\infty V = \bigotimes_{1}^\infty V$
to denote the infinite countable tensor product, we have that
$\bigwedge^\infty V = T^\infty V / J(V)$,
where $J(V)$ is the ideal generated by those tensors such that at least two of their entries are equal. It is clear that $\bigwedge^\infty V$ is also a $k$-vector space.


    Given a basis $\{v_1, \dots , v_n, \dots\}$, one
has that: $$0 \ne v_1 \wedge \dots \wedge v_n
\wedge \dots \in \bigwedge^\infty V\, .$$

    Moreover, every endomorphism $\psi \in \ed (V)$
induces a $k$-linear map defined by:
$$\begin{aligned} \bigwedge^\infty (\psi) \colon \bigwedge^\infty
V & \longrightarrow \bigwedge^\infty V \\ {u}_1
\wedge \dots \wedge {u}_n \wedge \dots &\longmapsto \psi ({u}_1)
\wedge \dots \wedge \psi ({u}_n) \wedge \dots
\end{aligned}$$\noindent for all ${u}_1
\wedge \dots \wedge {u}_n \wedge \dots \in \bigwedge^\infty {\tilde
V}$. For each $\psi, \phi \in \ed (V)$, one has that:
$$\bigwedge^\infty (\psi \circ \phi) = \bigwedge^\infty (\psi) \circ
\bigwedge^\infty (\phi)\, .$$

    Recall now from \cite{LB} that for every  $\phi \in \ed (V)$
possessing an annihilating polynomial of an arbitrary
infinite-dimensional vector space $V$ there exists a Jordan basis
of $V$ associated with $\phi$.

\begin{prop} \label{prop:santa-catalina}  Let $V$ be a $k$-vector
space of infinite countable dimension, let $\phi\in \ed (
V)$ be a nilpotent endomorphism and let $\{w_1, \dots ,
w_n, \dots\}$ be a Jordan basis of $V$
associated with $\phi$. One has that:
$$\bigwedge^\infty (1 + \phi) \big [w_1  \wedge \dots \wedge w_n \wedge
\dots\big ] = w_1  \wedge \dots \wedge w_n
\wedge \dots\, .$$
\end{prop}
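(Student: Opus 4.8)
The plan is to compute $\Lambda^\infty(1+\phi)$ directly on the element $\omega:={\tilde w}_1\wedge\dots\wedge{\tilde w}_n\wedge\dots$, ``straightening'' it one factor at a time and using that in a Jordan basis the only correction term $\phi{\tilde w}_i$ is itself a basis vector.

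First, since $\phi$ is nilpotent, $1+\phi$ is an automorphism of ${\tilde V}$ (its inverse is $\sum_{j\ge0}(-\phi)^{j}$, a finite sum), so $\Lambda^\infty(1+\phi)$ is a well-defined automorphism of $\Lambda^\infty{\tilde V}$ with $\Lambda^\infty(1+\phi)\,\omega=(1+\phi){\tilde w}_1\wedge(1+\phi){\tilde w}_2\wedge\dots$, where $(1+\phi){\tilde w}_i={\tilde w}_i+\phi{\tilde w}_i$. The structural input is that each $\phi{\tilde w}_i$ is either $0$ or again a basis vector ${\tilde w}_{\sigma(i)}$ — the predecessor of ${\tilde w}_i$ in its Jordan chain — and that the partial map $\sigma$ is injective, fixed-point-free, and strictly decreases the position of a vector within its chain; since $\phi$ is nilpotent its chains are finite, so iterating $\sigma$ along any chain terminates (in $0$) after finitely many steps.

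Then I would prove, by induction on $i$, that $\Lambda^\infty(1+\phi)\,\omega={\tilde w}_1\wedge\dots\wedge{\tilde w}_{i-1}\wedge(1+\phi){\tilde w}_i\wedge(1+\phi){\tilde w}_{i+1}\wedge\dots$ for every $i$. If $\phi{\tilde w}_i=0$ there is nothing to do; otherwise, expanding the $i$-th factor as ${\tilde w}_i+{\tilde w}_{\sigma(i)}$ by multilinearity, the summand with ${\tilde w}_i$ in slot $i$ gives exactly the wedge asserted for $i+1$, and the remaining summand $T_i:={\tilde w}_1\wedge\dots\wedge{\tilde w}_{i-1}\wedge{\tilde w}_{\sigma(i)}\wedge(1+\phi){\tilde w}_{i+1}\wedge\dots$ vanishes: if $\sigma(i)<i$ the vector ${\tilde w}_{\sigma(i)}$ already occupies its own slot, so $T_i$ has a repeated factor; if $\sigma(i)>i$, add $-{\tilde w}_{\sigma(i)}$ (the content of slot $i$) to slot $\sigma(i)$, replacing $(1+\phi){\tilde w}_{\sigma(i)}$ by $\phi{\tilde w}_{\sigma(i)}$, and iterate. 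Chasing the chain of ${\tilde w}_i$ downwards, after finitely many such elementary moves one either produces a repetition with one of the already-fixed slots $1,\dots,i-1$ or reaches the bottom of the chain, where the slot in question becomes $0$; in all cases $T_i=0$, the induction goes through, and one concludes $\Lambda^\infty(1+\phi)\,\omega=\omega$.

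The step I expect to be the real obstacle is the passage from these finite manipulations to the full infinite wedge: one must check that every move used is a legitimate finite one (multilinearity in a single factor, antisymmetry, vanishing of a wedge with a repeated entry) — in particular that the successive ``pivot'' slots used while killing $T_i$ are pairwise distinct and have not yet been altered — and one must justify deducing $\Lambda^\infty(1+\phi)\,\omega=\omega$ from the fact that $\Lambda^\infty(1+\phi)\,\omega$ agrees with ${\tilde w}_1\wedge\dots\wedge{\tilde w}_n\wedge\dots$ ``on every finite prefix'', since the infinite product cannot simply be expanded into a sum. An alternative, essentially equivalent, route is to decompose ${\tilde V}=\bigoplus_\alpha C_\alpha$ into its finite-dimensional $\phi$-invariant Jordan chains, observe that $1+\phi$ is unipotent on each $C_\alpha$ so $\Det\big((1+\phi)|_{C_\alpha}\big)=1$, and then transfer this through the behaviour of $\Lambda^\infty$ under the decomposition; the same care about going from the finite pieces to the infinite wedge is needed there.
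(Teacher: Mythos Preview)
Your alternative route at the end is precisely the paper's proof. The paper writes the Jordan basis as the disjoint union of finite chains $\{\tilde v_i,\phi(\tilde v_i),\dots,\phi^{\mu(i)}(\tilde v_i)\}$, notes that on each chain
\[
\big([1+\phi]\tilde v_i\big)\wedge\big([1+\phi]\phi(\tilde v_i)\big)\wedge\dots\wedge\big([1+\phi]\phi^{\mu(i)}(\tilde v_i)\big)=\tilde v_i\wedge\phi(\tilde v_i)\wedge\dots\wedge\phi^{\mu(i)}(\tilde v_i)
\]
(since $(1+\phi)$ restricted to the chain is unipotent upper--triangular), and then simply says the claim follows ``recurrently''. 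That is the whole argument.

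Your main slot--by--slot approach is correct but considerably more laborious than what the paper does: instead of straightening one basis vector at a time and chasing the error term $T_i$ down its chain, the paper straightens an entire chain in one stroke via a single finite exterior-algebra identity. The advantage of the chain-by-chain version is that the blocks are $\phi$-invariant, so the identity on each block is a genuine equality in a finite exterior power, with no need to track which ``pivot slots'' have or have not been altered.

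On the point you flag as the real obstacle --- passing from the finite manipulations to the infinite wedge --- you are being more scrupulous than the paper: its ``recurrently'' sweeps under the rug exactly the issue you raise, namely that infinitely many blocks must be fixed and the definition of $\Lambda^\infty\tilde V=T^\infty\tilde V/J(\tilde V)$ given is terse. So your concern is legitimate, but it is not a gap relative to the paper's own standard of rigor; if anything, grouping by chains (your alternative, the paper's choice) makes that passage most palatable, since each replacement is an honest finite-dimensional identity localized to a finite set of slots.
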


\begin{proof} Since $\phi$ is nilpotent, if $\phi^N = 0$, then a Jordan basis of
$V$ for $\phi$ is: $$\bigcup_{i=1}^\infty \{v_i,
\phi (v_i), \dots , \phi^{\mu (i)} (v_i)\}\,
,$$\noindent with $\mu (i) < N$ and $\phi^{\mu (i) + 1} ({v}_i) = 0$ (notice that when running over all $i$, the vectors $v_i, \phi (v_i), \dots , \phi^{\mu (i)} (v_i)$ are precisely the vectors $w_{i}$ of the statement). 

    Bearing in mind that: $$\big ([1 + \phi](v_i)\big ) \wedge \big ([1 + \phi](\phi (v_i))\big )
\wedge \dots \wedge \big ([1 + \phi](\phi^{\mu (i)} ({v}_i))\big ) = v_i \wedge \phi (v_i)\wedge \dots
\wedge \phi^{\mu (i)} (v_i)\,,$$\noindent and using the same argument for all $i$, we deduce the claim.
\end{proof}

    Let us now consider again a finite potent endomorphism
$\varphi$ of an arbitrary $k$-vector space $V$ of infinite
countable dimension with AST-decomposition $V = {W}_\varphi \oplus {U}_\varphi$. Since $\varphi$ admits an
annihilating polynomial, then there exists a Jordan basis $\{{w}_1^\varphi, \dots , w_N^\varphi, {w}_{N+1}^\varphi \dots\}$, where $\{w_1^\varphi, \dots ,
w_N^\varphi\}$ is a Jordan basis of ${W}_\varphi$
for $\varphi_{\vert_{{W}_\varphi}}$, and $\{{w}_{N+1}^\varphi, \dots, w_{N+s}^\varphi, \dots\}$ is a
Jordan basis of the $k$-vector space ${U}_\varphi$ for the
nilpotent endomorphism $\varphi_{\vert_{{U}_\varphi}}$.
Note that ${U}_\varphi$ also has infinite countable
dimension.

    Thus, it follows from the definition of determinant $\Det^k_V (1 +
\varphi)$ and the statement of Proposition
\ref{prop:santa-catalina} the following generalization of a well known result in the finite dimensional case:

\begin{prop} \label{prop:santa} With the previous notation, if $\varphi$ is a finite potent endomorphism
of an arbitrary $k$-vector space $V$ of infinite countable
dimension, one has that: $$\bigwedge^\infty(1 + \varphi) \big
[w_1^\varphi \wedge \dots \wedge w_N^\varphi
\wedge w_{N+1}^\varphi \wedge \dots \big ] = \Det^k_V (1
+ \varphi) \cdot \big [w_1^\varphi \wedge \dots \wedge
w_N^\varphi \wedge w_{N+1}^\varphi \wedge \dots
\big ]\, .$$
\end{prop}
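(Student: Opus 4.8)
The plan is to reduce everything to the two cases already handled, namely the finite-dimensional part and the nilpotent part, and then to combine them using the multiplicativity of $\Lambda^\infty$ under composition. First I would split the chosen Jordan basis as $\{{\tilde w}_1^\varphi, \dots, {\tilde w}_N^\varphi\}$ for ${\tilde W}_\varphi$ and $\{{\tilde w}_{N+1}^\varphi, \dots\}$ for ${\tilde U}_\varphi$, write $b = {\tilde w}_1^\varphi \wedge \dots \wedge {\tilde w}_N^\varphi \wedge {\tilde w}_{N+1}^\varphi \wedge \dots \in \Lambda^\infty {\tilde V}$, and observe that because $V = {\tilde W}_\varphi \oplus {\tilde U}_\varphi$ is $\varphi$-invariant, the operator $1+\varphi$ preserves each summand, so $\Lambda^\infty(1+\varphi)$ acts on $b$ ``block by block'': it rearranges and operates on the first $N$ factors according to $(1+\varphi_{\vert_{{\tilde W}_\varphi}})$ and on the remaining factors according to $(1+\varphi_{\vert_{{\tilde U}_\varphi}})$.

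Next I would apply Proposition \ref{prop:santa-catalina} directly to the nilpotent endomorphism $\varphi_{\vert_{{\tilde U}_\varphi}}$ on the infinite-countable-dimensional space ${\tilde U}_\varphi$ with its Jordan basis $\{{\tilde w}_{N+1}^\varphi, \dots\}$; this gives that the ``tail'' wedge ${\tilde w}_{N+1}^\varphi \wedge {\tilde w}_{N+2}^\varphi \wedge \dots$ is fixed by $\Lambda^\infty(1+\varphi_{\vert_{{\tilde U}_\varphi}})$. For the finite ``head'', $\varphi_{\vert_{{\tilde W}_\varphi}}$ is an endomorphism of the $N$-dimensional space ${\tilde W}_\varphi$, so by the ordinary theory of determinants
\[
(1+\varphi)({\tilde w}_1^\varphi) \wedge \dots \wedge (1+\varphi)({\tilde w}_N^\varphi) = \Det_{{\tilde W}_\varphi}^k(1+\varphi_{\vert_{{\tilde W}_\varphi}}) \cdot \big({\tilde w}_1^\varphi \wedge \dots \wedge {\tilde w}_N^\varphi\big).
\]
Combining the two computations inside $\Lambda^\infty {\tilde V}$ yields
\[
\Lambda^\infty(1+\varphi)[b] = \Det_{{\tilde W}_\varphi}^k(1+\varphi_{\vert_{{\tilde W}_\varphi}}) \cdot b,
\]
and since by the characterization of $\Det^k_V$ recalled at the start of this subsection one has $\Det^k_V(1+\varphi) = \Det^k_{{\tilde W}_\varphi}(1+\varphi_{\vert_{{\tilde W}_\varphi}})$, the claimed identity follows.

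The point that needs care — the main obstacle — is making rigorous the ``block by block'' action of $\Lambda^\infty(1+\varphi)$ on $b$: one must check that in the infinite wedge one may legitimately factor the action on the first $N$ slots from the action on the rest, i.e. that the scalar produced by expanding $(1+\varphi)({\tilde w}_1^\varphi) \wedge \dots \wedge (1+\varphi)({\tilde w}_N^\varphi)$ in terms of the ${\tilde w}_i^\varphi$ (for $i \le N$) is unaffected by the infinitely many remaining factors, which are themselves reproduced up to no scalar by Proposition \ref{prop:santa-catalina}. This is where the $\varphi$-invariance of the direct-sum decomposition is essential: it guarantees that $(1+\varphi)({\tilde w}_i^\varphi)$ for $i \le N$ lies entirely in ${\tilde W}_\varphi$ and hence expands only in the head basis vectors, so no cross terms between head and tail arise and the two scalars simply multiply. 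Once this bookkeeping is set up, the rest is a direct appeal to Lemma \ref{l:fin-end} (in its determinant form) and Proposition \ref{prop:santa-catalina}.
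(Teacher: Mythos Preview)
Your proposal is correct and follows essentially the same route as the paper, which simply states that the proposition ``follows from the definition of the determinant $\Det^k_V(1+\varphi)$ and the statement of Proposition \ref{prop:santa-catalina}.'' Your argument is a faithful unpacking of that one-line justification: the finite head contributes $\Det^k_{{\tilde W}_\varphi}(1+\varphi_{\vert_{{\tilde W}_\varphi}})$ by the ordinary determinant, the infinite tail is fixed by Proposition \ref{prop:santa-catalina}, and the $\varphi$-invariance of the AST-decomposition is what prevents cross terms (the one point the paper leaves implicit and you rightly flag). The reference to Lemma \ref{l:fin-end} at the end is not really needed---the head computation is just the elementary fact that $\Lambda^N$ of an endomorphism of an $N$-dimensional space is multiplication by its determinant---but this does not affect correctness.
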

    Accordingly, the relationship between
the scalar $\Det^k_V (1 + \varphi)$ and the ``infinite wedge
product'' of the elements of an arbitrary basis of $V$ is
given by the following:

\begin{thm} \label{th:santa} If $\varphi$ is a finite potent endomorphism
of an arbitrary $k$-vector space $V$ of infinite
countable dimension and $\{v_1, \dots , v_n,
\dots\}$ is an arbitrary basis of $V$, then:
$$\bigwedge^\infty(1 + \varphi) \big
[v_1 \wedge \dots \wedge v_n \wedge \dots] =
\Det^k_V (1 + \varphi) \cdot \big [v_1 \wedge \dots
\wedge v_n \wedge \dots]\, .$$
\end{thm}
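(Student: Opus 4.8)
The plan is to reduce Theorem \ref{th:santa} to Proposition \ref{prop:santa} by showing that the scalar action of $\Lambda^\infty(1+\varphi)$ on the ray generated by a wedge $\tilde v_1 \wedge \dots \wedge \tilde v_n \wedge \dots$ does not depend on which basis we use to present that ray. Concretely, fix the Jordan basis $\{\tilde w_i^\varphi\}$ adapted to the AST-decomposition, for which Proposition \ref{prop:santa} already gives $\Lambda^\infty(1+\varphi)[\tilde w_1^\varphi \wedge \cdots] = \Det^k_V(1+\varphi)\cdot[\tilde w_1^\varphi \wedge \cdots]$. Given an arbitrary basis $\{\tilde v_i\}$, I would first observe that $\tilde v_1 \wedge \dots \wedge \tilde v_n \wedge \dots = \lambda \cdot (\tilde w_1^\varphi \wedge \dots \wedge \tilde w_n^\varphi \wedge \dots)$ for some nonzero $\lambda \in k$: indeed, each $\tilde v_i$ is a finite $k$-linear combination of the $\tilde w_j^\varphi$, and only finitely many $\tilde v_i$ involve $\tilde w_j^\varphi$ with $j$ below any fixed bound, so the change-of-basis matrix is, after reordering, block upper-triangular with finite blocks, hence its ``infinite determinant'' $\lambda$ is a well-defined finite product of the finite-block determinants. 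Since $\Lambda^\infty(1+\varphi)$ is $k$-linear, it acts by the same scalar $\Det^k_V(1+\varphi)$ on $\lambda \cdot (\tilde w_1^\varphi \wedge \cdots) = \tilde v_1 \wedge \cdots$, which is exactly the assertion.

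The key steps, in order, are: (i) recall from \cite{LB} and the discussion preceding Proposition \ref{prop:santa} that a finite potent $\varphi$ admits a Jordan basis compatible with the AST-decomposition, so Proposition \ref{prop:santa} applies to it; (ii) show that any two bases of ${\tilde V}$ induce proportional infinite wedges in $\Lambda^\infty {\tilde V}$, i.e. $\Lambda^\infty {\tilde V}$ is a one-dimensional $k$-vector space spanned by the wedge of any basis; (iii) use $k$-linearity of $\Lambda^\infty(1+\varphi)$ to transport the eigenvalue relation from the Jordan wedge to the arbitrary wedge. Step (iii) is immediate once (ii) is in place. Step (i) is already done in the text. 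So the real content is step (ii).

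The main obstacle is making step (ii) rigorous: one must check that the wedge $\tilde v_1 \wedge \dots \wedge \tilde v_n \wedge \dots$ of an \emph{arbitrary} basis is nonzero in $\Lambda^\infty {\tilde V}$ and is a nonzero scalar multiple of the wedge of any other basis. The natural argument is to pick an exhaustion: choose an increasing sequence of finite subsets $S_1 \subset S_2 \subset \dots$ of the index set with $\bigcup S_m$ equal to all indices, such that the finite-dimensional span $\langle \tilde v_i : i \in S_m\rangle$ coincides with $\langle \tilde w_j^\varphi : j \in T_m\rangle$ for a corresponding finite set $T_m$ — such compatible exhaustions exist because each basis vector lies in a finite span of the other basis. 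On each finite stage the ordinary exterior-algebra change-of-basis formula gives a nonzero determinant $\lambda_m$, and one checks the $\lambda_m$ stabilize in the relevant sense (the blocks added at stage $m+1$ are ``triangular'' relative to stage $m$, contributing only the determinant of the new block), so that the well-defined limiting scalar $\lambda = \prod_m (\text{block determinant at stage } m)$ satisfies $\tilde v_1 \wedge \dots = \lambda \cdot (\tilde w_1^\varphi \wedge \dots)$ in $\Lambda^\infty {\tilde V}$. Care is needed to formulate ``infinite product'' correctly: only finitely many of the finite blocks can fail to be identity-like after a suitable normalization of the exhaustion, or alternatively one works directly with the quotient $T^\infty {\tilde V}/J({\tilde V})$ and tracks representatives. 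Once $\lambda \in k^\times$ is produced, the conclusion $\Lambda^\infty(1+\varphi)[\tilde v_1 \wedge \cdots] = \lambda \cdot \Det^k_V(1+\varphi)\cdot[\tilde w_1^\varphi \wedge \cdots] = \Det^k_V(1+\varphi)\cdot[\tilde v_1 \wedge \cdots]$ follows by linearity.
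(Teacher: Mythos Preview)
Your step (ii) is the heart of the argument, and it is false: in $\Lambda^\infty \tilde V$ as defined in the paper, wedges of two different bases of $\tilde V$ are \emph{not} in general proportional. Take for instance $\{\tilde v_i\} = \{2\tilde w_i^\varphi\}$ (assuming $2 \ne 0$ in $k$); the change-of-basis matrix is diagonal with every entry equal to $2$, so your ``infinite determinant'' $\lambda$ would have to be $\prod_{i\ge 1} 2$, which is not an element of $k$. Concretely, $2\tilde w_1^\varphi \wedge 2\tilde w_2^\varphi \wedge \cdots$ and $\tilde w_1^\varphi \wedge \tilde w_2^\varphi \wedge \cdots$ are linearly independent in $\Lambda^\infty \tilde V$, because multilinearity in $T^\infty \tilde V$ and the relations generating $J(\tilde V)$ only allow one to manipulate finitely many tensor factors at a time. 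The same obstruction appears for an infinite permutation of a basis, where no sign can be assigned. Your exhaustion argument cannot repair this: even when compatible finite exhaustions $S_m, T_m$ exist, the block determinants need not stabilise, and the assertion that ``only finitely many of the finite blocks can fail to be identity-like after a suitable normalisation'' is simply untrue for a general pair of bases.

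The paper sidesteps the comparison of wedges entirely. It introduces the isomorphism $\phi\colon \tilde V \to \tilde V$, $\tilde v_i \mapsto \tilde w_i^\varphi$, and uses the functoriality $\Lambda^\infty(\psi\circ\chi) = \Lambda^\infty(\psi)\circ\Lambda^\infty(\chi)$ together with the algebraic identity $(1+\varphi)\phi^{-1} = \phi^{-1}(1+\phi\varphi\phi^{-1})$. Thus $\tilde v_1 \wedge \cdots$ is obtained from $\tilde w_1^\varphi \wedge \cdots$ by \emph{applying the linear map} $\Lambda^\infty(\phi^{-1})$, not by exhibiting a scalar between them; one then invokes Proposition \ref{prop:santa} for the conjugate $\phi\varphi\phi^{-1}$ and Lemma \ref{l:propret2}, which gives $\Det^k_V(1+\phi\varphi\phi^{-1}) = \Det^k_V(1+\varphi)$. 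The key conceptual point you are missing is that $\Lambda^\infty(\phi^{-1})$ is a perfectly well-defined $k$-linear endomorphism of $\Lambda^\infty \tilde V$ carrying one basis-wedge to the other, even though no scalar $\lambda\in k^\times$ does so.
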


\begin{proof} Let $\{{w}_1^\varphi, \dots , w_N^\varphi, {w}_{N+1}^\varphi \dots\}$ be the above Jordan basis of ${
V}$. Considering the isomorphism:
$$\begin{aligned}\phi \colon V &\longrightarrow V \\  v_i
&\longmapsto w_i^\varphi \end{aligned}$$\noindent and
bearing in mind Lemma \ref{l:propret2} and Proposition
\ref{prop:santa}, one has that:
$$\begin{aligned} \bigwedge^\infty(1 + \varphi) &\big
[v_1 \wedge \dots \wedge v_n \wedge \dots] =
\bigwedge^\infty([1 + \varphi]\circ \phi^{-1}) \big [{w}_1^\varphi \wedge \dots \wedge w_n^\varphi \wedge
 \dots ] \\ &= \big
(\bigwedge^\infty( \phi^{-1}) \circ \bigwedge^\infty(1 + [\phi\circ
\varphi\circ \phi^{-1}])\big ) \big [w_1^\varphi \wedge
\dots \wedge w_n^\varphi \wedge \dots ] \\ &= \Det^k_V (1
+ [\phi\circ \varphi\circ \phi^{-1}])\cdot \bigwedge^\infty(
\phi^{-1}) \big [w_1^\varphi \wedge \dots \wedge {\tilde
w}_n^\varphi \wedge  \dots ] \\ &= \Det^k_V (1 + \varphi) \cdot
\big [v_1 \wedge \dots \wedge v_n \wedge \dots]
\,.\end{aligned}$$
\end{proof}

\subsection{Infinite determinant and eigenvalues.}\label{ss:det&eigen}\quad 

Let $\varphi$ be a finite potent endomorphism of a
$k$-vector space $V$ with $AST$-decomposition $V = W_\varphi\oplus
U_\varphi$. Let $\lambda_1, \dots, \lambda_N$ now be the nonzero
eigenvalues of $\varphi$ in the algebraic closure of $k$ (repeated
according their algebraic multiplicities), which coincide with the
eigenvalues of the isomorphism $\varphi_{\vert_{W_\varphi}}$ (of a
finite-dimensional vector space) in the algebraic closure of $k$.

\begin{prop} \label{prop:eigen} If $\lambda_1, \dots, \lambda_N$ are all the nonzero
eigenvalues of $\varphi$, then one has that:
$$\Det^k_V (1 + \varphi) = \prod_{i=1}^N (1 + \lambda_i)\, .$$
\end{prop}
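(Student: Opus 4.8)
The plan is to reduce the statement to the finite-dimensional case, where the identity $\Det(1+\phi) = \prod (1+\lambda_i)$ over a field is standard. First I would invoke the $\varphi$-invariant AST-decomposition $V = W_\varphi \oplus U_\varphi$ established in Remark~\ref{r:AST}: by the characterization of $\Det^k_V$ (properties (1)--(3)) applied through Lemma~\ref{le:sum}, together with the fact (noted just after the defining properties) that $\Det^k_V(1+\varphi) = \Det^k_{W_\varphi}(1+\varphi_{\vert_{W_\varphi}})$, the computation of $\Det^k_V(1+\varphi)$ collapses to the finite-dimensional automorphism $\varphi_{\vert_{W_\varphi}} \in \aut_k(W_\varphi)$.

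Next I would identify the eigenvalues. As already recalled in the paragraph preceding the proposition, the nonzero eigenvalues $\lambda_1,\dots,\lambda_N$ of $\varphi$ in $\bar k$ (counted with algebraic multiplicity) are exactly the eigenvalues of $\varphi_{\vert_{W_\varphi}}$, since $\varphi_{\vert_{U_\varphi}}$ is nilpotent and contributes only the eigenvalue $0$, while $\varphi_{\vert_{W_\varphi}}$ is an isomorphism of a finite-dimensional space, hence has no zero eigenvalue. Thus $N = \dim_k W_\varphi$ and the $\lambda_i$ are precisely the roots (with multiplicity) of the characteristic polynomial of $\varphi_{\vert_{W_\varphi}}$.

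Finally I would apply the classical finite-dimensional fact: for $\phi \in \ed_k(W)$ with $W$ finite-dimensional, $\Det^k_W(1+\phi) = \prod_{i=1}^{\dim W}(1+\mu_i)$ where the $\mu_i$ are the eigenvalues of $\phi$ in $\bar k$ with multiplicity — this is just the value at $1$ of the characteristic polynomial, or equivalently follows from Lemma~\ref{l:fin-end} by computing $\tr[\Lambda^r \phi]$ as the $r$-th elementary symmetric function of the $\mu_i$. Chaining these:
$$\Det^k_V(1+\varphi) = \Det^k_{W_\varphi}(1+\varphi_{\vert_{W_\varphi}}) = \prod_{i=1}^N (1+\lambda_i)\,.$$

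There is essentially no obstacle here: every ingredient has been prepared earlier in the excerpt. The only point requiring a word of care is the passage to the algebraic closure — one must note that $\Det^k_W$ is insensitive to base field extension for the purpose of this identity (the characteristic polynomial of $\phi$ has coefficients in $k$, and the product $\prod(1+\mu_i)$ over $\bar k$ is its value at $1$, which lies in $k$), so the formula, though phrased with eigenvalues in $\bar k$, yields the element $\Det^k_V(1+\varphi) \in k$. This is standard and I would dispatch it in a single sentence.
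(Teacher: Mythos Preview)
Your proof is correct and follows essentially the same route as the paper: reduce to the finite-dimensional piece $W_\varphi$ via the AST-decomposition (set up immediately before the proposition), then invoke the classical identity $\Det(1+\phi)=\prod(1+\lambda_i)$ on that finite-dimensional space, handling the passage to $\bar k$ with a one-line remark. The paper's version is terser---it writes the base-change step as $\Det^k_{W_\varphi}(1+\varphi_{\vert_{W_\varphi}}) = \Det^L_{W_\varphi\otimes_k L}[1+(\varphi_{\vert_{W_\varphi}}\otimes 1)]$ for a splitting field $L$---whereas you phrase it via the characteristic polynomial evaluated at $1$, but these are the same observation.
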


\begin{proof} If $k\hookrightarrow L$ is an extension of the scalar field $k$ that contains all eigenvalues
of $\varphi$, then: $$\Det^k_{W_\varphi} (1 +
\varphi_{\vert_{W_\varphi}}) = \Det^L_{{W_\varphi}\otimes_k L} [1
+ (\varphi_{\vert_{W_\varphi}} \otimes 1)] = \prod_{i=1}^N (1 +
\lambda_i)$$\noindent and the claim is deduced.
\end{proof}

    If $S_r (x_1, \dots ,x_N)$ is the elementary symmetric
function given by: $$S_r(x_1, \dots ,x_N) = \sum_{1\leq i_1 < \dots
< i_r \leq N} x_{i_1}\dots x_{i_r}\, ,$$\noindent a direct
consequence of Proposition \ref{prop:eigen} is:

\begin{cor} \label{cor:eigen} If $\mu \in k$ and $\lambda_1, \dots, \lambda_N$ are all the nonzero
eigenvalues of $\varphi$, then one has that: $$\Det^k_V (1 + \mu
\varphi) = \sum_{i=0}^N \mu^i\cdot S_i(\lambda_1, \dots
,\lambda_N)\, .$$
\end{cor}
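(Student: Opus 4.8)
The plan is to deduce Corollary~\ref{cor:eigen} directly from Proposition~\ref{prop:eigen} applied to the endomorphism $\mu\varphi$, together with the elementary fact that the nonzero eigenvalues of $\mu\varphi$ are exactly $\mu\lambda_1,\dots,\mu\lambda_N$ (when $\mu\neq 0$), and a separate trivial check when $\mu=0$. First I would note that $\mu\varphi$ is again finite potent: if $\varphi^n V$ is finite dimensional, then so is $(\mu\varphi)^n V \subseteq \varphi^n V$, so Proposition~\ref{prop:eigen} applies to it.

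Next, assuming $\mu\neq 0$, the nonzero eigenvalues of $\mu\varphi$ in the algebraic closure of $k$ are $\mu\lambda_1,\dots,\mu\lambda_N$, repeated according to algebraic multiplicities; this is immediate from the corresponding statement for the finite-dimensional isomorphism $\varphi_{\vert_{W_\varphi}}$, since scaling a linear operator on a finite-dimensional space by a nonzero scalar scales all its eigenvalues by that scalar and leaves the multiplicities (equivalently, the Jordan structure) unchanged. Hence Proposition~\ref{prop:eigen} gives
$$\Det^k_V(1+\mu\varphi) = \prod_{i=1}^N(1+\mu\lambda_i)\, .$$
Expanding this product over the choices of which factors contribute a $\mu\lambda_i$ term yields
$$\prod_{i=1}^N(1+\mu\lambda_i) = \sum_{r=0}^N \mu^r \sum_{1\leq i_1<\dots<i_r\leq N}\lambda_{i_1}\cdots\lambda_{i_r} = \sum_{r=0}^N \mu^r\, S_r(\lambda_1,\dots,\lambda_N)\, ,$$
which is exactly the claimed formula.

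Finally I would dispatch the degenerate case $\mu=0$ separately: then $1+\mu\varphi = 1$, so $\Det^k_V(1+\mu\varphi)=\Det^k_V(1)=1$ by property~(3) of the determinant (the zero endomorphism is nilpotent), while the right-hand side collapses to the $i=0$ term $S_0(\lambda_1,\dots,\lambda_N)=1$ since every other term carries a positive power of $\mu$; so the identity holds there too.

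I do not expect any genuine obstacle here — the statement is a formal consequence of Proposition~\ref{prop:eigen} and the binomial-type expansion of a finite product. The only point requiring a word of care is the behaviour of eigenvalue multiplicities under scaling by $\mu$, and the separate treatment of $\mu=0$ (where Proposition~\ref{prop:eigen} as literally stated still applies, since $0$ has no nonzero eigenvalues and the empty product is $1$, so in fact the $\mu=0$ case need not even be split off if one is willing to read the empty-product convention into Proposition~\ref{prop:eigen}). Alternatively, one could bypass Proposition~\ref{prop:eigen} entirely and derive the corollary from Corollary~\ref{cor:deter-char} by identifying $\tr_{\Lambda^r V}[\Lambda^r\varphi] = S_r(\lambda_1,\dots,\lambda_N)$, but the eigenvalue route above is shorter.
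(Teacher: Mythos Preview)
Your proposal is correct and matches the paper's approach: the paper gives no explicit proof but simply states that the corollary is ``a direct consequence of Proposition~\ref{prop:eigen}'', which is exactly the route you take (apply Proposition~\ref{prop:eigen} to $\mu\varphi$ and expand the resulting product via the elementary symmetric functions). Your additional care with the case $\mu=0$ and the observation about multiplicities under scaling are appropriate but not strictly needed, as you note yourself via the empty-product reading.
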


\begin{rem} If $\lambda_1, \dots, \lambda_N$ are again all the nonzero
eigenvalues of a finite potent endomorphism $\varphi$, note that
from Corollary \ref{cor:eigen} and Corollary \ref{cor:deter-char}
we can deduce that: $$\tr_{\bigwedge^r V} [\bigwedge^r \varphi] =
S_r(\lambda_1, \dots ,\lambda_N)\, .$$ 
In particular, when $r =1$ we have an algebraic version for finite potent endomorphisms
of Lidskki's Theorem from the equality:
$$\tr_{V} [\varphi] =  \sum_{i=1}^N \lambda_i\, .$$
\end{rem}

\section{Exponential map and infinite determinants.}\label{s:exp} \quad 

\subsection{Exponential map of finite potent endomorphisms.}\quad 

Let $k$ be a field of characteristic zero and let $V$ be a $k$-vector space. Let $k((z))$ be the field of Laurent series and let us denote $V_{z}:=V\otimes_{k}k((z))$ as a $k((z))$-vector space.
\begin{prop}\label{p:expz}
Let $\varphi \in \End_{k}(V)$ be a finite potent endomorphism and let us denote $\varphi \otimes 1$ the induced endomorphism of $V_{z}$. Then, there exists a well-defined exponential map:
\begin{align*}
\exp_{z} \colon \End_{k}(V) & \to \aut_{k((z))}(V_{z})\\
\varphi & \mapsto \exp_{z}(\varphi)=\id \otimes 1+z(\varphi \otimes 1) +\frac{z^2(\varphi \otimes 1)^2}{2}+\frac{z^3(\varphi \otimes 1)^3}{3!}+\cdots 
\end{align*}
Moreover, the endomorphism of $V_{z}$:
$$\bar \varphi=\exp_{z}(\varphi)-\id \otimes 1= z(\varphi \otimes 1) +\frac{z^2(\varphi \otimes 1)^2}{2}+\frac{z^3(\varphi \otimes 1)^3}{3!}+\cdots $$
is finite potent. 
\end{prop}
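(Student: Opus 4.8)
The plan is to base everything on the $\varphi$-invariant AST-decomposition $V = W_\varphi \oplus U_\varphi$ of Remark~\ref{r:AST}. Tensoring with $k((z))$ it induces a decomposition $V_z = (W_\varphi \otimes_k k((z))) \oplus (U_\varphi \otimes_k k((z)))$ which is preserved by the $k((z))$-linear extension of $\varphi$, so $\exp_z(\varphi)$ (once it is shown to make sense) and $\bar\varphi$ will be block-diagonal with respect to it; I treat the two blocks separately, using throughout that $m!$ is invertible in $k$ because $k$ has characteristic zero.

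On the nilpotent summand, fix $s$ with $\varphi_{\vert_{U_\varphi}}^s = 0$. Then the defining series terminates, $\exp_z(\varphi)_{\vert_{U_\varphi\otimes k((z))}} = \sum_{m=0}^{s-1} \frac{z^m}{m!}\varphi_{\vert_{U_\varphi}}^m$, which is a genuine polynomial in $\varphi_{\vert_{U_\varphi}}$ and hence a well-defined $k((z))$-linear endomorphism of $U_\varphi\otimes_k k((z))$; moreover $\bar\varphi$ restricted to this block equals $\varphi_{\vert_{U_\varphi}}$ times a polynomial in $\varphi_{\vert_{U_\varphi}}$, hence is nilpotent, so $1+\bar\varphi$ is unipotent there (invertible, with inverse a finite geometric sum) and $\bar\varphi$ is finite potent on this block trivially. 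On the finite summand, $W_\varphi$ is finite dimensional over $k$, hence $W_\varphi\otimes_k k((z))$ is a finite dimensional $k((z))$-vector space; choosing a $k$-basis of $W_\varphi$ identifies $\varphi_{\vert_{W_\varphi}}$ with a matrix $A \in \Mat_N(k)$ and $\exp_z(\varphi)_{\vert_{W_\varphi\otimes k((z))}}$ with $\sum_{m\geq 0}\frac{z^m}{m!}A^m$, whose $(i,j)$-entry is the well-defined power series $\sum_{m\geq 0}\frac{1}{m!}(A^m)_{ij}\, z^m \in k[[z]]$; thus this block lands in $\Mat_N(k[[z]]) \subset \Mat_N(k((z)))$, and it is invertible since the formal identity $\exp_z(A)\exp_z(-A) = \exp_z(0) = 1$ holds over $k[[z]]$ (characteristic zero, $zA$ and $-zA$ commute). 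In particular $\bar\varphi$ restricted to this block is an endomorphism of a finite dimensional $k((z))$-vector space, so it is finite potent.

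Putting the blocks together, for each $v\in V$ the element $\exp_z(\varphi)(v) = \sum_{m\geq 0}\frac{z^m}{m!}\varphi^m(v)$ genuinely lies in $V_z = V\otimes_k k((z))$ — its $U_\varphi$-component is a finite sum and its $W_\varphi$-component lies in $W_\varphi\otimes_k k[[z]] \subseteq W_\varphi\otimes_k k((z))$, the last inclusion being legitimate exactly because $W_\varphi$ is finite dimensional — so $v\mapsto \exp_z(\varphi)(v)$ is $k$-linear and extends to a $k((z))$-linear endomorphism of $V_z$, which is an automorphism because it is one on each summand. Finally $\bar\varphi = \exp_z(\varphi) - 1$ is the direct sum of the two finite potent endomorphisms above; taking the larger of the two nilpotency/finite-potence exponents shows that a direct sum of finite potent endomorphisms along an invariant decomposition is again finite potent, so $\bar\varphi$ is finite potent. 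I expect the only genuine subtlety to be this well-definedness point: a priori the series $\sum_m \frac{z^m}{m!}\varphi^m(v)$ lives in a $z$-adic completion of $V_z$ rather than in $V_z$ itself, and it is precisely the AST-decomposition that confines the non-terminating tail to the finite dimensional block $W_\varphi$, where no such completion is needed; everything else is routine, and one notes in passing that the formula $v\mapsto\sum_m \frac{z^m}{m!}\varphi^m(v)$ is intrinsic, so $\exp_z(\varphi)$ does not depend on the chosen decomposition or basis.
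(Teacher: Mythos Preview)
Your proof is correct and close in spirit to the paper's, but the tactics differ enough to be worth noting. The paper does not invoke the full AST decomposition; instead it takes $W=\varphi^n(V)$, picks a basis $\{w_1,\dots,w_r\}$ of $W$, and observes directly that for any $v\in V$ the first $n$ terms of the series lie in $V$ while the entire tail lands in $W_z$, so $\exp_z(\varphi)(v)=v+z\varphi(v)+\cdots+\tfrac{1}{(n-1)!}z^{n-1}\varphi^{n-1}(v)+\sum_i s_i(z)w_i\in V_z$. For the finite-potence of $\bar\varphi$, the paper then writes $\bar\varphi$ as a finite sum of commuting finite potent terms plus a finite-rank piece and appeals to Lemma~\ref{l:werw}. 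Your block-diagonal argument via $V=W_\varphi\oplus U_\varphi$ is cleaner on this second point: each block of $\bar\varphi$ is visibly finite potent (nilpotent on one side, finite dimensional on the other), so no auxiliary lemma is needed. The trade-off is that the paper's argument is slightly more hands-on for well-definedness, writing the image of a general $v$ explicitly, whereas you have to remark that the intrinsic formula $v\mapsto\sum_m\tfrac{z^m}{m!}\varphi^m(v)$ makes the result independent of the decomposition; both routes are short and neither is preferable in any essential way.
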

\begin{proof}
Since $\varphi$ is finite potent there exist a $\varphi$-invariant finite dimensional $k$-subspace $W$ of $V$ such that $\varphi^n(V)=W$. Let $\{w_{1},\dots ,w_{r}\}$ be a basis for $W$. Then, for any $v\in V$ we have:
\begin{align*}
\exp_{z}(\varphi)(v\otimes 1)&=v \otimes 1+z(\varphi \otimes 1)(v\otimes 1)+\cdots +\frac{1}{(n-1)!}(\varphi \otimes 1)^{n-1}(v\otimes 1)+\\
& +s_{1}(z)w_{1}+\cdots +s_{r}(z)w_{r}\,,
\end{align*}
where $s_{i}(z)\in k((z))$. Thus, $\exp_{z}(\varphi)(v\otimes 1)\in V_{z}$ for all $v\in V$, and therefore $\exp_{z}$ is well-defined. Notice that $\exp_{z}(\varphi + \psi)=\exp_{z}(\varphi)\exp_{z}(\psi)$ whenever $\varphi \psi=\psi \varphi$, and thus $\exp_{z}$ is a bijection.

In particular, 
$$\bar \varphi=\exp_{z}(\varphi)-\id \otimes 1= z(\varphi \otimes 1) +\frac{z^2(\varphi \otimes 1)^2}{2}+\frac{z^3(\varphi \otimes 1)^3}{3!}+\cdots $$
is an endomorphism of $V_{z}$ that consists of a finite sum of commuting finite potent endomorphisms (the first $n-1$ terms) plus a finite rank endomorphism of $W_{z}=W\otimes_{k}k((z))$. Accordingly, using Lemma \ref{l:werw} one concludes that $\bar \varphi$ is finite potent.
\end{proof}

\begin{prop}\label{p:detandtr}
If $\varphi \in \End_{k}(V)$ is finite potent, then:
$$\Det_{V_{z}}^{k((z))}\big(\exp_{z}(\varphi)\big)=\exp_{z}(\tr_{V}(\varphi))\in k((z))^\times \,.$$
\end{prop}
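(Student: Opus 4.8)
The plan is to reduce everything to a finite-dimensional computation via the AST-decomposition, exactly as in the definition of $\Det^k_V$ and Tate's trace. Let $V = W_\varphi \oplus U_\varphi$ be the $\varphi$-invariant AST-decomposition, so that $\varphi_{\vert_{U_\varphi}}$ is nilpotent, $W_\varphi$ is finite dimensional, and $\varphi_{\vert_{W_\varphi}}$ is an isomorphism. Since this decomposition is $\varphi$-invariant, it induces a $\bar\varphi$-invariant decomposition $V_z = (W_\varphi)_z \oplus (U_\varphi)_z$, and moreover $\exp_z(\varphi)$ respects this splitting. The first step is therefore to invoke Lemma~\ref{le:sum} to write
$$\Det^{k((z))}_{V_z}\big(\exp_z(\varphi)\big) = \Det^{k((z))}_{(W_\varphi)_z}\big(\exp_z(\varphi_{\vert_{W_\varphi}})\big)\cdot \Det^{k((z))}_{(U_\varphi)_z}\big(\exp_z(\varphi_{\vert_{U_\varphi}})\big),$$
while on the trace side $\tr_V(\varphi) = \tr_{W_\varphi}(\varphi_{\vert_{W_\varphi}})$ by Remark~\ref{r:AST}, so the right-hand side $\exp_z(\tr_V(\varphi))$ only sees the $W_\varphi$ part.

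The second step handles the nilpotent block. If $\varphi_{\vert_{U_\varphi}}$ is nilpotent, then $\bar\varphi_{\vert_{(U_\varphi)_z}} = \exp_z(\varphi_{\vert_{U_\varphi}}) - 1$ is again nilpotent: each power $z^j\varphi^j/j!$ is nilpotent, they all commute, so their finite-degree-nilpotent parts combine and the infinite tail vanishes since $\varphi^j = 0$ for $j$ large. Hence $\exp_z(\varphi_{\vert_{U_\varphi}}) = 1 + (\text{nilpotent})$ and property~(3) of $\Det^k_V$ gives $\Det^{k((z))}_{(U_\varphi)_z}\big(\exp_z(\varphi_{\vert_{U_\varphi}})\big) = 1$. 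So the whole determinant equals the $W_\varphi$-contribution.

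The third step is the finite-dimensional identity: for a finite-dimensional $k$-vector space $W$ and $\phi \in \End_k(W)$, one has $\Det\big(\exp_z(\phi)\big) = \exp_z(\tr(\phi))$ in $k((z))^\times$. Here $\exp_z(\phi) = \sum_{n\ge 0} z^n \phi^n/n!$ is a genuine invertible matrix over the power-series ring $k[[z]]$ (its reduction mod $z$ is $\id_W$), and the claim is the classical $\Det(\exp A) = \exp(\tr A)$ applied formally over $k[[z]]$ — valid since $\operatorname{char} k = 0$ makes the exponential and logarithm well-defined on matrices that are $\equiv \id \pmod z$. One proves it by, say, putting $\phi$ in triangular form over $\bar k$, where both sides become $\prod_i (e^{z\lambda_i}) = e^{z\sum_i \lambda_i}$; alternatively differentiate $\log\Det(\exp_z(t\phi))$ in $t$ using Jacobi's formula. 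Applying this with $W = W_\varphi$, $\phi = \varphi_{\vert_{W_\varphi}}$, and combining with the previous two steps yields
$$\Det^{k((z))}_{V_z}\big(\exp_z(\varphi)\big) = \exp_z\big(\tr_{W_\varphi}(\varphi_{\vert_{W_\varphi}})\big) = \exp_z\big(\tr_V(\varphi)\big),$$
which lies in $k((z))^\times$ because $\exp_z(\varphi)$ is invertible (Proposition~\ref{p:expz}) and Proposition~\ref{prop:invertible} then forces the determinant to be nonzero.

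The main obstacle is making precise the passage from $V$ to $V_z$ at the level of the AST-decomposition and the determinant: one must check that $(W_\varphi)_z$ is exactly the finite-dimensional block over $k((z))$ for the operator $\bar\varphi$ (i.e., that base change commutes with taking the AST-decomposition), that $\exp_z(\varphi)$ genuinely preserves the splitting, and that the determinant of an operator of the form $1 + \bar\varphi$ on $V_z$ is computed by the same finite-dimensional reduction used over $k$ — this is the content of Proposition~\ref{p:expz} together with the characterizing properties (1)--(3), but it deserves an explicit line. Everything else is either the formal classical identity $\Det(\exp A) = \exp(\tr A)$ or a direct citation of the lemmas above.
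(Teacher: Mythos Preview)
Your proof is correct and follows the same overall strategy as the paper's---reduce to a finite-dimensional subspace and invoke the classical identity $\Det(\exp A)=\exp(\tr A)$---but the reduction is carried out differently. You split $V$ via the AST-decomposition $V=W_\varphi\oplus U_\varphi$, apply Lemma~\ref{le:sum} to factor the determinant, and then kill the nilpotent block by arguing that $\exp_z(\varphi_{\vert U_\varphi})-1$ is nilpotent. The paper instead takes any finite-dimensional $\varphi$-invariant $W$ with $\varphi^n V\subset W$ (for instance $W=\varphi^n V$), observes that $W_z$ serves the same role for $\bar\varphi$ on $V_z$, and uses the defining reduction $\Det_{V_z}^{k((z))}(1+\bar\varphi)=\Det_{W_z}^{k((z))}(1+\bar\varphi_{\vert W_z})$ directly---no direct-sum splitting and no separate treatment of a nilpotent piece. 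The paper's route is a line shorter because the characterizing property of the determinant already absorbs the nilpotent behavior on $V/W$; your route is perhaps more transparent about \emph{why} nothing is lost on the complement, and your final remark about the ``main obstacle'' (compatibility of the base change $V\rightsquigarrow V_z$ with the finite-dimensional reduction) is exactly the point the paper sweeps into the sentence ``$(\varphi\otimes 1)(W_z)\subset W_z$ and $(\varphi\otimes 1)^n(V_z)\subset W_z$''.
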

\begin{proof}
By Proposition \ref{p:expz} the endomorphism $\bar \varphi=\exp_{z}(\varphi)-1$ is finite potent and therefore:
$$\Det_{V_{z}}^{k((z))}\big(\exp_{z}(\varphi)\big)=\Det_{V_{z}}^{k((z))}\big(1+\bar \varphi\big)$$
is well-defined. Let $W$ be a finite $k$-subspace of $V$ such that $\varphi (W)\subset W$ and $\varphi^n(V)\subset W$, and let us denote $W_{z}=W\otimes_{k}k((z))$. One has that $(\varphi \otimes 1)(W_{z})\subset W_{z}$ and $(\varphi \otimes 1)^n(V_{z})\subset W_{z}$. Since the result is well-known to be true for finite dimensinal vector spaces one finishes:
\begin{align*}
\Det_{V_{z}}^{k((z))}\big(\exp_{z}(\varphi)\big)&=\Det_{W_{z}}^{k((z))}\big(\exp_{z}(\varphi)_{|W_{z}}\big)=\Det_{W_{z}}^{k((z))}\big(\exp_{z}(\varphi_{|W})\big)=\\
&=\exp_{z}\big(\tr_{W}(\varphi_{|W})\big)=\exp_{z}\big(\tr_{V}(\varphi)\big)\,.
\end{align*}
\end{proof}

Consider the subspace $E_{0}$ of $\End_{k}(V)$ of equation (\ref{e:E}). By Proposition \ref{p:Tate1}, the subspace $E_{0}$ depends only on the commensurability class of $V_{+}$.

\begin{prop}\label{p:detprod}
If both $f$ and $g$ lie on $E_{0}$ then:
$$\Det_{V_{z}}^{k((z))}\big(\exp_{z}(f)\cdot \exp_{z}(g)\big)=\Det_{V_{z}}^{k((z))}\big(\exp_{z}(f)\big)\cdot \Det_{V_{z}}^{k((z))}\big(\exp_{z}(g)\big)$$
\end{prop}

\begin{proof}


First of all, let us notice that the product of any two elements in $E_{0}$ is a finite rank endomorphism of $V$. Since $f$, $g \in E_{0}$, in particular they are finite potent endomorphisms and one has that $f^2(V)$ and $g^2(V)$ are finite dimensional subspaces of $V$. Moreover, $\langle f,g \rangle$ is a finite potent subspace of $\End_{k}(V)$ (by Lemma \ref{l:finrank}) and $\langle f,g \rangle^2(V)=W$ is a finite dimensional subspace of $V$ such that:
\begin{itemize}
\item $f^2(V)\subseteq W$ and $f(W)\subseteq W$.
\item $g^2(V)\subseteq W$ and $g(W)\subseteq W$.
\item $fg(V)\subseteq W$ and $fg(W)\subseteq W$.
\item $gf(V)\subseteq W$ and $gf(W)\subseteq W$.
\item $[f,g](V)\subseteq W$ and $[f,g](W)\subseteq W$.
\end{itemize}
Let us write $\exp_{z}(f)=1+\varphi$ and $\exp_{z}(g)=1+\psi$, where:
$$\varphi=zf+\frac{z^2}{2}f^2+\frac{z^3}{3!}f^3+\cdots $$ 
$$\psi=zg+\frac{z^2}{2}g^2+\frac{z^3}{3!}g^3+\cdots $$
are both finite pontent endomorphisms of $V_{z}$ by Proposition \ref{p:expz}. Thus:
$$[\varphi,\psi](V_{z})=[zf,zg](V_{z})+[zf,\frac{z^2}{2}g](V_{z})+[\frac{z^2}{2}f,g](V_{z})+\cdots \subseteq W\otimes_{k}k((z))=W_{z}\,,$$
and therefore $[\varphi,\psi]$ is a finite rank endomorphism of $V_{z}$. Again applying Lemma \ref{l:finrank}, one has that $\langle \varphi,\psi \rangle$ is a finite potent subspace of $\End_{k}(V_{z})$ and using Lemma \ref{l:propret} one concludes:
\begin{align*}
\Det_{V_{z}}^{k((z))}\big(\exp_{z}(f)\cdot \exp_{z}(g)\big)&=\Det_{V_{z}}^{k((z))}\big((1+\varphi)\cdot (1+\psi)\big)=\\
&=\Det_{V_{z}}^{k((z))}\big(1+\varphi\big)\cdot \Det_{V_{z}}^{k((z))}\big(1+\psi\big)=\\
&=\Det_{V_{z}}^{k((z))}\big(\exp_{z}(f)\big)\cdot \Det_{V_{z}}^{k((z))}\big(\exp_{z}(g)\big)\,.
\end{align*}
\end{proof}

\begin{prop}\label{p:detsum}
If both $f$ and $g$ lie on $E_{0}$ then:
$$\Det_{V_{z}}^{k((z))}\big(\exp_{z}(f+g)\big)=\Det_{V_{z}}^{k((z))}\big(\exp_{z}(f)\big)\cdot \Det_{V_{z}}^{k((z))}\big(\exp_{z}(g)\big)\,.$$  
\end{prop}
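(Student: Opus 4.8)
The plan is to reduce everything to a finite-dimensional computation by passing to the finite-dimensional subspace $W = \langle f,g\rangle^2(V)$ used in the proof of Proposition \ref{p:detprod}, and then to combine Proposition \ref{p:detprod} with the Baker--Campbell--Hausdorff-type relation between $\exp_z(f+g)$ and $\exp_z(f)\exp_z(g)$. The point is that $f$ and $g$ need not commute, so in general $\exp_z(f+g) \ne \exp_z(f)\exp_z(g)$; however, since $f,g \in E_0$, all products $fg$, $gf$, and in particular the commutator $[f,g]$, are of finite rank, and $f^2(V), g^2(V) \subseteq W$. Thus on $V_z$ the endomorphisms $\varphi = \exp_z(f)-1$ and $\psi = \exp_z(g)-1$ differ from their linear parts $zf$, $zg$ only by terms landing in $W_z$, and $\bar\chi := \exp_z(f+g)-1$ likewise differs from $z(f+g)$ only by such terms.

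First I would observe that both sides of the claimed identity make sense: $f+g$ lies in $E_0$ (since $E_0$ is a $k$-vector subspace of $\End_k(V)$ by Proposition \ref{p:Tate1}), hence is finite potent, so $\exp_z(f+g)$ is defined and $\Det_{V_z}^{k((z))}(\exp_z(f+g))$ exists by Proposition \ref{p:expz}. Next I would show that $1 + \bar\chi$, $1+\varphi$ and $1+\psi$ all lie in a common finite potent subspace of $\End_k(V_z)$: indeed each of $\bar\chi, \varphi, \psi$ maps $V_z$ into $W_z + (\text{nilpotent part})$, and more precisely $\bar\chi - z(f+g)$, $\varphi - zf$, $\psi - zg$ all have image in $W_z$, while $zf, zg, z(f+g)$ are finite potent with $(zf)^2, (zg)^2, (z(f+g))^2$ having image in $W_z$. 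An argument parallel to the one in Proposition \ref{p:detprod} (using Lemma \ref{l:finrank} applied to pairs of these endomorphisms, whose commutators are finite rank because $[f,g]$ is) shows that all the relevant wedge-type subspaces are finite potent and that $\Det_{V_z}^{k((z))}(1+\bar\chi) = \Det_{W_z'}^{k((z))}(\dots)$ for a single finite-dimensional $k((z))$-subspace $W_z'$ containing all the relevant images, with the same reduction valid for $\varphi$ and $\psi$.

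Having reduced all three determinants to determinants of the restrictions of $\exp_z(f_{|W}+g_{|W})$, $\exp_z(f_{|W})$, $\exp_z(g_{|W})$ to the finite-dimensional space $W_z'$, I would invoke the classical finite-dimensional identity $\det(\exp(A)) = \exp(\tr A)$, which is multiplicative and additive simultaneously: $\det(\exp_z(f_{|W}+g_{|W})) = \exp_z(\tr(f_{|W}+g_{|W})) = \exp_z(\tr f_{|W})\exp_z(\tr g_{|W}) = \det(\exp_z(f_{|W}))\det(\exp_z(g_{|W}))$. Combined with the trace relation $\tr_W(f_{|W}) = \tr_V(f)$ (and similarly for $g$ and $f+g$, valid because $W$ absorbs high powers), this yields the claim; in fact one sees that both sides equal $\exp_z(\tr_V(f) + \tr_V(g)) = \exp_z(\tr_V(f+g))$, exactly as in Proposition \ref{p:detandtr}. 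Alternatively, and more cleanly, I would simply chain Proposition \ref{p:detandtr} three times: $\Det_{V_z}^{k((z))}(\exp_z(f+g)) = \exp_z(\tr_V(f+g)) = \exp_z(\tr_V(f))\cdot\exp_z(\tr_V(g)) = \Det_{V_z}^{k((z))}(\exp_z(f))\cdot\Det_{V_z}^{k((z))}(\exp_z(g))$, where the middle equality uses $k$-linearity of $\tr_V$ on the finite potent subspace $E_0$ (property (4) of Tate's trace, together with Proposition \ref{p:Tate1}) and $\exp_z$ being a homomorphism $(k((z)),+) \to (k((z))^\times,\cdot)$.

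The main obstacle, if one avoids the shortcut via Proposition \ref{p:detandtr}, is the bookkeeping needed to certify that a single finite-dimensional subspace simultaneously absorbs all the high powers of $\bar\chi$, $\varphi$, $\psi$ and is invariant under all of them, so that the multiplicativity property (2) of $\Det^k_V$ applies uniformly; this is where the hypothesis $f,g \in E_0$ (rather than merely finite potent) is essential, since it forces $[f,g]$ and all length-two products to be of finite rank, as exploited already in Proposition \ref{p:detprod}. With the shortcut, there is essentially no obstacle: the statement is an immediate corollary of Proposition \ref{p:detandtr} and the additivity of $\tr_V$ on $E_0$, and I would present it that way, remarking afterward that it also follows directly from Proposition \ref{p:detprod} once one knows $\exp_z(f+g)$ and $\exp_z(f)\exp_z(g)$ have the same determinant, which in turn is the finite-dimensional BCH identity pulled back to $W_z'$.
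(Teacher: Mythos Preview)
Your ``shortcut'' via Proposition~\ref{p:detandtr} and the $k$-linearity of $\tr_V$ on $E_0$ is exactly the paper's proof, word for word. The longer finite-dimensional reduction you sketch first is correct in spirit but entirely superfluous here; the paper dispenses with it and goes straight to the three-line chain $\Det_{V_z}^{k((z))}(\exp_z(f+g)) = \exp_z(\tr_V(f+g)) = \exp_z(\tr_V(f))\exp_z(\tr_V(g)) = \Det_{V_z}^{k((z))}(\exp_z(f))\Det_{V_z}^{k((z))}(\exp_z(g))$.
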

 \begin{proof}
Since $E_{0}$ is a subspace one has that  $f+g \in E_{0}$ and hence its exponential is well-defined. Therefore, using Proposition \ref{p:detandtr} and bearing in mind that $\tr_{V}$ is $k$-linear on $E_{0}$ (see Proposition \ref{p:Tate1}) one has:
\begin{align*}
\Det_{V_{z}}^{k((z))}\big(\exp_{z}(f+g)\big)&=\exp_{z}\big(\tr_{V}(f+g)\big)=\exp_{z}\big(\tr_{V}(f)+\tr_{V}(g)\big)=\\
&=\exp_{z}\big(\tr_{V}(f)\big)\cdot \exp_{z}\big(\tr_{V}(g)\big)=\\
&=\Det_{V_{z}}^{k((z))}\big(\exp_{z}(f)\big)\cdot \Det_{V_{z}}^{k((z))}\big(\exp_{z}(g)\big)\,.
\end{align*}
\end{proof}

\begin{cor}
If both $f$ and $g$ lie on $E_{0}$ then:
$$\Det_{V_{z}}^{k((z))}\big(\exp_{z}(f+g)\big)=\Det_{V_{z}}^{k((z))}\big(\exp_{z}(f)\cdot \exp_{z}(g)\big)\,.$$
\end{cor}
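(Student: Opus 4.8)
The plan is to combine the two immediately preceding results. By Proposition \ref{p:detsum} we have
$$\Det_{V_{z}}^{k((z))}\big(\exp_{z}(f+g)\big)=\Det_{V_{z}}^{k((z))}\big(\exp_{z}(f)\big)\cdot \Det_{V_{z}}^{k((z))}\big(\exp_{z}(g)\big)\,,$$
and by Proposition \ref{p:detprod}, since $f,g\in E_{0}$,
$$\Det_{V_{z}}^{k((z))}\big(\exp_{z}(f)\cdot \exp_{z}(g)\big)=\Det_{V_{z}}^{k((z))}\big(\exp_{z}(f)\big)\cdot \Det_{V_{z}}^{k((z))}\big(\exp_{z}(g)\big)\,.$$
So first I would invoke Proposition \ref{p:detsum} to rewrite the left-hand side of the claimed identity as the product of the two determinants, then invoke Proposition \ref{p:detprod} to recognize that same product as $\Det_{V_{z}}^{k((z))}\big(\exp_{z}(f)\cdot \exp_{z}(g)\big)$, and chain the two equalities.

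There is essentially no obstacle here: the corollary is a formal consequence of the two propositions, both of which have already been established for $f,g\in E_{0}$, and both of which yield the same right-hand side (the product of the individual determinants). The only thing worth checking is that the hypotheses of both propositions are met — they are, verbatim, the hypothesis $f,g\in E_{0}$ assumed in the corollary — and that all three determinants appearing are well-defined, which follows from Proposition \ref{p:expz} (the endomorphisms $\exp_{z}(f)-1$, $\exp_{z}(g)-1$, $\exp_{z}(f+g)-1$ are finite potent, and $E_{0}$ is a subspace so $f+g\in E_{0}$). The proof is thus a two-line chain of equalities; no calculation is needed beyond quoting the two prior statements.
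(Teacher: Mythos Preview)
Your proposal is correct and is exactly the intended argument: the paper states this as an immediate corollary of Propositions~\ref{p:detprod} and~\ref{p:detsum} with no further proof, and your two-line chain of equalities is precisely how it follows.
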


\subsection{Determinant of an infinite product of exponentials.}\label{ss:detext}\quad

Let us denote the set $A_{n}=\prod_{i=1}^n \exp_{z^i}(E_{0})$ (for $n\geq 1$) and write:
$$a_{n}=\{\exp_{z}(\varphi_{1}),\dots ,\exp_{z^{n-1}}(\varphi_{n-1}),\exp_{z^{n}}(\varphi_{n})\}\,.$$
For each $n'\geq n$, let us consider the maps:
\begin{align*}
\phi_{n'n} \colon  A_{n'}&\to A_{n}\\
a_{n'}& \mapsto a_{n}\,.
\end{align*}
Thus, $\{A_{n},\phi_{n'n}\}$ is an inverse system of sets and $\varprojlim_{n}A_{n}=\prod_{i=1}^{\infty}\exp_{z^{i}}(E_{0})$.

\begin{rem}
Following Proposition \ref{p:expz}, for every $\varphi \in E_{0}$ its exponential $\exp_{z^{i}}(\varphi)$ has the shape:
$$\exp_{z^{i}}(\varphi)=\id \otimes 1+z^{i}(\varphi \otimes 1)\frac{z^2(\varphi \otimes 1)^2}{2}+\frac{z^3(\varphi \otimes 1)^3}{3!}+\cdots \,,$$
where $\bar \varphi=\exp_{z^{i}}(\varphi) - \id \otimes 1$ is a finite potent endomorphism of $V_{z}$. Thus, its determinant is well-defined. \qed
\end{rem}

We shall abuse of notation by using the same symbol $\prod$ to denote both the Cartesian product of sets and the product of elements in a field.

\begin{prop}\label{p:detinfexp}
There exists a well-defined map:
\begin{align*}
\Det_{V_{z}}^{k((z))}\colon A_{n} & \to k((z))\\
a_{n}& \mapsto \Det^{k((z))}_{V_{z}}(a_{n})=\prod_{i=1}^n\Det_{V_{z}}^{k((z))}\big(\exp_{z^{i}}(\varphi_{i})\big)
\end{align*}
\end{prop}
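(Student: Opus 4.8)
The plan is to reduce Proposition~\ref{p:detinfexp} to the well-definedness of a single factor $\Det_{V_{z}}^{k((z))}\big(\exp_{z^{i}}(\varphi_{i})\big)$, which has essentially already been recorded in the Remark preceding the statement. For fixed $i$ one has $\exp_{z^{i}}(\varphi_{i})=1+\bar\varphi_{i}$ with $\bar\varphi_{i}=z^{i}\varphi_{i}+\frac{z^{2i}}{2}\varphi_{i}^{2}+\cdots$, and, running the argument of Proposition~\ref{p:expz} with $z^{i}$ in place of $z$, $\bar\varphi_{i}$ is a finite potent endomorphism of $V_{z}$: modulo a finite-dimensional $W_{z}$ it is a finite sum of commuting finite potent endomorphisms plus a finite rank endomorphism of $W_{z}$, so Lemma~\ref{l:werw} applies. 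Hence $\Det_{V_{z}}^{k((z))}\big(\exp_{z^{i}}(\varphi_{i})\big):=\Det_{V_{z}}^{k((z))}(1+\bar\varphi_{i})$ is a legitimate scalar; since $n$ is finite there is no convergence issue, and a finite product of elements of $k((z))$ is an element of $k((z))$.

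The steps I would carry out, in order, are as follows. First I would observe that $\exp_{z^{i}}\colon E_{0}\to\exp_{z^{i}}(E_{0})$ is a bijection (its inverse simply reads off the coefficient of $z^{i}$), so a tuple $a_{n}=\{\exp_{z}(\varphi_{1}),\dots,\exp_{z^{n}}(\varphi_{n})\}\in A_{n}$ determines the endomorphisms $\varphi_{1},\dots,\varphi_{n}\in E_{0}$ unambiguously; this is what makes the proposed formula for $\Det_{V_{z}}^{k((z))}(a_{n})$ independent of any choice of representatives. Second, for each $i$ I would invoke the finite potency of $\bar\varphi_{i}$ just recalled, together with the verbatim adaptation of Proposition~\ref{p:detandtr} (with $z^{i}$ in place of $z$), to obtain $\Det_{V_{z}}^{k((z))}\big(\exp_{z^{i}}(\varphi_{i})\big)=\exp_{z^{i}}\big(\tr_{V}(\varphi_{i})\big)\in k((z))^{\times}$. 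Third, I would conclude that $\Det_{V_{z}}^{k((z))}(a_{n})=\prod_{i=1}^{n}\exp_{z^{i}}\big(\tr_{V}(\varphi_{i})\big)$ is a well-defined element of $k((z))^{\times}\subseteq k((z))$ depending only on $a_{n}$, and finally I would note that the transition maps $\phi_{n'n}$ are compatible with these determinants, since they merely forget the last components of a tuple.

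I do not expect a serious obstacle here: the only genuinely non-formal ingredient is the finite potency of each $\bar\varphi_{i}$, which is not immediate because $\bar\varphi_{i}$ is given by an infinite series — this is precisely why the Remark reproves it via Lemma~\ref{l:werw}, and it is already in hand. The real difficulty of this subsection, namely attaching a determinant to a genuine \emph{infinite} product $\prod_{i=1}^{\infty}\exp_{z^{i}}(\varphi_{i})\in\varprojlim_{n}A_{n}$, where finiteness of the product of scalars is no longer automatic, lies beyond the present statement and is handled in the subsequent passage to the inverse limit.
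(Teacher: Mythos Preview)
Your argument is correct: once each factor $\Det_{V_{z}}^{k((z))}\big(\exp_{z^{i}}(\varphi_{i})\big)$ is a well-defined element of $k((z))^{\times}$ (by the Remark preceding the statement, or by your direct appeal to Proposition~\ref{p:expz} and Proposition~\ref{p:detandtr} with $z^{i}$ in place of $z$), and once the $\varphi_{i}$ are uniquely recovered from $a_{n}$ via the coefficient of $z^{i}$, the product formula gives an unambiguous scalar. Nothing more is needed for the bare well-definedness claim.

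The paper, however, takes a slightly different route worth noting. Rather than treating the right-hand side as a formal product of independently defined scalars, it first sends $a_{n}$ to the single automorphism $\prod_{i=1}^{n}\exp_{z^{i}}(\varphi_{i})\in\aut_{k((z))}(V_{z})$ via the inclusion $A_{n}\hookrightarrow\aut_{k((z))}(V_{z})$, and then invokes Proposition~\ref{p:detprod} to identify the determinant of that product automorphism with $\prod_{i=1}^{n}\Det_{V_{z}}^{k((z))}\big(\exp_{z^{i}}(\varphi_{i})\big)$. Your approach is more elementary (it avoids Proposition~\ref{p:detprod} entirely and needs only the single-factor results), while the paper's approach buys something extra: it shows that $\Det_{V_{z}}^{k((z))}(a_{n})$ really is the determinant of the \emph{product} automorphism, not merely a product of separate determinants. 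This compatibility is what later justifies writing $\Det_{V_{z}}^{k((z))}\big(\prod_{i=1}^{\infty}\exp_{z^{i}}(\varphi_{i})\big)$ for the limit.
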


\begin{proof}
This follows from the fact that for each $n\in \mathbb N$ there exists an inclusion:
\begin{align*}
A_{n}= \prod_{i=1}^n \exp_{z^i}(E_{0}) & \hookrightarrow \aut_{k((z))}(V_{z})\\
 a_{n} &\mapsto \prod_{i=1}^n\exp_{z^{i}}(\varphi_{i})
\end{align*}
and hence, since Proposition \ref{p:detprod} shows that:
$$\Det_{V_{z}}^{k((z))}\big(\prod_{i=1}^n\exp_{z^{i}}(\varphi_{i})\big)=\prod_{i=1}^n\Det_{V_{z}}^{k((z))}\big(\exp_{z^{i}}(\varphi_{i})\big)\,,$$
one concludes.
\end{proof}

The aim now is to define the determinant for certain elements in the limit $\prod_{i=1}^{\infty}\exp_{z^{i}}(E_{0})$. Notice that one has:
$$\prod_{i=1}^{\infty} \exp_{z^i}(\varphi_{i})=\id \otimes 1+(\varphi_{1}\otimes 1)z+(\frac{\varphi_{1}\otimes 1}{2}+\varphi_{2}\otimes 1)z^2+\cdots \in \aut_{k((z))}(V_{z})$$
and hence there exists a well-defined map:
\begin{align*}
\varprojlim_{n}A_{n}=\prod_{i=1}^{\infty} \exp_{z^i}(E_{0}) & \hookrightarrow \aut_{k((z))}(V_{z})\\
 \{a_{n}\}=\{\exp_{z}(\varphi_{1}),\dots ,\exp_{z^{n-1}}(\varphi_{n-1}),\exp_{z^{n}}(\varphi_{n}),\dots \} & \mapsto \prod_{i=1}^{\infty}\exp_{z^{i}}(\varphi_{i})\,.
\end{align*}
However, note too that even if we have:
$$\prod_{i=1}^{\infty} \exp_{z^i}(\varphi_{i})=\id \otimes 1+(\varphi_{1}\otimes 1)z+(\frac{\varphi_{1}\otimes 1}{2}+\varphi_{2}\otimes 1)z^2+\cdots \in E_{0}\otimes_{k}k((z))\,,$$
that is, every coefficient of $z^{i}$ is an element in $E_{0}$ (so in particular every coefficient is finite potent), we still don't know whether a common finite dimensional subspace of $V$ exists or not for all coefficients of $z^{i}$. Therefore, we cannot directly define its determinant. 

In order to solve this deficiency, it sufficies to consider elements in the limit $\{a_{n}\} \in \varprojlim_{n} A_{n}$ that are compatible with  $\Det_{V_{z}}^{k((z))}\colon A_{n} \to k((z))$ 
(where $k((z))$ is endowed with the trivial inverse system); that is, elements $\{a_{n}\}\in \varprojlim_{n} A_{n}$ for which there exists $m\in \mathbb N$ such that for each $n\geq m$:
$$\Det_{V_{z}}^{k((z))}(a_{n})=\Det_{V_{z}}^{k((z))}(a_{m})\,.$$

\noindent For these elements it makes sense to give the following definition.
 


\begin{defn}
Let $\{a_{n}\} \in \varprojlim_{n} A_{n}$ be compatible with $\Det_{V_{z}}^{k((z))}\colon A_{n} \to k((z))$. We define its determinant by:
$$\Det_{V_{z}}^{k((z))}(\{a_{n}\}):=\Det_{V_{z}}^{k((z))}(a_{m})\in k((z))\,.$$
\end{defn}
\noindent Therefore, using Proposition \ref{p:detprod} we have:
$$\Det_{V_{z}}^{k((z))}\big(\prod_{i=1}^{\infty}\exp_{z^{i}}(\varphi_{i})\big):=\Det_{V_{z}}^{k((z))}\big(\prod_{i=1}^m\exp_{z^{i}}(\varphi_{i})\big)=\prod_{i=1}^m\Det_{V_{z}}^{k((z))}\big(\exp_{z^{i}}(\varphi_{i})\big)\,.$$

\begin{exam}\label{ex:C_{i}}
Let $\{\varphi_{i}\}_{i\geq 1}$ be a family of elements in $E_{0}$ such that $\tr_{V}(\varphi_{i})=0$ for all $i\geq m$ for some $m\in \mathbb N$. We have that:
\begin{itemize}
\item $\Det_{V_{z}}^{k((z))}\big(\exp_{z^{i+1}}(\varphi_{i})\big)$ is well-defined for all $i\geq 1$ because $\varphi_{i}\in E_{0}$.
\item $\Det_{V_{z}}^{k((z))}\big(\exp_{z^{i+1}}(\varphi_{i})\big)=1$ for all $i\geq m$, since $\Det_{V_{z}}^{k((z))}\big(\exp_{z^{i+1}}(\varphi_{i})\big)=\exp_{z^{i+1}}\big(\tr_{V}(\varphi_{i})\big)$ (see Proposition \ref{p:detandtr}) and $\tr_{V}(\varphi_{i})=0$ for all $i\geq m$.
\end{itemize}
 Then, for all $n \geq m$ we have:
 $$\Det_{V_{z}}^{k((z))}\big(\prod_{i=1}^{n}\exp_{z^{i+1}}(\varphi_{i})\big)=\prod_{i=1}^n\Det_{V_{z}}^{k((z))}\big(\exp_{z^{i+1}}(\varphi_{i})\big)=\prod_{i=1}^{m-1}\Det_{V_{z}}^{k((z))}\big(\exp_{z^{i+1}}(\varphi_{i})\big)$$
 and therefore the $\varphi_{i}$'s are compatible with $\Det_{V_{z}}^{k((z))}$ and we obtain:
 $$\Det_{V_{z}}^{k((z))}\big(\prod_{i=1}^{\infty}\exp_{z^{i+1}}(\varphi_{i})\big)=\prod_{i=1}^{m-1}\Det_{V_{z}}^{k((z))}\big(\exp_{z^{i+1}}(\varphi_{i})\big)\in k((z))$$
\end{exam}

\section{Central extension of groups and Tate's residue.}\label{s:central}\quad 

Using the theory of infinite determinants developed in the previous sections we are about to construct a central extension of groups by giving its associated cocycle explicitly. The importance of this extension lies in the fact that  it can be viewed as the multiplicative analogue of Tate's formalism of abstract residues in terms of traces of finite potent endomorphisms. Finally, a reciprocity law for this coclycle is given, which can be thought of as a multiplicative version of Tate's theorem of residues.

\subsection{Central extension of groups and Tate's residue.}\quad 

Let us recall Tate's definition of the ``residue map'' (\cite{Ta}). ``Let $K$ be a commutative $k$-algebra with unit, $V$ a $K$-module and $V_{+}$ a $k$-subspace of $V$ such that $fV_{+}<V_{+}$ for all $f\in K$. With the notations $E$, $E_{0}$, $E_{1}$, $E_{2}$ of equation (\ref{e:E}) this latter condition means that $K$ operates on $V$ through $E\subset \End_{k}(V)$, and in what follows we shall use the same letter, $f$, to denote an element of $K$ and its image in $E$''.

\begin{thm}(Definition of residue)\cite{Ta}[Thm. 1]\label{t:resandtr}
In the situation just described, there exists a unique $k$-linear ``residue map'':
$$\res_{V_{+}}^{V}\colon \Omega_{K/k}^1 \to k \,,$$
such that for each pair of elements $f$ and $g$ in $K$ we have:
$$\res_{V_{+}}^V(fdg)=\tr_{V}([f_{1},g_{1}])$$
for every pair of endomorphisms $f_{1}$ and $g_{1}$ in $E$ satifying the following conditions:
\begin{enumerate}
\item Both $f\equiv f_{1}$ (mod $E_{2}$) and $g\equiv g_{1}$ (mod $E_{2}$);
\item Either $f_{1}\in E_{1}$ or $g_{1}\in E_{1}$.
\end{enumerate}
\end{thm}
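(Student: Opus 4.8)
The plan is to build the residue map out of the universal property of $\Omega^1_{K/k}$: I will define a pairing $\lambda\colon K\times K\to k$, show it is $k$-bilinear, that it kills constants in the second slot, and that it obeys the Leibniz identity $\lambda(f,gh)=\lambda(fg,h)+\lambda(fh,g)$, so that it factors through the surjection $K\times K\to\Omega^1_{K/k}$, $(f,g)\mapsto f\,dg$; uniqueness will then be automatic, since the elements $f\,dg$ span $\Omega^1_{K/k}$ over $k$.

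First I would fix, for each $f\in K$, a lift $f_1\in E_1$ with $f\equiv f_1\pmod{E_2}$ — this exists because $E_1+E_2=E$ and $K$ acts through $E$ (Proposition \ref{p:Tate1}). For such an $f_1$ and the tautological image $g\in E$ of $g\in K$, the commutator $[f_1,g]$ lies in $E_1$ (a two-sided ideal) and equals $[f_1-f,g]$ since $f$ and $g$ commute in $E$ ($K$ being commutative); as $f_1-f\in E_2$, also $[f_1,g]\in E_2$, hence $[f_1,g]\in E_0$ and $\tr_V([f_1,g])$ is defined. I set $\lambda(f,g):=\tr_V([f_1,g])$, and I first check this is independent of the choice of $f_1\in E_1$: two such lifts differ by an element of $E_1\cap E_2=E_0$, whose commutator with $g\in E$ has zero trace by Proposition \ref{p:Tate2}.

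The main step, and the point I expect to be most delicate, is to identify $\lambda(f,g)$ with $\tr_V([f_1',g_1'])$ for \emph{every} admissible pair $(f_1',g_1')$, i.e. with $f_1'\equiv f$, $g_1'\equiv g\pmod{E_2}$ and $f_1'\in E_1$ or $g_1'\in E_1$. Writing $[f_1',g_1']-[f_1,g]=[f_1'-f_1,g_1']+[f_1,g_1'-g]$, the second term has zero trace by Proposition \ref{p:Tate2} (it is of the form $[E_1,E_2]$); for the first term I split cases — if $f_1'\in E_1$ then $f_1'-f_1\in E_0$ and $g_1'\in E$, and if instead $g_1'\in E_1$ then $f_1'-f_1\in E_2$ and $g_1'\in E_1$ — and in either case Proposition \ref{p:Tate2} gives zero trace, and moreover shows $[f_1',g_1']\in E_0$, so that the formula in the statement makes sense. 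Thus $\lambda$ is well defined and coincides with the prescribed formula.

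It then remains to verify: (i) $k$-bilinearity of $\lambda$, using that $E_1$-lifts may be added and scaled, that $[\,\cdot\,,\,\cdot\,]$ is bilinear, and that $\tr_V$ is $k$-linear on $E_0$ (Proposition \ref{p:Tate1}); (ii) $\lambda(f,\mu)=0$ for $\mu\in k$, since $\mu$ acts as the central operator $\mu\cdot 1$, so $[f_1,\mu]=0$; (iii) the Leibniz identity, for which I take $f_1\in E_1$ lifting $f$, note that $f_1g$ and $f_1h$ lie in $E_1$ and lift $fg$ and $fh$, and compute, using $gh=hg$ in $E$, the operator identity $[f_1g,h]+[f_1h,g]-[f_1,gh]=[[f_1,g],h]$; since $[f_1,g]\in E_0$, Proposition \ref{p:Tate2} annihilates the trace of the right-hand side, which is exactly (iii). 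Feeding (i), (ii) and (iii) into the presentation of $\Omega^1_{K/k}$ as the free $K$-module on symbols $dg$ modulo $k$-linearity of $d$ and the Leibniz rule yields a well-defined $k$-linear map $\res_{V_+}^V\colon\Omega^1_{K/k}\to k$ with $\res_{V_+}^V(f\,dg)=\lambda(f,g)=\tr_V([f_1,g_1])$ for admissible lifts; uniqueness holds because the $f\,dg$ generate $\Omega^1_{K/k}$ over $k$. Everything outside the case analysis in the well-definedness step is routine bookkeeping with the ideal structure of $E$ and Proposition \ref{p:Tate2}.
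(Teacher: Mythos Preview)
The paper does not supply its own proof of this theorem: it is quoted verbatim from Tate, and the only argument given is the short paragraph after the statement explaining why $[f_1,g_1]\in E_1\cap E_2=E_0$, so that $\tr_V([f_1,g_1])$ is defined. Your proposal is a complete and correct proof, and it is essentially Tate's original argument: fix an $E_1$-lift, check independence of all admissible choices via Proposition~\ref{p:Tate2}, then verify $k$-bilinearity and the Leibniz identity $\lambda(f,gh)=\lambda(fg,h)+\lambda(fh,g)$ through the operator identity $[f_1g,h]+[f_1h,g]-[f_1,gh]=[[f_1,g],h]$, whose trace vanishes because $[f_1,g]\in E_0$. There is nothing to correct; your case analysis in the well-definedness step and your use of the ideal structure of the $E_i$ are exactly what is needed.
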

 
``Note that given $f$ and $g$ in $K$ it is always posible to find $f_{1}$ and $g_{1}$ satisfying $(1)$ and $(2)$ because $E=E_{1}+E_{2}$  (see Proposition \ref{p:Tate1}). Thus, $[f_{1},g_{1}]\in E_{1}$ by $(2)$ and $[f_{1},g_{1}]=[f,g]=0$ (mod $E_{2}$) by $(1)$. Hence, by Proposition \ref{p:Tate1} $[f_{1},g_{1}]\in E_{1}\cap E_{2}=E_{0}$ and therefore its trace is well-defined (recall that $E_{0}$ is finite potent)''.

Our next task consists of giving a multiplicative analogue of Tate's residue. In order to obtain this, let us first recall the \emph{Zassenhaus formula}:
\begin{equation}\label{e:Zassenhaus}
\exp_{z} (f_{1}+g_{1})=\exp_{z}(f_{1})\exp_{z}(g_{1})\prod_{i\geq 1}\exp_{z^{i+1}} \big(\frac{-C_{i}(f_{1},g_{1})}{(i+1)!}\big)\,,
\end{equation}

where:
$$C_{1}=[f_{1},g_{1}]\,,\quad C_{2}=2\big[[f_{1},g_{1}],g_{1}\big]-\big[f_{1},[f_{1},g_{1}]\big]\,, $$
$$C_{3}=3\big[[[f_{1},g_{1}],g_{1}],g_{1}\big]-3\big[[f_{1},[f_{1},g_{1}]],g_{1}\big]+\big[f_{1},[f_{1},[f_{1},g_{1}]\big]\,,\quad \dots $$

\begin{rem}\label{r:C_{i}}
It is worth noticing that since $[f_{1},g_{1}]\in E_{0}$ then $\tr_{V}(C_{1})$ is well-defined and $\tr_{V}(C_{i})=0$ for all $i\geq 2$ by Proposition \ref{p:Tate2}.
\end{rem}

 \begin{thm}\label{t:cocycle}
Let $f,g\in K$. The function:
 \begin{align*}
c_{V_{+}}\colon K\times K&\to k((z))^\times \\
(f,g)&\mapsto c_{V_{+}}(f,g):=\Det_{V_{z}}^{k((z))}\big(\exp_{z}(f_{1})\exp_{z}(g_{1})\exp_{z}(-(f_{1}+g_{1}))\big)
\end{align*}
is a $2$-cocycle of $K$ with coefficients in $k((z))^\times$, for every pair endomorphisms $f_{1}$ and $g_{1}$  in $E$ satisfying:
\begin{itemize}
\item $f\equiv f_{1}$ (mod $E_{2}$) and $g\equiv g_{1}$ (mod $E_{2}$);
\item Either $f_{1}\in E_{1}$ or $g_1\in E_{1}$.
\end{itemize}
In particular, there exists a central extension of groups:
$$1\to k((z))^\times \to  \widetilde K_{V_{+}}\to K\to 1\,.$$
\end{thm}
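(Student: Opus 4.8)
The plan is to verify that $c_{V_+}$ is well-defined and satisfies the $2$-cocycle identity, whence the central extension is obtained by the standard construction. First I would establish \emph{independence of the choice of lifts} $f_1,g_1$. If $f_1,f_1'$ are two lifts of $f$ with $f_1-f_1'\in E_2$, and similarly for $g$, then one checks using the Zassenhaus formula \eqref{e:Zassenhaus} that the correction terms $\exp_z(f_1)\exp_z(g_1)\exp_z(-(f_1+g_1))=\prod_{i\geq 1}\exp_{z^{i+1}}\big(\tfrac{-C_i(f_1,g_1)}{(i+1)!}\big)$, where each $C_i(f_1,g_1)$ lies in $E_0$ (since $[f_1,g_1]\in E_0$ by the argument following Theorem \ref{t:resandtr}, and the higher commutators land in $E_0$ as well by Proposition \ref{p:Tate1}). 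Crucially, by Remark \ref{r:C_{i}} we have $\tr_V(C_i)=0$ for $i\geq 2$, so by Proposition \ref{p:detandtr} and Example \ref{ex:C_{i}} only the $i=1$ term contributes and $c_{V_+}(f,g)=\Det_{V_z}^{k((z))}\big(\exp_{z^2}(\tfrac{-[f_1,g_1]}{2})\big)=\exp_{z^2}\big(\tfrac{-1}{2}\tr_V([f_1,g_1])\big)$. Since $\tr_V([f_1,g_1])=\res_{V_+}^V(f\,dg)$ is independent of the choice of lifts by Theorem \ref{t:resandtr}, so is $c_{V_+}(f,g)$. This also shows $c_{V_+}(f,g)$ takes values in $k((z))^\times$ (it is $\exp_{z^2}$ of a scalar).

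Next I would verify the \emph{cocycle identity} $c_{V_+}(g,h)\,c_{V_+}(f,gh)=c_{V_+}(fg,h)\,c_{V_+}(f,g)$ (additively, after applying the isomorphism $\exp_{z^2}\colon k\xrightarrow{\sim}$ image, this becomes the statement that $(f,g)\mapsto -\tfrac12\tr_V([f_1,g_1])$ is an additive $2$-cocycle on $K$ with values in $k$, where now $K$ is regarded as a group under multiplication). Here I would follow Tate's own argument: choosing lifts compatibly (using $E=E_1+E_2$ and the ideal structure from Proposition \ref{p:Tate1}), the expression $\tr_V([f_1,g_1])$ is, up to the coboundary ambiguity already handled, bilinear-like in a way that makes the alternating sum telescope. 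Concretely, picking $f_1\in E_1$ (or $g_1\in E_1$) throughout and using $\tr_V([\alpha,\beta\gamma])=\tr_V([\alpha\beta,\gamma])+\tr_V([\gamma\alpha,\beta])$-type relations coming from Proposition \ref{p:Tate2} (the cyclic/Jacobi identity for traces of finite potent commutators), one reduces the cocycle identity to an identity among traces of elements of $E_0$. Alternatively — and this is cleaner — one argues directly at the level of determinants: $c_{V_+}$ is manifestly a coboundary-type expression $\partial\beta$ where $\beta(f)=\Det_{V_z}^{k((z))}(\exp_z(f_1))$ would work if the lift $f\mapsto f_1$ were a group homomorphism, and the failure of that is precisely measured by $c_{V_+}$; the cocycle condition then follows formally from associativity in $\aut_{k((z))}(V_z)$ together with the multiplicativity of $\Det_{V_z}^{k((z))}$ on the relevant finite potent pieces (Proposition \ref{p:detprod}, Proposition \ref{p:detsum}).

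Finally, once $c_{V_+}\in Z^2(K,k((z))^\times)$ is established, the central extension
$$1\to k((z))^\times \to \widetilde K_{V_+}\to K\to 1$$
is the standard one: set $\widetilde K_{V_+}=k((z))^\times\times K$ as a set, with multiplication $(\lambda,f)\cdot(\mu,g)=(\lambda\mu\,c_{V_+}(f,g),fg)$; associativity is exactly the cocycle identity, $k((z))^\times$ sits in the center as $(\lambda,1)$, and the quotient by it is $K$.

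\textbf{Main obstacle.} The main difficulty is the cocycle identity. The reduction of $c_{V_+}$ to $\exp_{z^2}\big(-\tfrac12\tr_V([f_1,g_1])\big)$ via Zassenhaus is routine given the results already proved (Remark \ref{r:C_{i}}, Example \ref{ex:C_{i}}, Proposition \ref{p:detandtr}), but one must then show that $(f,g)\mapsto \tr_V([f_1,g_1])$ genuinely satisfies the $2$-cocycle relation for the \emph{multiplicative} group structure on $K$ — this is where the subtlety of choosing compatible lifts $f_1\in E_1$ and controlling the commutators $[f_1,(gh)_1]$ versus $[(fg)_1,h_1]$ modulo $E_0$-elements of zero trace really bites, and it is essentially a repackaging of the well-definedness and additivity arguments underlying Tate's residue map (Theorem \ref{t:resandtr}). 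I expect this to parallel, step for step, Tate's proof that $\res_{V_+}^V$ is well-defined, the point being that the present cocycle \emph{is} the multiplicative incarnation of that residue.
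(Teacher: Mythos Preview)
Your reduction via the Zassenhaus formula to $c_{V_+}(f,g)=\exp_{z^2}\bigl(\tfrac{1}{2}\tr_V([f_1,g_1])\bigr)$ is essentially what the paper does (note: the sign is $+\tfrac12$, not $-\tfrac12$; the paper applies Zassenhaus to $\exp_z(-(f_1+g_1))=\exp_z((-g_1)+(-f_1))$, giving $-C_1(-g_1,-f_1)/2=+\tfrac12[f_1,g_1]$, whereas your direct rewriting of $\exp_z(f_1)\exp_z(g_1)$ ignores the non-commutativity with $\exp_z(-(f_1+g_1))$). Your use of Theorem~\ref{t:resandtr} to get independence of lifts is fine and slightly more explicit than the paper.

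The real gap is in your treatment of the cocycle identity. You write it as $c_{V_+}(g,h)\,c_{V_+}(f,gh)=c_{V_+}(fg,h)\,c_{V_+}(f,g)$ and then worry about the \emph{multiplicative} group structure on $K$, invoking Jacobi-type trace identities and compatible lifts for products. But the central extension here is of the \emph{additive} group $(K,+)$: the cocycle condition the paper checks is
\[
c_{V_+}(f,g)\cdot c_{V_+}(f+g,h)=c_{V_+}(g,h)\cdot c_{V_+}(f,g+h).
\]
After the reduction to $\exp_{z^2}\bigl(\tfrac12\tr_V([f_1,g_1])\bigr)$, this follows immediately from the bilinearity identity
\[
[f_1,g_1]+[f_1+g_1,h_1]=[g_1,h_1]+[f_1,g_1+h_1]
\]
together with Proposition~\ref{p:detsum} (additivity of the determinant on exponentials of $E_0$-elements). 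No cyclic trace identities, no delicate lift-compatibility for products --- the ``main obstacle'' you identify is a phantom created by reading the wrong group law on $K$. Once you correct this, your proof collapses to exactly the paper's argument.
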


\begin{proof}
Let us first check that $c_{V_{+}}(f,g)$ is well-defined. If we denote:
$$e(f,g)=\exp_{z}(f_{1})\exp_{z}(g_{1})\exp_{z}(-(f_{1}+g_{1}))$$
and make use of the Zassenhaus formula $(\ref{e:Zassenhaus})$, we have:
$$e(f,g)=\prod_{i\geq 1}\exp_{z^{i+1}} \big(\frac{-C_{i}(-g_{1},-f_{1})}{(i+1)!}\big)\,,$$
hence:
$$c_{V_{+}}(f,g)=\Det_{V_{z}}^{k((z))}\big(\prod_{i\geq 1}\exp_{z^{i+1}} \big(\frac{-C_{i}(-g_{1},-f_{1})}{(i+1)!}\big)\big)\,.$$
Using Remak \ref{r:C_{i}} and Example \ref{ex:C_{i}} (with $m=2$) we may conclude that:
$$c_{V_{+}}(f,g)=\Det_{V_{z}}^{k((z))}\big(\prod_{i\geq 1}\exp_{z^{i+1}} \big(\frac{-C_{i}(-g_{1},-f_{1})}{(i+1)!}\big)\big)=\Det_{V_{z}}^{k((z))}\big(\exp_{z^2}(\frac{1}{2}[f_{1},g_{1}])\big)$$
is well-defined since $[f_{1},g_{1}]\in E_{0}$.


It is now clear that $c_{V_{+}}$ can be regarded as a $2$-cochain of the standard complex $C^{\bullet}(K,k((z))^\times)$, and thus it suffices to check the cocycle condition:
$$c_{V_{+}}(f,g)\cdot c_{V_{+}}(f+g,h)=c_{V_{+}}(g,h)\cdot c_{V_{+}}(f,g+h)$$
for all $f,g,h \in K$. However, this follows directly from Proposition \ref{p:detprod} taking into account that:
$$[f_{1},g_{1}]+[f_{1}+g_{1},h_{1}]=[g_{1},h_{1}]+[f_{1},g_{1}+h_{1}]\,,$$
for every endomorphisms $f_{1}$, $g_{1}$ and $h_{1}$ in $E$ satisfying:
\begin{itemize}
\item $f\equiv f_{1}$ (mod $E_{2}$), $g\equiv g_{1}$ (mod $E_{2}$) and $h\equiv h_{1}$ (mod $E_{2}$);
\item At least two of $f_{1}$, $g_{1}$, $h_{1}$ lie on $E_{1}$.
\end{itemize}
\end{proof}

\begin{rem}
If we write $f=\begin{pmatrix} a & b \\ c& d \end{pmatrix}$, with respect to a fixed decomposition of $V=V_{+}\oplus V_{-}$, we can assume  that $f_{1}=\begin{pmatrix} a & 0 \\ 0& 0 \end{pmatrix}$. Therefore, the definition of the cocycle $c_{V_{+}}$ is inspired by Segal and Wilson's cocycle of \cite[Prop.3.6]{SW} (see also equation (\ref{eq:s-g-cocy}) in Appendix B).
\end{rem}

\begin{rem}
Let us note that by Proposition \ref{p:detandtr} and Theorem \ref{t:resandtr}, Theorem \ref{t:cocycle} states that:
{\small$$c_{V_{+}}(f,g)=\Det_{V_{z}}^{k((z))}\big(\exp_{z^2}(\frac{1}{2}[f_{1},g_{1}])\big)=\exp_{z^{2}}\big(\frac{1}{2}\tr_{V}([f_{1},g_{1}])\big)=\exp_{z^{2}}\big(\frac{1}{2}\res_{V_{+}}^{V}(fdg)\big)\,.$$}
Therefore, the cocycle $c_{V_{+}}$ is a multiplicative analogue for the abstract residue defined by Tate in \cite{Ta}. 

\end{rem}

\begin{rem}\label{r:SG}
Notice that the commutator of the central extension:
$$1\to k((z))^\times \to  \widetilde K_{V_{+}}\to K\to 1\,.$$
is given by:
$$\{f,g\}_{V_{+}}^{V}=\exp_{z^{2}}\big(\tr_{V}([f_{1},g_{1}])\big)\,,$$
and therefore has a similar shape to  Segal and Wilson's pairing given in \cite{SW} for loop groups (see \ref{ss:SW}).  
\end{rem}
\begin{rem}
It can be checked that:
$$\Det_{V_{z}}^{k((z))}\big(\exp_{z}(f_{1})\exp_{z}(g_{1})\exp_{z}(-f_{1})\exp_{z}(-g_{1})\big)=\exp_{z^{2}}\big(\tr_{V}([f_{1},g_{1}])\big)\,,$$
which generalizes the well-known formula:
$$\Det \big(\exp(A)\exp(B)\exp(-A)\exp(-B)\big)=\exp\big(\tr[A,B])\big)$$
for bounded operators $A$ and $B$ with trace-class commutator $[A,B]$ (see \ref{ss:det&Hilb} for the analytic approach).
\end{rem}

\subsection{Properties of $c_{V_{+}}$ and reciprocity law.}\quad

Using Tate's properties $(R_{1}), \dots ,(R_{5})$ for the residue $\res_{V_{+}}^{V}$ (see \cite{Ta}), we have the following:
\begin{itemize}
\item [($C_{1}$)] If $A\sim A'$, then $c_{A}(f,g)=c_{A'}(f,g)$. That is, the cocycle depends only on the commensurability class of $V_{+}$. In particular $\widetilde K_{A}=\widetilde K_{A'}$.
\item [($C_{2}$)] If $f(A)\subset A$ and $g(A)\subset A$, then $c_{A}(f,g)=1$. In particular, the central extension $\widetilde K_{A}$ is trivial.
\item [($C_{3}$)] $c_{A}(1,g)=1$ for all $g\in K$.
\item [($C_{4}$)] If $g$ is invertible in $K$ and $h\in K$ is such that $h(A)\subset A$, then:
$$c_{A}(hg^{-1},g)=\exp_{z^{2}}\big(\frac{1}{2}\tr_{A/A\cap gA}(h)\big)\cdot \exp_{z^{2}}\big(-\frac{1}{2}\tr_{gA/A\cap gA}(h)\big)\,.$$
\item [($C_{5}$)] If $B$ is another $k$-subspace of $V$ such that $f(B)<B$ for all $f\in K$, then:
$$c_{A+B}(f,g)\cdot c_{A\cap B}(f,g)=c_{A}(f,g)\cdot c_{B}(f,g)\,.$$
\end{itemize}

Using these properties and the Corollary of \cite[Theorem 3]{Ta} we have:
 \begin{cor}(Reciprocity law)
Let $X$ be a non-singular and complete curve over $k$ and let $K$ denote its function field. For a closed point $x\in X$, let us write $A_{x}=\widehat \O_{X,x}$ and $V_{x}=(\widehat \O_{X,x})_{0}$. Then:
 $$\prod_{x\in C}c_{A_{x}}(f,g)=1 \quad \mbox{ for all } f,g\in K\,,$$
 where the product is taken over all closed points $x$ of $X$.
\end{cor}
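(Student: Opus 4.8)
The plan is to deduce the reciprocity law $\prod_{x\in X}c_{A_x}(f,g)=1$ from the product formula for Tate's abstract residue, combined with the multiplicative translation established in the preceding remark, namely that $c_{A}(f,g)=\exp_{z^2}\bigl(\tfrac12\res_{A}^{V}(fdg)\bigr)$. First I would recall the global setup of Tate for curves: take $V=K$ as a $k$-vector space, let $K$ act on itself by multiplication, and for each closed point $x\in X$ take $V_x=(\widehat{\O}_{X,x})_0$ (the chosen complement datum near $x$); then for every $f,g\in K$ the local residue $\res_{V_x}^{K}(f\,dg)$ coincides with the usual residue $\res_x(f\,dg)$ of the rational differential $f\,dg$ at $x$, and the Corollary to \cite[Theorem 3]{Ta} gives $\sum_{x\in X}\res_{V_x}^{K}(f\,dg)=0$, the sum being finite since $f\,dg$ has only finitely many poles.

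The next step is purely formal. For all but finitely many $x$ one has $f(A_x)\subset A_x$ and $g(A_x)\subset A_x$ (the poles of $f$ and $g$ are finite in number), so by property $(C_2)$ the factor $c_{A_x}(f,g)=1$ there; hence the product $\prod_{x\in X}c_{A_x}(f,g)$ is really a finite product and is well defined in $k((z))^\times$. Applying the translation formula termwise,
\begin{equation*}
\prod_{x\in X}c_{A_x}(f,g)=\prod_{x\in X}\exp_{z^2}\Bigl(\tfrac12\res_{V_x}^{K}(f\,dg)\Bigr)=\exp_{z^2}\Bigl(\tfrac12\sum_{x\in X}\res_{V_x}^{K}(f\,dg)\Bigr),
\end{equation*}
where the second equality uses the group homomorphism property of $\exp_{z^2}\colon (k,+)\to k((z))^\times$ (which is why $f\,dg$, not just $f$ and $g$, is the natural variable: the map $(f,g)\mapsto c_{A_x}(f,g)$ factors through $\Omega^1_{K/k}$ exactly as Tate's residue does). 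Now invoke the global residue theorem above: the exponent vanishes, so the whole product equals $\exp_{z^2}(0)=1$.

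I would then remark that the hypotheses needed for the translation formula are in force at every $x$: for each closed point one chooses $f_1,g_1\in E_{V_x}$ with $f\equiv f_1$, $g\equiv g_1\pmod{E_2}$ and at least one of $f_1,g_1$ in $E_1$ (possible since $E=E_1+E_2$ by Proposition \ref{p:Tate1}), and then $c_{A_x}(f,g)=\exp_{z^2}\bigl(\tfrac12\tr_K([f_1,g_1])\bigr)=\exp_{z^2}\bigl(\tfrac12\res_{V_x}^{K}(f\,dg)\bigr)$ by Theorem \ref{t:cocycle}, the intervening remark, and Theorem \ref{t:resandtr}. The main obstacle, such as it is, is bookkeeping rather than conceptual: one must check that the choice of complements $V_x=(\widehat{\O}_{X,x})_0$ genuinely reproduces Tate's local residue at $x$ (so that the Corollary of \cite[Theorem 3]{Ta} applies verbatim) and that the finiteness of the support of the product is compatible with $(C_1)$ — i.e.\ that replacing $A_x=\widehat{\O}_{X,x}$ by a commensurable lattice does not change any factor — which is exactly property $(C_1)$. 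Once these are noted, the proof is a two-line application of the additive reciprocity law under the exponential.
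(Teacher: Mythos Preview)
Your proposal is correct and follows exactly the approach the paper intends: the paper gives no explicit proof beyond the line ``Using these properties and the Corollary of \cite[Theorem~3]{Ta} we have,'' and you have simply spelled out those two ingredients --- the translation $c_{A_x}(f,g)=\exp_{z^2}\bigl(\tfrac12\res_{V_x}^{K}(f\,dg)\bigr)$ from the preceding remark, together with Tate's additive residue theorem --- plus the finiteness check via property $(C_2)$.
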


 \begin{rem}
 Let us remark that this last section is an approach to a unified theory of local symbols over fields of characteristic zero. Using the algebraic definition of the determinant for finite potent endomorphisms developed along the paper, Theorem \ref{t:cocycle} reveals an equivalence between the theory of Segal and Wilson for loop groups (see \ref{ss:SW}) and the theory of abstract residues given by Tate. 
 
 Further research will be done in order to extend this theory to characteristic $p>0$ and modules over artinian local rings for the purpose of obtaining a unified theory of arithmetic symbols from infinite determinants. 
\end{rem}

\appendix
\section{Analytic approach.}\label{ss:det&Hilb}\quad
 
  Let $H$ be a separable complex Hilbert space. Given an arbitrary finite potent linear operator $\varphi\colon H
\longrightarrow H$, bearing in mind that it has an annihilating
polynomial, it follows again from \cite{LB} that $H$ admits a
Jordan basis for $\varphi$, with arguments similar to those of  M.
Argerami, F. Szechtman and R. Tifenbach in \cite{AST}, one has
that the Hilbert space $H$ admits a $\varphi$-invariant
decomposition $H = W_\varphi \oplus U_\varphi$ such that
$\varphi_{\vert_{U_\varphi}}$ is nilpotent, $W_\varphi$ is finite
dimensional, and $\varphi_{\vert_{W_\varphi}} \colon W_\varphi
\overset \sim \longrightarrow W_\varphi$ is an isomorphism.

    Thus, since all the expressions referred to in the previous
algebraic approach to infinite determinants consist of finite sums
of well-defined analytic elements of $\varphi$, the above
definition and properties of $\Det^{\mathbb C}_H (1 + \varphi) \in
{\mathbb C}$ are valid within the analytic formalism of Hilbert
spaces.

   In fact, in \cite{Si} B. Simon defined determinants of trace
class operators $B$ on a separable Hilbert space from the formula:
\begin{equation} \label{eq:Simon}\Det_1 (1 + \mu B) =1+ \sum_{n=1}^{\infty} \mu^n
\tr(\bigwedge^n(B))\,,\end{equation}
and according to \cite{RS} and \cite{Si}, for
each trace class operator $B$ one has that the infinite determinant
$\Det_1 (1 + B)$ satisfies the following properties:
\begin{itemize}
\smallskip



\item If $A$ and $B$ are trace class, then: $$\Det_1 (1 + A) \cdot
\Det_1 (1 + B) = \Det_1 (1 + A + B + AB) = \Det_1 (1 + B) \cdot
\Det_1 (1 + A)\, .$$

\smallskip

\item The operator $1 + B$ is invertible if and only if $\Det_1 (1
+ B) \ne 0$.

\smallskip

\item If $U$ is unitary, then:
$$\Det_1 (U^{-1} (1 + B) U) = \Det_1 (1 + U^{-1}BU) = \Det_1 (1 + B)\,
.$$
\end{itemize}

Corollary \ref{cor:deter-char} shows that our definition of determinants of
finite potent endomorphisms coincides with expression
(\ref{eq:Simon}) when we replace the usual trace of linear
operators of Hilbert spaces with Tate's definition of traces of
finite potent endomorphisms. The above properties of $\Det_{1}(1+B)$ correspond to Lemma \ref{l:propret}, Proposition \ref{prop:invertible}, and Lemma \ref{l:propret2} (respectively) in the finite potent case. 


    Analogously, let $\{\lambda_i(B)\}_{i=1}^{N(B)}$ be the listing of all nonzero eigenvalues
of a trace class operator $B$, counted up to algebraic
multiplicity. In \cite{DS} Dunford and Schwartz defined the
infinite determinant $\Det_1 (1 + \mu B)$ as the expression:
\begin{equation}\label{eq:eigen}
\Det_1 (1 + \mu B) = \prod_{i=1}^{N(B)} (1 + \mu\lambda_i(B))\,,
\end{equation}
\noindent which coincides with the statement of Proposition
\ref{prop:eigen}. This is another important equivalence
between the classical results of well-known analytic treatments of
infinite determinants and the above algebraic theory.

    Furthermore, if $\lambda_1, \dots, \lambda_N$ are  all the nonzero
eigenvalues of a finite potent and trace class operator $\varphi$,
the algebraic and the analytic versions of the Lidski's Theorem
imply that:
$$\tr_{V} [\varphi] =  \sum_{i=1}^N \lambda_i = \tr [\varphi]\,
,$$\noindent where $\tr_{V} [\varphi]$ is the trace of $\varphi$
as a finite potent endomorphism and $\tr [\varphi]$ is the trace
of $\varphi$ as a linear operator of a Hilbert space.

    Accordingly, from expression (\ref{eq:eigen}) and
Proposition \ref{prop:eigen} we can also deduce that if $H$ is a separable complex Hilbert
space and $\varphi\colon H \longrightarrow H$ is a linear operator
that is finite potent and of trace class, then:
$$\Det^{\mathbb C}_H (1 + \varphi) = \Det_1 (1 + \varphi)\,.$$

    Therefore, the determinant $\Det^k_V(1 + \varphi)$ is an
extension to finite potent endomorphisms on arbitrary vector
spaces of the most usual definition of infinite determinants of
trace class operators on separable Hilbert spaces.

\begin{rem} Let $H$ again be a separable complex Hilbert
space and let $\varphi\colon H \to H$ be a finite potent
endomorphism. If $\lambda_1, \dots, \lambda_N$ are all the nonzero
eigenvalues of $\varphi$, since $\tr_V (\varphi^n) = \lambda_1^n +
\dots + \lambda_N^n = p_n(\lambda_1, \dots, \lambda_N)$, similar
to trace class operators, it follows from the statement of
Corollary \ref{cor:eigen} and from the properties of symmetric
functions (see \cite{Mc}, Chapter 1) that:
\begin{itemize}
\item If $\mu \in {\mathbb C}^\times$ and $\vert \mu \vert \cdot
\text{max} \, \vert \lambda_i\vert < 1$, then
$$\Det^{\mathbb C}_H (1 + \mu \varphi) = \exp (\tr_H [\ln (1 +
\mu \varphi)])\, .$$

    Although the Tate's trace for arbitrary finite
potent endomorphisms is not linear (\cite{PR}), note that $$\tr_H
[\ln (1 + \mu \varphi)] = - \sum_{r=1}^{\infty} \frac{\mu^r}{r}
\tr_H [(-\varphi)^r]\, ,$$\noindent because the
$\varphi$-invariant AST-decomposition of $V$ coincides with the
$\varphi^r$-invariant AST-decomposition of $V$ for all r.

\smallskip

\item  If $\alpha_0 (\varphi) := 1$, $\alpha_1 (\varphi) := \tr_H
(\varphi)$ and
$$\alpha_m (\varphi) = \begin{vmatrix} \tr_H (\varphi) & m-1 & 0 & \dots & 0 & 0 \\  \tr_H(\varphi^2)
& \tr_H (\varphi) & m-2 & \dots& 0 & 0 \\ \tr_H (\varphi^3) &
\tr_H (\varphi^2) & \tr_H (\varphi) & \ddots & 0 & 0\\ \vdots &
\vdots & \vdots & \ddots & \ddots & \vdots \\ \tr_H
(\varphi^{m-1)}) & \tr_H
(\varphi^{m-2)}) & \tr_H (\varphi^{m-3)}) & \dots & \tr_H (\varphi) & 1 \\
\tr_H (\varphi^{m)}) & \tr_H (\varphi^{m-1)}) & \tr_H
(\varphi^{m-2)}) & \dots & \tr_H (\varphi^2) & \tr_H (\varphi)
\end{vmatrix}$$
\noindent for $m \geq 2$, similar to the ``Plemelj-Smithies"
formula, we have that:
\begin{equation} \label{eq:PS-hilb} \Det^{\mathbb C}_H (1 + \mu \varphi) = \sum_{m= 0} ^\infty \mu^m \frac{\alpha_m
(\varphi)}{m!}\, .\end{equation}
\end{itemize}
\end{rem}

\begin{rem}  It is
important to emphasize that there is no relationship of inclusion
between trace class and finite potent endomorphisms, as is deduced
from the following example.

    Let $H$ be a separable complex Hilbert space with an orthonormal
basis $\{e_1,e_2,\dots,$ $e_n,\dots\}$. If we consider the
endomorphisms $\phi_1, \phi_2 \in \ed_{\mathbb C} H$ defined by:
$$\phi_1 (e_i) = \left \{ {\begin{aligned} e_{j+1} \quad
&\text{ if } \quad j \text{ is even } \\ 0 \quad
&\text{ if } \quad j \text{ is odd }\end{aligned}} \right .$$ 
and $\phi_2 (e_s) = \frac{1}{s^{2}}e_{s}$ for all s, then we have that $\phi_1$ is not of trace-class, because $\phi_{1}^{*}\phi$ is an infinite rank projection and, as $\phi_{1}^{2}=0$, it is nilpotent (and thus finite potent). And $\phi_2$ is a trace class operator such that it is not finite potent, since every power of $\phi_{2}$ has dense range, and so it is not finite-rank.
\qed
\end{rem}

\begin{rem} If $\{\tilde{\lambda_i}\}_{i=1}^{N({\tilde A})}$ are the eigenvalues
of an operator ${\tilde A}$ of a separable Hilbert space $H$
(repeated again according to their algebraic multiplicities), the
Carleman-Fredholm determinant is defined as:
$$\Det_2 (1 + {\tilde A}) = \prod_{i= 1}^{N({\tilde A})}(1 +
\tilde{\lambda_i})\exp(-\tilde{\lambda_i})\, ,$$\noindent and this
product is known to converge for  ``Hilbert-Schmidt'' operators.

    Hence, this is a different method from the one described above for defining an
infinite determinant of trace class operators.
\end{rem}

\begin{rem}\label{r:GGK} In 2001, in \cite{GGK}  I. Gohberg, S. Goldberg and N. Krupnik
offered a generalized definition of determinants for
trace-potent operators on a separable complex Hilbert space $H$.
Indeed, a bounded linear operator ${\bar A}$ on $H$ is called
``trace-potent'' if there exists an integer $m > 1$ such that
${\bar A}^m$ is of trace class. Let $m$ be the smallest number for
which ${\bar A}^m$ is of trace class; they defined a m-regularized
determinant by the following equality:
$${\tilde \Det_m} (1 -  \mu{\bar A}) = \prod_{i= 1}^{N({\bar A})}(1
- \mu{\bar {\lambda_i}})E_m(\mu{\bar {\lambda_i}})\, ,$$\noindent
where $\{{\bar {\lambda_i}}\}_{i=1}^{N({\bar A})}$ are the
eigenvalues of ${\bar A}$ (repeated again according to their
algebraic multiplicities), $\mu \in {\mathbb C}$ and: $$E_m
({\delta}) = \exp \big ( \sum_{j=1}^{m-1} \frac{\delta^j}j \big
)\, .$$
    Note that if $\lambda = -1$ and $m = 2$, then ${\tilde \Det_2}$ coincides
with the Carleman-Fredholm determinant $\Det_2$ referred to above.

 Notice that each finite potent endomorphism of a Hilbert
space is also trace-potent (Remark \ref{r:GGK}), and we should
note that our definition of determinant is different from that given
in \cite{GGK}.

\end{rem}

\section{The Segal-Wilson Pairing}\label{ss:SW}

Let $H$ be a separable complex Hilbert space with a given
decomposition $H = H_+\oplus H_-$ as the direct sum of two
infinite dimensional orthogonal closed subspaces.

    The Grassmannian $\gr (H)$ is the set of all closed subspaces
$W$ of $H$ such that
\begin{itemize}
\item the orthogonal projection $\text{pr} \colon W
\longrightarrow H_+$ is a Fredholm operator (i.e. it has finite
dimensional kernel and cokernel), and \item the orthogonal
projection $\text{pr} \colon W \longrightarrow H_-$ is a compact
operator.
\end{itemize}

Let us write the operators $g\in \gl (H)$ in the block form
$$g = \begin{pmatrix} a & b \\ c & d \end{pmatrix}$$\noindent with
respect to the decomposition $H = H_+\oplus H_-$. The
\textit{restricted general linear group} $\gl_{res}(H)$ is the
closed subgroup of $\gl (H)$ consisting of operators $g$ whose
off-diagonal blocks $b$ and $c$ are compact operators. The blocks
$a$ and $d$ are therefore automatically Fredholm.

    Any continuous non-vanishing function $f$ on $S^1$ defines an
invertible multiplication operator, again written $f$, on $H$.
Indeed, if $\Gamma$ denotes the group of continuous maps $S^1
\longrightarrow {\mathbb C}^\times$, regarded as multiplication
operators on $H$, then $\Gamma \subset \gl_{res}(H)$.

    We now consider the subgroup $\Gamma_+$ of $\Gamma$
consisting of all-real analytic functions $f\colon S^1
\longrightarrow {\mathbb C}^\times$ that extend to holomorphic
functions $f\colon D_0 \longrightarrow {\mathbb C}^\times$ in the
disc $D_0 = \{z\in {\mathbb C} : \vert z\vert \leq 1\}$ satisfying
$f(0) = 1$, and the subgroup $\Gamma_-$ of $\Gamma$ consisting of
functions $f$, which extend to non-vanishing holomorphic functions
in $D_{\infty} = \{z\in {\mathbb C} \cup \infty : \vert z\vert
\geq 1\}$ satisfying $f(\infty) = 1$.

    Let us recall that \cite{Si} uses Grothendiek definition of determinant for  an operator of trace class on a separable Hilbert space and study its properties. 

   In \cite{SW} G. Segal and G. Wilson offered the definition of a
holomorphic line bundle $\det$ over $\gr$, as the
bundle whose fiber over $W\in \gr (H)$ is an infinite expression
$\lambda\cdot \omega_0\wedge \omega_1 \wedge \omega_2 \wedge
\dots$ where $\{\omega_1\}$ is what they called an ``admissible
basis'' for $W$. If $\{\omega_i\}$ and $\{\omega'_i\}$ are two
admissible basis of $W$, then the infinite matrix $t$ relating
them is the kind that has a determinant, and it is possible to
assert that:
$$\lambda\cdot \omega_0\wedge \omega_1 \wedge \omega_2 \wedge \dots =
\lambda\cdot  \Det (t) \cdot \omega'_0\wedge \omega'_1 \wedge
\omega'_2 \wedge \dots$$\noindent when $\omega_i = \sum t_{ij}
\omega'_j$.

    Let us now consider the subgroup $\gl_{1}(H)$ of $\gl_{res}(H)$ consisting
of invertible operators $g = \begin{pmatrix} a & b \\ c & d
\end{pmatrix} \in \gl_{res}(H)$ and where the blocks $b$ and $c$ are
of trace class. If $\gl_{1}(H)^0$ is the identity component of
$\gl_{1}(H)$, then the action of $\gl_{1}(H)^0$ on $\gr$ does lift
projectively to $\det$; that is: there exists a central extension
of groups: \begin{equation*}  1 \to {\mathbb
C}^\times \longrightarrow \gl_1^{\wedge} \longrightarrow
\gl_{1}(H)^0 \to 1\, ,\end{equation*}\noindent which acts on
$\det$, covering the action of $\gl_{1}(H)^0$ on $\gr$.

    If we consider the open set $\gl_{1}^{reg}$ of $\gl_{1}(H)^0$
where $a$ is invertible, there exists a section $s\colon
\gl_{1}^{reg} \to \gl_1^{\wedge}$ of the projection
$\gl_1^{\wedge} \longrightarrow \gl_{1}(H)^0$ that induces a
2-cocycle $(\cdot, \cdot) \colon \gl_{1}^{reg} \times
\gl_{1}^{reg} \longrightarrow {\mathbb C}^\times$, defined as:
\begin{equation}\label{eq:s-g-cocy} (g_1,g_2) \longmapsto \Det (a_1 a_2 a_3^{-1})\, ,\end{equation}\noindent where
$g_i =
\begin{pmatrix} a_i & b_i \\ c_i & d_i \end{pmatrix}$ and $g_3 =
g_1 g_2$.

    Accordingly, the elements of  $\gl_{1}^{reg}$ act on $\det$ by means of the section
$s$. However, $\gl_{1}^{reg}$ is not a group, and the map $s$ is
not multiplicative.

    Let us now consider the subgroup  $\gl_{1}^{+}$ of
$\gl_{1}^{reg}$ consisting of elements whose block decomposition
has the form $\begin{pmatrix} a & b \\ 0 & d
\end{pmatrix}$. Thus, the restriction of $s$ to $\gl_{1}^{+}$ is an inclusion of
groups $\gl_{1}^{+} \hookrightarrow \gl_1^{\wedge}$ and one can
regard $\gl_{1}^{+}$ as a group of automorphisms of the bundle
$\det$. Similar remarks apply to the subgroup $\gl_{1}^{-}$
consisting of the elements of $\gl_{1}^{reg}$ whose block
decomposition has the form $\begin{pmatrix} a & 0 \\ c & d
\end{pmatrix}$. In particular, the subgroups $\Gamma_+$ and
$\Gamma_-$ of the group of maps $S^1 \to {\mathbb C}^\times$ act
on $\det$, for $\Gamma_{\pm} \subset \gl_1^{\pm}$.

    Now, for every subgroup $G,{\tilde G} \subset \gl_{1}^{reg}$ such that the action of $G_1$ and $G_2$
commute with each other it is possible to define a map
\begin{equation*} \label{eq:SW-pairing} \begin{aligned}(\cdot,\cdot)_{G,{\tilde G}}^{SW}\colon G
\times {\tilde G} &\longrightarrow {\mathbb C}^\times \\
(g,{\tilde g}) &\longmapsto \Det (a{\tilde a}a^{-1}{\tilde
a}^{-1})\, ,\end{aligned}\end{equation*}\noindent where $g =
\begin{pmatrix} a & b \\ c & d
\end{pmatrix}\in G$ and ${\tilde g} =
\begin{pmatrix} {\tilde a} & {\tilde b} \\ {\tilde c} & {\tilde d}
\end{pmatrix}\in {\tilde G}$. We have that the commutator has a determinant because, from the fact that
$g$ and $\tilde g$ commute, it is equal to $1 - b{\tilde
c}a^{-1}{\tilde a}^{-1} + {\tilde b}ca^{-1}{\tilde a}^{-1}$, and
b, c, ${\tilde b}$ and ${\tilde c}$ are of trace class by the
definition of $\gl_{1}^{reg}$. 

Hence this map is well-defined, and
we shall call it ``the Segal-Wilson pairing'' associated with
$G_1$ and $G_2$.

    Thus, if $g\in \Gamma_+$ and ${\tilde g}\in \Gamma_-$, with the above block decomposition,  a
computation shows that: 
\begin{equation}\label{e:SWpairing}
({\tilde g}, g)_{\Gamma_-,
\Gamma_+}^{SW}= \Det ({\tilde a} a {\tilde a}^{-1} a^{-1}) = \exp
(\text{trace} [\alpha, {\tilde \alpha}])\, ,
\end{equation}

\noindent where $g =
\exp (f)$, ${\tilde g} = \exp ({\tilde f})$, and $\alpha$ and
$\tilde \alpha$ are the $H_+ \to H_+$ blocks of $f$ and $\tilde f$
respectively. For details readers are referred to \cite{SW}.

\section*{Acknowledgements}
The authors want to thank the referee for valuable comments that have contributed to improve the paper.


\begin{thebibliography}{MM}


\bibitem{AST} Argerami, M.; Szechtman, F.; Tifenbach, R.
\textit{On Tate's trace}, Linear Multilinear Algebra
\textbf{55(6),} (2007) 515-520.


\bibitem{DS}  Dundford, N.; Schwartz, J. \textit{Linear Operators, Part II: Spectral Theory};
 Interscience (New York), 1963.

\bibitem{GGK} Gohberg, I.; Goldberg, S.; Krupnik, N.
\textit{Generalization of the determinants for trace-potent linear
operators}, Integr. Equ. Oper. Theory \textbf{40,} (2001) 441-453.


\bibitem{Gr} Grothendieck, A.,
\textit{La th{\'e}orie de Fredholm}, Bull. Soc. Math. France
\textbf{84,} S\'erie I (1956) 319-384.

\bibitem{LB} L\'opez-Pellicer, M.; Bru, R.,
\textit{Jordan Basis for an Infinite-Dimensional Space},
Portugaliae Math. \textbf{43(1),} (1985/86) 153-156.

\bibitem{Mc} Macdonald, I. G. \textit{Symmetric functions and Hall polynomials}; Second ed.
Oxford Mathematical Monographs, Oxford Science Publications, The
Clarendon Press, Oxford University Press (New York), 1995.




\bibitem{PR} Ramos Gonz\'alez, J.; Pablos Romo, F. \textit{A negative answer to the question of the linearity of Tate's Trace for the sum of two endomorphisms} To appear in Linear Multilinear Algebra (2013), http://dx.doi.org/10.1080/03081087.2013.784283.

\bibitem{RS} Reed, M.; Simon, B., \textit{Methods of Modern Mathematical Physics, Vol. IV:
Analysis of Operators}, Chap. XIII.17, Academic Press (New York),
1976.

\bibitem{SW} Segal, G.; Wilson, G., \textit{Loop Groups and equations of KdV type},
I.H.E.S., Publications Mathematiques \textbf{61,} (1985) 5-64.


\bibitem{Si} Simon, B., \textit{Notes on Infinite Determinants of Hilbert Space Operators},
Adv. Math. \textbf{24,} (1977) 244-273.

\bibitem{Ta} Tate, J., \textit{
Residues of Differentials on Curves}, Ann. Scient. \'Ec. Norm.
Sup., 4a s\'erie \textbf{1, } (1968) 149-159.


\end{thebibliography}
\end{document}